\renewcommand{\sslash}{\mathord{/\mkern-6mu/}}
\newtheorem{theorem}{Theorem}[subsection]
\newtheorem{lemma}[theorem]{Lemma}
\newtheorem{proposition}[theorem]{Proposition}
\newtheorem{definition}[theorem]{Definition}
\newtheorem{corollary}[theorem]{Corollary}
\newtheorem{remark}[theorem]{Remark}
\theoremstyle{definition}
\newtheorem{example}[theorem]{Example}
\newcommand{\CC}{\mathbb{C}}
\newcommand{\QQ}{\mathbb{Q}}
\newcommand{\NN}{\mathbb{N}}
\newcommand{\ZZ}{\mathbb{Z}}
\renewcommand{\AA}{\mathbb{A}}
\newcommand{\degen}{\mathrm{degen}}
\newcommand{\bc}{\mathbf{c}}
\newcommand{\sW}{\mathscr{W}}
\newcommand{\Gm}{\mathbb{G}_m}
\newcommand{\rk}{\mathrm{rank}}
\renewcommand{\sslash}{\mathord{/\mkern-6mu/}}
\newcommand{\bd}{\mathbf{d}}
\newcommand{\supp}{\mathrm{supp}}
\newcommand{\sst}{\textrm{ss}}
\newcommand{\sD}{\mathscr{D}}
\newcommand{\cW}{\mathscr{W}}
\DeclareMathOperator{\Hom}{Hom}
\DeclareMathOperator{\Cone}{Cone}
\newcommand{\Char}{\chi}
\newenvironment{Comment}[2]{\noindent\color{#1}{\texttt #2: }}{}
\begin{document}

\title{Weyl-invariant subspaces are (usually) not generic}
\author{Riku Kurama, Ruoxi Li, Henry Talbott, and Rachel Webb}

\begin{abstract}
 Let $V$ be a linear representation of a connected complex reductive group $G$. Given a choice of character $\theta$ of $G$, Geometric Invariant Theory defines a locus $V^{ss}_\theta(G) \subseteq V$ of semistable points. We give necessary, sufficient, and in some cases equivalent conditions for the existence of $\theta$ such that a maximal torus $T$ of $G$ acts on $V^{ss}_\theta(T)$ with finite stabilizers. In such cases, the stack quotient $[V^{ss}_\theta(G)/G]$ is is known to be Deligne-Mumford. Our proof uses the combinatorial structure of the weights of irreducible representations of semisimple groups. As an application we generalize the Grassmannian flop example of Donovan-Segal.
\end{abstract}

\maketitle




\section{Introduction}
\subsection{Weyl-genericity}
Let $V$ be a linear representation of a complex reductive group $G$.
A choice of character $\theta: G \to \Gm$ determines a $G$-invariant semistable locus $V^{ss}_\theta(G) \subseteq V$.
Varieties or Deligne-Mumford stacks that can be written as $[V^{ss}_\theta(G)/G]$ have furnished important examples in geometry and physics: for example, toric varieties and type-A flag varieties can be written this way. Some computations have been performed for general input data $(V, G, \theta)$, but always under a genericity assumption on $\theta$ that ensures the quotient stack $[V^{ss}_\theta(G)/G]$ has finite stabilizer groups. The purpose of this paper is to investigate when these genericity assumptions hold. In other words, we ask: How generic is genericity?

The specific genericity condition we consider is as follows. Let $T \subseteq G$ be a maximal torus with character lattice $\Char(T)$. If we view $V$ as a $T$-representation, there is a locus $\Sigma(V, T) \subseteq \Char(T)_\QQ$ where $V^{ss}_\theta(T)$ is not empty, and a locus $\omega(V, T) \subseteq \Sigma(V, T)$ where $V^{ss}_\theta(T)$ has a point with positive dimensional stabilizer. The Weyl group of $T$ in $G$ acts on $\Char(T)_\QQ$ and we make the following definition.

\begin{definition}
The representation $(V, G)$ is \emph{Weyl-generic} if the Weyl-invariant subspace $\Sigma(V, T)^W$ is not contained in $\omega(V, T).$
\end{definition}
Our main results contain (in different contexts) necessary conditions, sufficient conditions, and equivalent conditions for Weyl-genericity to hold. They include but are not limited to the following.
\begin{itemize}
\item Irreducible Weyl-generic representations are completely classified: roughly speaking, they are built from standard representations of $SL(n)$ and their duals (Corollary \ref{cor:tensor}).
\item In general, if $(V, G)$ is Weyl-generic, then $G$ has type $A$ (Theorem \ref{thm:necessary}).
\end{itemize}
These results suggest that Weyl-genericity is rather special!
\subsection{Representations of semisimple groups}
If $V$ is a representation of a semisimple group $G$, then the character lattice $\Char(G)$ is zero-dimensional and $V$ is never Weyl-generic. However, every complex reductive group $G$ has a finite cover by a group $H \times D$ where $H$ is semisimple and $D$ is a torus, and the structure of $V$ as an $H$ representation is closely tied to whether $V$ is Weyl-generic. The important notion turns out to be the \textit{degeneracy} of $V$ as an $H$-representation, a notion that we define as follows. Let $T \subseteq H$ be a maximal torus.

\begin{definition}[Definition \ref{def:degeni} and Lemma \ref{lem:degen}]
The \emph{degeneracy} of $V$ is the minimum dimension of a cone generated by weights of $V$ that is a subspace of $\Char(T)_\QQ$.
\end{definition}

The degeneracy of $V$ is an integral invariant clearly bounded by the rank of $H$. We say $V$ is \textit{nondegenerate} if equality holds. We classify the nondegenerate representations of the semisimple groups $A_n, B_n, C_n, D_n, E_6, E_7, E_8, F_4$, and $G_2$ in Section 4. Surprisingly, we find that the only such representations are the standard representations of $A_n = SL(n+1)$ and their duals, and the representations $Sym^{2\ell +1}(\CC^2)$ of $SL(2)$. This fact is the fundamental reason for the scarcity of Weyl-generic representations.
We classify the nondegenerate representations of all semisimple groups in Proposition \ref{prop:pieces}.

\subsection{Applications}
The main application of our results is to give examples where theorems in the literature can be applied. For instance, in \cite{HL-Sam} the authors produce derived equivalences from quasi-symmetric representations $(V, G)$. A hypothesis of their main theorem \cite[Thm 1.2]{HL-Sam} is that $G$ acts on $V$ with finite kernel and that the representation is Weyl-generic. 
Similarly, the quasimap theory developed in \cite{toric-qmaps}, \cite{stable-qmaps}, and \cite{orb-qmaps} is a generalization of Gromov-Witten theory that defines numerical invariants for stacks of the form $[V^{ss}_\theta(G)/G]$---provided the stack is Deligne-Mumford. 

It is known that the quotient $[V^{ss}_\theta(G)/G]$ will be Deligne-Mumford for generic choices of $\theta$ if $(V, G)$ is Weyl-generic. Conversely, our Examples \ref{ex:grassmannian} and \ref{ex:quiver1} show that the hypothesis that $[V^{ss}_\theta(G)/G]$ is Deligne-Mumford for some $\theta$ is a priori a little stronger than the Weyl-genericity assumption (see Corollary \ref{cor:oplus}). It turns out that this failure of the converse can be avoided if the dimension of the unstable locus is sufficiently small, relative to the dimensions of both $G$ and $V$, and we prove the following.

\begin{proposition}[Corollary \ref{cor:oplus}]\label{P:intro}
If $(V, G)$ is a Weyl-generic representation, then for $r > \dim G$ there exists $\theta \in \Char(G)$ for which the stack quotient $[(V^{\oplus r})^{ss}_\theta(G)/G]$ is a nonempty Deligne-Mumford stack.
\end{proposition}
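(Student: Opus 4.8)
The plan is to fix a single character $\theta$ that certifies Weyl-genericity, to observe that replacing $V$ by $V^{\oplus r}$ changes none of the torus-level combinatorics controlling Deligne--Mumfordness, and then to run a dimension count showing that the extra copies force the semistable locus to be nonempty once $r>\dim G$. First I would extract $\theta$ from the hypothesis: by definition of Weyl-genericity there is a $W$-invariant element $\theta_0\in\Sigma(V,T)\setminus\omega(V,T)$, and since $\Sigma(V,T)$ and $\omega(V,T)$ are cones and $\bigl(\Char(T)_\QQ\bigr)^W$ is rationally identified with $\Char(G)_\QQ$, after rescaling I may take $\theta\in\Char(G)$ with $\theta\in\Sigma(V,T)\setminus\omega(V,T)$; note $\theta\ne0$, since the origin of $V$ is fixed by all of $T$ and hence $0\in\omega(V,T)$. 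This $\theta$ is the character I will use for every $r>\dim G$.

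Next I would show that $[(V^{\oplus r})^{ss}_\theta(G)/G]$ is Deligne--Mumford for \emph{every} $r\ge1$, with no size hypothesis. The key elementary point is that $\Sigma(V^{\oplus r},T)=\Sigma(V,T)$ and $\omega(V^{\oplus r},T)=\omega(V,T)$: the set of $T$-weights of $V^{\oplus r}$ equals the set of $T$-weights of $V$, and every subset of that weight set already occurs as the set of weights of a single vector of $V$ (take a nonzero vector in each relevant weight space and add them), so the combinatorial data entering the definitions of $\Sigma$ and $\omega$ are literally the same for $V$ and for $V^{\oplus r}$. Hence $\theta\notin\omega(V^{\oplus r},T)$, i.e. $T$ acts on $(V^{\oplus r})^{ss}_\theta(T)$ with finite stabilizers, and by the fact recalled in the introduction this forces the stack quotient $[(V^{\oplus r})^{ss}_\theta(G)/G]$ to be Deligne--Mumford.

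It then remains to see that $(V^{\oplus r})^{ss}_\theta(G)\ne\emptyset$ when $r>\dim G$, which I would do by bounding the dimension of the $G$-unstable locus. By the Hilbert--Mumford criterion, $v\in V^{\oplus r}$ fails to be $\theta$-semistable for $G$ precisely when some one-parameter subgroup $\lambda$ of $G$ has $\lim_{t\to0}\lambda(t)\cdot v$ defined and $\langle\theta,\lambda\rangle<0$; writing $\lambda=g\lambda'g^{-1}$ with $\lambda'$ a cocharacter of $T$, and using that $\theta$ is a character of $G$ (so $\langle\theta,\lambda\rangle=\langle\theta,\lambda'\rangle$), the unstable locus becomes
\[
V^{\oplus r}\setminus(V^{\oplus r})^{ss}_\theta(G)=\bigcup_{g\in G}\ \bigcup_{\lambda'\,:\,\langle\theta,\lambda'\rangle<0} g\cdot\bigl(V^{\lambda'\ge0}\bigr)^{\oplus r},
\]
where $V^{\lambda'\ge0}\subseteq V$ is the sum of the weight spaces on which $\lambda'$ acts with nonnegative weight and $g$ acts diagonally on $V^{\oplus r}$. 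Only finitely many subspaces $V^{\lambda'\ge0}$ occur, and the crucial point is that each is a \emph{proper} subspace of $V$: if $V^{\lambda'\ge0}=V$ then every weight of $V$ pairs nonnegatively with $\lambda'$, and since $\theta\in\Sigma(V,T)$ lies in the cone generated by the weights of $V$, this would force $\langle\theta,\lambda'\rangle\ge0$. So each term $g\cdot(V^{\lambda'\ge0})^{\oplus r}$ lies in the image of the diagonal action map $G\times W^{\oplus r}\to V^{\oplus r}$ for a subspace $W$ with $\dim W\le\dim V-1$, and the unstable locus has dimension at most $\dim G+r(\dim V-1)$, which is strictly less than $r\dim V=\dim V^{\oplus r}$ as soon as $r>\dim G$. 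Hence the unstable locus is a proper closed subvariety and $(V^{\oplus r})^{ss}_\theta(G)$ is nonempty. Combining the two halves gives the statement.

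The conceptual heart is the realization that no genericity for $G$ beyond the torus-level condition is needed, which the identity $\omega(V^{\oplus r},T)=\omega(V,T)$ makes precise, and that the only role of taking many copies is to shrink the unstable locus. The step I expect to need the most care is the dimension count for nonemptiness: correctly reducing the Hilbert--Mumford analysis over $G$ to cocharacters of $T$, and verifying that the attracting subspaces $V^{\lambda'\ge0}$ attached to \emph{destabilizing} cocharacters are proper --- this is exactly where both $\theta\in\Sigma(V,T)$ and the hypothesis $r>\dim G$ enter.
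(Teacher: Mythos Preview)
Your proof is correct and follows essentially the same route as the paper. The paper packages the two halves as Proposition~\ref{prop:oplus} (the dimension count giving $\Sigma(V^{\oplus r},G)=\Sigma(V^{\oplus r},T)^W$ for $r>\dim G$) and Lemma~\ref{lem:abelianization} (the containment $\omega(V^{\oplus r},G)\subseteq\omega(V^{\oplus r},T)^W$), then notes that Weyl-genericity of $V$ together with $\Sigma(V^{\oplus r},T)=\Sigma(V,T)$ and $\omega(V^{\oplus r},T)=\omega(V,T)$ produces the desired $\theta$; your argument unpacks exactly these ingredients, with the same dimension estimate $\dim G + r(\dim V-1)<r\dim V$ for the unstable locus and the same reduction to torus stabilizers for Deligne--Mumfordness.
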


Readers interested in constructing Deligne-Mumford stacks will then ask for a list of Weyl-generic representations. Our most general sufficient condition is the following.
\begin{theorem}[Theorem {\ref{thm:sufficient}}]\label{T:intro}
Let $G=H\times D$ be a product of a semisimple group and a torus. Let $X_+$ be a nondegenerate representation of $H$, let $Y$ be a representation of $D$, and let $Z$ be a representation of $G$. Assume that
\begin{itemize}
\item The cone of weights of $Y$ has full dimension.
\item There is a vector $\nu \in \chi(D)_\QQ^\vee$ such that 
\[
\langle \nu, \alpha^+ \rangle \geq 0 \quad \quad \quad \quad \langle \nu, \alpha^-\rangle < 0
\]
for all $D$-weights $\alpha^+$ of $Y$ and $\alpha^-$ of $Z$.
\end{itemize}
  For any integer $t>0$, let $Z^{(t)}$ denote the representation of $G$ obtained by scaling $D$-weights of $Z$ by $t$. 
    Then, for all $t\gg0$, the representation $(X_+\otimes Y) \oplus Z^{(t)} $ is Weyl-generic.
\end{theorem}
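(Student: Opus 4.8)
The plan is to unwind the definition of Weyl-genericity and exhibit an explicit Weyl-invariant character $\theta$ in $\Sigma((X_+\otimes Y)\oplus Z^{(t)}, T)^W$ that is not in $\omega$, for $t$ large. Write $G = H\times D$ with maximal torus $T = T_H\times D$, so $\Char(T)_\QQ = \Char(T_H)_\QQ \oplus \Char(D)_\QQ$ and the Weyl group $W = W(H)$ acts only on the first factor. A character of $T$ is Weyl-invariant precisely when its $T_H$-component is zero, so we are looking for $\theta = (0, \eta) \in \Char(T_H)_\QQ\oplus \Char(D)_\QQ$ such that $V^{ss}_\theta(T)$ is nonempty with finite stabilizers, where $V = (X_+\otimes Y)\oplus Z^{(t)}$. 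By the Hilbert--Mumford / King criterion, nonemptiness of $V^{ss}_\theta(T)$ means $\theta$ lies in the cone $\Sigma(V,T)$ generated by the $T$-weights of $V$, and finiteness of stabilizers on $V^{ss}_\theta(T)$ means $\theta$ does not lie in $\omega(V,T)$, i.e.\ $\theta$ lies in the relative interior of a full-dimensional (rank $T$) subcone of $\Sigma(V,T)$ spanned by weights of $V$; equivalently $\theta$ is \emph{not} on the boundary of any cone spanned by a subset of weights whose span is a proper subspace. This is exactly where the hypothesis that $X_+$ is nondegenerate will be used: the weights of $X_+$ span a cone that is a subspace of $\Char(T_H)_\QQ$ of the minimal possible dimension $\rk H$, i.e.\ they positively span all of $\Char(T_H)_\QQ$.

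The key steps, in order, are as follows. First, I would analyze the $T$-weights of $X_+\otimes Y$: they are sums $\mu + \alpha^+$ where $\mu$ runs over $T_H$-weights of $X_+$ and $\alpha^+$ over $D$-weights of $Y$. Using nondegeneracy of $X_+$, show that the projection to $\Char(T_H)_\QQ$ of the cone on these weights is all of $\Char(T_H)_\QQ$, and using the full-dimensionality of the weight cone of $Y$, show that for a fixed $\mu$ the cone on $\{\mu + \alpha^+\}$ projects onto a full-dimensional cone in $\Char(D)_\QQ$. The upshot should be that the cone $\Sigma(X_+\otimes Y, T)$ has full dimension $\rk H + \dim D = \rk G$ and that $(0,\eta)$ lies in its interior for a suitable $\eta$ in the interior of the weight cone of $Y$ — here the subspace property from nondegeneracy is what kills the $T_H$-component and lets $\theta$ be Weyl-invariant while still interior. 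Second, I would use the separating vector $\nu$: scaling the $D$-weights of $Z$ by $t$ moves the weights $\beta + t\alpha^-$ of $Z^{(t)}$ (with $\beta$ a $T_H$-weight of $Z$ and $\alpha^-$ a $D$-weight of $Z$) so that, paired against $(0,\nu)$, they are very negative (of order $t$), while the weights of $X_+\otimes Y$ pair nonnegatively with $(0,\nu)$. For $t\gg 0$ this forces any cone spanned by weights of $V$ that contains $\theta$ in a low-dimensional face, or any subspace-spanning subset of weights passing near $\theta$, to avoid the $Z^{(t)}$ weights entirely — so the local cone structure of $\Sigma(V,T)$ near $\theta$ is governed purely by $X_+\otimes Y$, where we have already arranged $\theta$ to be interior to a full-dimensional cone. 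Concluding, $\theta\in\Sigma(V,T)^W\setminus\omega(V,T)$, so $V$ is Weyl-generic.

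To make the second step precise I would argue by a compactness/finiteness argument: there are finitely many subsets $S$ of the weights of $V$ whose $\QQ$-span is a proper subspace of $\Char(T)_\QQ$; for each such $S$, either $S$ already lies in the weights of $X_+\otimes Y$ (and then the cone on $S$ misses our chosen interior point $\theta$, independently of $t$), or $S$ contains at least one weight of $Z^{(t)}$, in which case I claim that for $t$ large the cone on $S$ does not meet a fixed small neighborhood of $\theta$ in the hyperplane $\{x : \langle (0,\nu), x\rangle = 0\}$ — because the $Z^{(t)}$-weights in $S$ have $\nu$-pairing $\to -\infty$ while the others have $\nu$-pairing bounded, so representing a point with small $\nu$-pairing as a nonnegative combination forces the $Z^{(t)}$-coefficients to zero in the limit, contradicting that $S$'s span is used. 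Taking $t$ past the finitely many thresholds produced this way gives the result. The main obstacle I anticipate is precisely this uniformity-in-$t$ bookkeeping — making sure the finitely many "bad" subspaces are all pushed away from $\theta$ simultaneously, and that nonemptiness (i.e.\ $\theta$ genuinely in $\Sigma(V,T)$, not merely in the $X_+\otimes Y$ part of it) is preserved — together with carefully invoking the classification of nondegenerate $X_+$ only to the extent of the positive-spanning property, rather than its full strength.
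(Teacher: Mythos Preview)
Your overall strategy---fix a single Weyl-invariant $\theta = (0,\eta)$ and argue it avoids every wall once $t$ is large---cannot succeed as stated, and the paper's Example~\ref{ex:walls} is a direct counterexample to your second step. There one fixes an arbitrary $\alpha \in \Sigma(X_+\otimes Y, T\times D)^W$ and then builds a $Z$ (satisfying all hypotheses of the theorem) together with a wall $\cW$ spanned by four $X_+\otimes Y$-weights and two $Z^{(t)}$-weights such that $(0,\alpha) \in \cW$ for \emph{every} $t>0$. Concretely, the $Z^{(t)}$-coefficients in the cone expression are $1/t$: they tend to zero, exactly as your $\nu$-heuristic predicts, but for each finite $t$ they are strictly positive, so $(0,\alpha)$ remains in the wall. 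Your inference ``the $Z^{(t)}$-coefficients go to zero in the limit, contradicting that $S$'s span is used'' yields no contradiction at finite $t$; the limit merely lands in the closed subcone on $S\cap\{X_+\otimes Y\text{-weights}\}$, and nothing stops $(0,\alpha)$ from sitting on that subcone's boundary inside $\cW$ for all $t$. The obstacle you flagged as ``uniformity-in-$t$ bookkeeping'' is a genuine obstruction, not a bookkeeping issue.

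The paper's proof differs from yours in two essential ways. First, it never fixes a single $\theta$: it shows the entire $k$-dimensional cone $0\times A^+$ (with $A^+$ the cone of $D$-weights of $Y$) is not contained in $\omega$. Second, it splits the walls by a different dichotomy---not by whether $\cW$ contains a $Z$-weight, but by whether the projection of $\cW$ to $\Char(T_H)_\QQ$ has full dimension $n=\rk H$. If it does, then $\cW\cap(0\times\Char(D)_\QQ)$ has dimension at most $k-1$ and so cannot contain $0\times A^+$; this is what disposes of the walls in Example~\ref{ex:walls}, with no reference to $\nu$ or $t$ at all. If the projection has dimension at most $n-1$, nondegeneracy of $X_+$ forces every nonzero coefficient vector $(c_i)$ with $\sum c_i\xi_i = 0$ and $c_i\ge 0$ to have some $Z$-coordinate nonzero---otherwise $X_+$ would exhibit a degeneracy of dimension at most $n-1$. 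Only after securing this ``key observation'' does the $\nu$-pairing argument apply: each ray generator of $\cW\cap(0\times\Char(D)_\QQ)$ carries a fixed positive $Z$-coefficient, so its $\nu$-value is dominated by the strictly negative $Z$-term for $t$ large, pushing the whole wall off $0\times A^+$. Your proposal invokes nondegeneracy only to position $\theta$ and to handle the pure $X_+\otimes Y$ walls; the decisive second use---inside mixed walls of low $T_H$-projection---is what is missing.
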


Combined with Proposition \ref{P:intro}, this gives many examples of Deligne-Mumford stacks that can be represented as GIT quotients, explicit up to the calculation of an appropriate $t$ appearing in Theorem \ref{T:intro}. For a given $(V, G)$, a lower bound for $t$ such that $(X_+ \otimes Y) \oplus Z^{(t)}$ can in theory be computed explicitly, as we do in Example \ref{ex:sufficient}. As a more explicit result we prove the following, generalizing the Grassmannian flop construction of \cite{DS14}.
\begin{proposition}[Example \ref{ex:gr-flop}]
Let $H = \prod_{i=1}^M SL(n_i)$ and let $X$ be a nondegenerate representation of $H$. Let $\CC_a$ denote the 1-dimensional representation of $\Gm$ of weight $a$. Then the self dual representation
\[
V := (X \otimes \CC_a) \oplus (X^\vee \otimes \CC_{-a})
\]
is Weyl generic.
\end{proposition}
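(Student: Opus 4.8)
The plan is to deduce this from Theorem~\ref{T:intro} by identifying the correct roles for $X_+$, $Y$, and $Z$. Take $H = \prod_{i=1}^M SL(n_i)$ as given, and take the torus to be $D = \Gm$. For the nondegenerate $H$-representation $X$, its dual $X^\vee$ is also nondegenerate (duals of nondegenerate representations are nondegenerate, since negating all weights preserves the property of a spanning set of weights generating a cone that is a subspace — this should be recorded or is immediate from the classification in Section~4, where the list is closed under duality). So I would like to set $X_+ = X$, but the representation $V = (X \otimes \CC_a) \oplus (X^\vee \otimes \CC_{-a})$ has \emph{two} ``$X$-type'' summands, whereas Theorem~\ref{T:intro} only builds in one via $X_+ \otimes Y$. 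The resolution is to absorb the second summand into $Z$: set
\[
X_+ = X, \qquad Y = \CC_a, \qquad Z = X^\vee \otimes \CC_{-a}.
\]

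Next I would check the two hypotheses of Theorem~\ref{T:intro} with $D = \Gm$. The cone of $D$-weights of $Y = \CC_a$ is the single ray $\RR_{\geq 0} \cdot a$ (or $\RR_{\leq 0}\cdot a$), which is full-dimensional in $\chi(D)_\QQ \cong \QQ$ as long as $a \neq 0$; I should note the $a = 0$ case separately (there $V = X \oplus X^\vee$ has no $\Gm$-action to speak of, but $H$ acts and the relevant statement would be vacuous or handled directly — more likely the proposition implicitly assumes $a \neq 0$, and I would state that). For the second hypothesis: the only $D$-weight of $Y$ is $a$, and the only $D$-weight of $Z = X^\vee \otimes \CC_{-a}$ is $-a$. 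Choosing $\nu \in \chi(D)_\QQ^\vee \cong \QQ$ with the same sign as $a$ gives $\langle \nu, a\rangle > 0 \geq 0$ and $\langle \nu, -a \rangle < 0$, as required. (If $a<0$, take $\nu<0$; either way a valid $\nu$ exists.)

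Then Theorem~\ref{T:intro} gives, for all $t \gg 0$, that $(X \otimes \CC_a) \oplus (X^\vee \otimes \CC_{-a})^{(t)} = (X \otimes \CC_a) \oplus (X^\vee \otimes \CC_{-ta})$ is Weyl-generic. The remaining — and genuinely content-bearing — step is to remove the scaling factor $t$, i.e.\ to show the \emph{unscaled} representation $V = (X \otimes \CC_a) \oplus (X^\vee \otimes \CC_{-a})$ is already Weyl-generic, not merely some rescaling of the $Z$-part. I expect this to be the main obstacle. The point is that $V$ is \emph{self-dual}, which should force a much stronger symmetry on the walls defining $\Sigma(V,T)$ and $\omega(V,T)$ than the generic situation of Theorem~\ref{T:intro}, and in particular should make the ``$t$-scaling'' unnecessary. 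Concretely, I would aim to exhibit an explicit point of $\Sigma(V,T)^W$ not in $\omega(V,T)$: the Weyl-invariant part of $\chi(G)_\QQ$ here is spanned by the $\Gm$-direction (since $\chi(H)^W = 0$), so $\Sigma(V,T)^W$ is a half-line or line in the $\Gm$-character direction, and I must check that a generic $\theta$ on this line has $T$ acting with finite stabilizers on $V^{ss}_\theta(T)$. Using the self-duality, the relevant rank computation for the $T$-stabilizer reduces to showing that the weights of $X$ (together with the $\Gm$-weight) span $\chi(T \times \Gm)_\QQ$ — which is exactly the nondegeneracy of $X$ as an $H$-representation, boosted by the extra $\Gm$-coordinate supplied by the nonzero $a$. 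So the argument should close by combining nondegeneracy of $X$ with the self-dual structure, bypassing Theorem~\ref{T:intro}'s asymptotic-$t$ mechanism; alternatively, if that direct route is cumbersome, I would check that in this self-dual setting the proof of Theorem~\ref{T:intro} in fact works verbatim with $t = 1$, and cite that.
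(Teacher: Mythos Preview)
Your setup via Theorem~\ref{T:intro} is fine and your diagnosis is correct: the whole content lies in removing the scaling factor $t$. But your proposed mechanism for doing so does not work, and the paper proceeds quite differently.

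Concretely, with $T\subseteq H$ maximal and $n=\rk H$, the rank of $T\times\Gm$ is $n+1$, so by Lemma~\ref{lem:toric} a wall is generated by $n$ weights of $V$. The weights of $V$ are exactly $(\xi,a)$ and $(-\xi,-a)$ for $\xi\in\Xi(X)$. Writing the $\Gm$-projection character $(0,1)$ as a nonnegative combination of $n$ such weights amounts, after absorbing signs into the coefficients, to finding $\xi_{K_1},\ldots,\xi_{K_n}\in\Xi(X)$ and rationals $c_1,\ldots,c_n$ (of \emph{either} sign) with
\[
\sum_{i=1}^n c_i\,\xi_{K_i}=0\quad\text{and}\quad\sum_{i=1}^n c_i\neq 0.
\]
So what you actually need is: for any $n$ weights of $X$, every \emph{signed} linear relation among them has coefficient-sum zero. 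This is strictly stronger than nondegeneracy of $X$, which only rules out nontrivial \emph{nonnegative} relations among $\leq n$ weights. Your sentence ``which is exactly the nondegeneracy of $X$ \ldots\ boosted by the extra $\Gm$-coordinate'' conflates the two and would not close the argument; nor does it reduce to ``the weights span $\chi(T\times\Gm)_\QQ$,'' which is a statement about the full weight set rather than arbitrary $n$-element subsets.

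The paper does not go through Theorem~\ref{thm:sufficient} at all. It proves the signed statement above directly, by revisiting the array argument from the backward direction of Proposition~\ref{prop:pieces} (Lemmas~\ref{lem:1} and~\ref{lem:2}) with the coefficients $c_K$ allowed to be arbitrary integers. If the axis-constants $a_i$ are all nonzero, those lemmas run verbatim and give $\geq n+1$ nonzero entries, a contradiction; if some $a_i=0$, then summing all entries gives $\sum c_K = a_i n_i = 0$, which is exactly the desired conclusion. Your alternative of ``check that the proof of Theorem~\ref{thm:sufficient} works with $t=1$'' also does not go through as stated: in the self-dual situation the ray expression~\eqref{eq:newray} evaluated at $t=1$ need not pair negatively with $\nu$, so the push-off fails without further input---and the further input needed is precisely the signed-relation fact above.
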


We remark that a large family of Weyl-generic representations of nonabelian groups is already known: these are certain representations arising from moduli of quiver representations. In fact, a quiver-theoretic criterion for existence of $\theta$ such that $[V^{ss}_\theta(G)/G]$ is Deligne-Mumford is give in \cite{FPW}, along with an effective algorithm for computing such $\theta$ when they exist. However, these quiver representations almost never satsify the sufficient conditions of Theorem \ref{T:intro} (see Example \ref{ex:quiver}). Thus our examples provided here are mostly disjoint from those in the literature.




\subsection{Acknowledgements}
This project began as part of the 2022 Michigan Research Experience for Graduates (MREG) and the authors are grateful to the organizers of that event.

Kurama was partially supported by Murata overseas scholarship and NSF grant DMS-2052750. Li was partially funded by the Simons Collaboration on Perfection in Algebra, Geometry, and Topology.
Talbott was supported by the National Science Foundation Graduate Research Fellowship under Grant No. DGE 2241144.

Webb was supported by the NSF grant DMS 2501528 and an NSF Postdoctoral Research Fellowship, award number 200213, as well as a grant from the Simons Foundation. Webb is also grateful to the Isaac Newton Institute for Mathematical Sciences, Cambridge, for support and hospitality during the program ``New Equivariant Methods in Algebraic and Differential Geometry,'' where ideas in this paper developed significantly.

The authors thank Kimoi Kemboi, David Speyer, Richard Thomas, and Daniel Halpern-Leistner for helpful discussions.

\section{Background}
\subsection{Reductive groups and their representations}

We review some structure theorems for reductive groups and their representations. The results in this section are not new. Throughout, $G$ is a connected complex reductive group.
\begin{theorem}\label{thm:structure}
Let $G$ be a connected complex reductive group. Then there is a surjective homomorphism with finite central kernel
\[
f: H_1 \times \ldots \times H_M \times D \to G
\]
where each $H_i$ is simply connected and almost-simple 
and $D$ is the maximal central torus of $G$. Moreover the preimage of a maximal torus $K$ of $G$ is $T \times D$ where $T$ is a maximal torus of $\prod_i H_i=:H$, the Weyl group of $H \times D$ is isomorphic to the Weyl group of $G$, and $f$ induces a Weyl-equivariant isomorphism
\[
\Char(K)_\QQ \to \Char(T \times D)_\QQ.
\]
\end{theorem}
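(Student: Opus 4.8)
The plan is to construct $f$ by composing two standard isogenies and then to choose the maximal torus $K$ of $G$ \emph{compatibly with $f$}, after which the statements about tori, Weyl groups, and character lattices become nearly formal. Let $G' := [G,G]$, which is connected semisimple, and let $D := Z(G)^\circ$ be the maximal central torus. By the standard structure theory of reductive groups, $G = G' \cdot D$ with $G' \cap D$ finite, so multiplication defines a surjection $G' \times D \to G$ with finite kernel; this kernel is contained in $Z(G') \times D$ because $G' \cap D$ is a finite normal, hence central, subgroup of $G'$, so it is central in $G' \times D$. Next let $H := H_1 \times \cdots \times H_M \to G'$ be the simply connected cover, where the $H_i$ are the simply connected almost-simple groups attached to the irreducible components of the root system of $G'$; this is an isogeny with central kernel. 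Composing gives the desired $f \colon H \times D \to G$, surjective with finite central kernel.

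Fix a maximal torus $T = \prod_i T_i$ of $H$. Since $\ker f$ is central it lies in $Z(H) \times D \subseteq T \times D$, so $f(T \times D)$ is a torus of dimension $\rk H + \dim D = \rk G$, hence a maximal torus of $G$, and moreover $f^{-1}\bigl(f(T \times D)\bigr) = (T \times D)\cdot \ker f = T \times D$. Set $K := f(T \times D)$; since all maximal tori of $G$ are conjugate it suffices to prove the remaining assertions for this particular $K$, and by construction $f^{-1}(K) = T \times D$.

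Because $D$ is central, $N_{H \times D}(T \times D) = N_H(T) \times D$, so the Weyl group of $H \times D$ is $N_H(T)/T = \prod_i W(H_i, T_i)$. The homomorphism $f$ carries $N_{H\times D}(T\times D)$ into $N_G(K)$ and hence induces a homomorphism on Weyl groups; it is injective because $f(n) \in K$ forces $n \in f^{-1}(K) = T \times D$, and surjective because any $m \in N_G(K)$ lifts through the surjection $f$ to an element $n$ for which $n(T\times D)n^{-1}$ is a connected subgroup of $f^{-1}(K) = T \times D$ of full dimension, hence equal to $T \times D$, so $n \in N_{H\times D}(T\times D)$. Thus $f$ identifies the Weyl group of $H \times D$ with that of $G$. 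Finally $f$ restricts to an isogeny $T \times D \to K$, so the pullback $f^\ast \colon \Char(K) \to \Char(T \times D)$ is injective with finite cokernel and therefore an isomorphism after $\otimes\,\QQ$; since the Weyl actions on both sides are induced by conjugation by the torus normalizers and $f$ is equivariant for conjugation, $f^\ast$ is automatically Weyl-equivariant under the identification just established.

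I do not expect a serious obstacle here: everything rests on standard facts (the derived subgroup and connected center, the simply connected cover, and the product decomposition of a simply connected semisimple group into almost-simple factors), and the only point that genuinely needs care is the definition $K := f(T \times D)$, which is precisely what makes the Weyl-group isomorphism and the Weyl-equivariance of $f^\ast$ drop out with no extra work; the residual effort is just checking the finiteness and centrality of the various kernels.
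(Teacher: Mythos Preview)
Your proof is correct and follows essentially the same route as the paper: compose the central isogeny $\mathscr{D}(G)\times D\to G$ with the simply connected cover $H\to\mathscr{D}(G)$, then take $K:=f(T\times D)$ and read off the statements about tori, Weyl groups, and characters. The only difference is that you spell out the Weyl-group isomorphism directly (injectivity from $f^{-1}(K)=T\times D$, surjectivity by lifting normalizing elements), whereas the paper simply cites \cite[Prop~21.4B]{Humphreys} for this step.
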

\begin{proof}
If $\sD(G)$ is the derived subgroup and $D$ is a maximal central torus of $G$, there is a central isogeny $\sD(G) \times D \to G$ (see e.g. \cite[Thm 3.2.2]{Conrad} 
). Since the derived subgroup $\sD(G)$ is semisimple, there is a product of simply connected almost-simple groups $H := \prod_{i=1}^M H_i$ and a central isogeny $\prod_{i=1}^M H_i \to \sD(G)$ \cite[17.27]{Milne}. The map $f$ is obtained from the composition of these two central isogenies.

Let $T$ denote a maximal torus of $H$ and let $K = f(H \times T)$. Then $T \times D$ is a maximal torus of $H \times D$ and $K$ is a maximal torus of $G$ (see e.g. \cite[Cor 21.3C]{Humphreys}). Since
\[
\ker(T \times D \to K) = \ker(f)
\]
is finite, the cokernel of the dual homomorphism
\begin{equation}\label{eq:chars}
\Char(K) \to \Char(T \times D)
\end{equation}
is also finite and hence \eqref{eq:chars} becomes an isomorphism after tensoring with $\QQ$.

Let $W(K, G)$ (resp. $W(T \times D, H \times D)$) denote the Weyl group of $K$ in $G$ (resp. of $T \times D$ in $H \times D$). By \cite[Prop 21.4B]{Humphreys} the map $W(T \times D, H \times D) \to W(K, G) $ induced by $f$ is an isomorphism. It is straightforward to check that \eqref{eq:chars} is equivariant under this identification.
\end{proof}

The simply connected almost-simple groups are in bijection with connected Dynkin diagrams (see e.g. \cite[19.64]{Milne}). We denote such a group by its associated Dynkin diagram; hence, the simply connected almost-simple groups are denoted 
\[A_n (n \geq 1), \;\;B_n (n \geq 2),\;\; C_n (n \geq 3), \;\;D_n (n \geq 4), \;\;E_6, \;\;E_7,\;\; E_8, \;\;F_4, \;\text{ and }\; G_2.\] 
The subscript is always the rank of the group.

\begin{example}\label{ex:groupsnew}
The group $A_n$ is just $SL({n+1})$ (for $n \geq 1$) and $C(n)$ is $Sp({2n})$ (for $n \geq 3$). The groups $B(n)$ and $D(n)$ are the simply connected covers of $SO({2n+1})$ (for $n \geq 2$) and $SO({2n})$ (for $n \geq 4$, respectively. 
\end{example}
\begin{definition}
A connected complex reductive group \emph{has type $A$} if it is isogeneous to a group $H_1 \times \ldots \times H_M \times D$ where $D$ is a torus and each $H_i$ is isomorphic to $SL({n_i})$ for some $n_i \geq 2$. 
\end{definition}

Let $H$ be a simply connected almost-simple group and let $T \subseteq H$ be a maximal torus. Recall that a choice of Borel subgroup $B \subseteq H$ containing $T$ determines a set of \emph{positive roots} $\Phi^+ \subseteq \Char(T)$.
Moreover for each $\alpha \in \Phi^+$ there is a 1-parameter subgroup $\alpha^\vee$ of $T$ with the property that $\langle \alpha, \alpha^\vee \rangle = 2$, where $\langle-,-\rangle$ is the canonical pairing between characters and 1-parameter subgroups of $T$.
A weight $\lambda \in \Char(T)$ is \emph{dominant} if $\langle \lambda, \alpha^\vee \rangle \geq 0$ for all $\alpha \in \Phi^+$.
There is also a \emph{dominance order} on $\Char(T)$, where for $\lambda, \mu \in \Char(T)$ we say $\lambda \leq \mu$ if $\mu-\lambda$ can be written as a nonnegative integral sum of positive roots. 

A representation of $H$ is \textit{simple} if it is nonzero and it has no proper nonzero subrepresentations. By the theorem of highest weight, simple representations of $H$ biject with dominant weights in $\chi(T)$. Using this theorem, the dominance order on the set of dominant weights has an equivalent representation-theoretic description:

\begin{lemma}[{\cite[Cor 1.10, Rem 1.11]{Stembridge}}] \label{lem:weights-contained} Let $\lambda, \mu \in \chi(T)$ be dominant weights. Then $\lambda \leq \mu$ if and only if the weights of the representation corresponding to $\lambda$ are a subset of the weights of the representation corresponding to $\mu$.
\end{lemma}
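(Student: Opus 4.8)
The plan is to derive the lemma from the standard description of the weight set of a simple module. Write $V_\lambda$ for the simple representation of $H$ of highest weight $\lambda$ and $\Pi(\lambda)\subseteq\chi(T)$ for its set of weights; for a weight $\nu$ let $\nu^{+}$ denote the unique dominant element of the Weyl orbit $W\nu$. The crux is the characterization
\[
\nu\in\Pi(\mu)\quad\Longleftrightarrow\quad\nu^{+}\le\mu .
\]
Granting this, the lemma follows at once. If $\Pi(\lambda)\subseteq\Pi(\mu)$ then $\lambda\in\Pi(\mu)$, and since $\lambda$ is dominant the characterization gives $\lambda=\lambda^{+}\le\mu$. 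Conversely, if $\lambda\le\mu$ then every $\nu\in\Pi(\lambda)$ satisfies $\nu^{+}\le\lambda\le\mu$, hence $\nu\in\Pi(\mu)$; so $\Pi(\lambda)\subseteq\Pi(\mu)$.

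To prove ``$\Rightarrow$'' of the characterization: $V_\mu$ is spanned by the images of a highest weight vector under products of root vectors for negative roots (PBW for $U(\mathfrak{n}^{-})$), so every weight of $V_\mu$ is $\le\mu$; moreover $\Pi(\mu)$ is stable under $W$. Applying the first point to $\nu^{+}\in W\nu\subseteq\Pi(\mu)$ gives $\nu^{+}\le\mu$. For ``$\Leftarrow$'', using $W$-stability of $\Pi(\mu)$ it suffices to take $\nu=\nu^{+}$ dominant, and we induct on the height of $\mu-\nu$ as a sum of positive roots. The case $\nu=\mu$ is trivial. If $\nu<\mu$, fix a $W$-invariant inner product $(\cdot,\cdot)$ on $\chi(T)_\RR$ and write $\mu-\nu=\sum_{\alpha\in\Phi^{+}}c_\alpha\alpha$ with $c_\alpha\in\ZZ_{\ge0}$ not all zero; from $0<(\mu-\nu,\mu-\nu)=\sum_\alpha c_\alpha(\alpha,\mu-\nu)$ we get a positive root $\alpha$ with $c_\alpha>0$ and $(\alpha,\mu-\nu)>0$. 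Then $\nu+\alpha\le\mu$, hence also $(\nu+\alpha)^{+}\le\mu$, and this weight has strictly smaller height than $\mu-\nu$ (here one uses the sub-lemma that $\mu$ dominant and $\xi\le\mu$ imply $\xi^{+}\le\mu$, together with the fact that $\xi^{+}$ is the largest element of $W\xi$ in the dominance order). By the inductive hypothesis $(\nu+\alpha)^{+}\in\Pi(\mu)$, so $\nu+\alpha\in\Pi(\mu)$ by $W$-stability. Finally $\langle\nu+\alpha,\alpha^{\vee}\rangle=\langle\nu,\alpha^{\vee}\rangle+2\ge2>0$ because $\nu$ is dominant, so by $\mathfrak{sl}_2$-theory applied to the triple attached to $\alpha$ the weight $(\nu+\alpha)-\alpha=\nu$ also occurs in $V_\mu$, i.e.\ $\nu\in\Pi(\mu)$.

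The main obstacle is the reverse implication, and inside it the bookkeeping needed to push the induction through the non-dominant weights $\nu+\alpha$ --- concretely, the sub-lemma that $\xi\le\mu$ with $\mu$ dominant forces $\xi^{+}\le\mu$, which is itself proved by a short induction applying simple reflections to drag $\xi$ toward the dominant chamber. The cleanest way to organize everything is via Humphreys' notion of a \emph{saturated} set of weights: one checks that $\Pi(\mu)$ is saturated with highest weight $\mu$, and that any saturated set with highest weight $\mu$ equals $\{\nu:\nu^{+}\le\mu\}$. Alternatively, one simply cites the results of Stembridge referenced in the statement.
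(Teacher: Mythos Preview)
The paper does not prove this lemma at all---it simply records it with a citation to Stembridge, so there is no proof to compare against. Your reduction to the characterization $\nu\in\Pi(\mu)\iff\nu^{+}\le\mu$ is the right move, and your final sentence (invoke saturated sets or just cite Stembridge) is exactly what the paper does.

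That said, the explicit argument you give for the reverse implication of the characterization has a genuine gap. The ``sub-lemma'' you invoke---that $\mu$ dominant and $\xi\le\mu$ force $\xi^{+}\le\mu$---is \emph{false}. In type $A_2$ take $\mu=\omega_1$ and $\xi=\omega_1-\alpha_2=2\omega_1-2\omega_2$; then $\mu-\xi=\alpha_2\ge 0$ so $\xi\le\mu$, but $\xi^{+}=s_2\xi=2\omega_2$ and $\mu-\xi^{+}=\omega_1-2\omega_2=-\alpha_2$, so $\xi^{+}\not\le\mu$. The ``short induction applying simple reflections'' you propose for the sub-lemma therefore cannot succeed as stated, and your upward induction from $\nu$ toward $\mu$ stalls at the step where you need $(\nu+\alpha)^{+}\le\mu$. (The extra condition $(\alpha,\mu-\nu)>0$ that you record is not used anywhere and does not rescue this step.)

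The standard fix is to run the induction \emph{downward} from $\mu$ rather than upward from $\nu$: take $\mu'\in\Pi(\mu)$ with $\nu\le\mu'$ and $\mu'-\nu$ of minimal height; if $\mu'\neq\nu$, from $0<(\mu'-\nu,\mu'-\nu)$ and dominance of $\nu$ one extracts a simple root $\alpha$ with positive coefficient in $\mu'-\nu$ and $\langle\mu',\alpha^{\vee}\rangle>0$, whence $\mu'-\alpha\in\Pi(\mu)$ by the $\mathfrak{sl}_2$ string property, contradicting minimality. This is the saturated-sets argument you allude to, and it avoids the false sub-lemma entirely.
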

\begin{remark}\label{rmk:finding-weights}
If $\lambda$ is a dominant weight, the $T$-weights of $V_\lambda$ are those weights $\mu$ 
such that some element of their Weyl-orbit is dominated by $\lambda$ (see e.g. \cite[Rem 1.11]{Stembridge}). 
\end{remark}

In the next section we will analyze representations of $H$ that correspond to dominant weights $\lambda$ that are minimal for the order $\leq$. The representations have the property that the weights of an arbitrary representation of $H$ will contain the weights of at least one of these dominance-minimal weights as a subset. 
\begin{lemma}[{\cite[Prop 1.12]{Stembridge}}]
A dominant weight is minimal with respect to $\leq$ if and only if it is 0 or \emph{minuscule}.
\end{lemma}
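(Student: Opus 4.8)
The plan is to translate the statement into the combinatorics of weight sets and then appeal to Lemma~\ref{lem:weights-contained} and Remark~\ref{rmk:finding-weights}. I would first record two standard facts to use as black boxes. The first is a dictionary: for dominant weights $\mu$ and $\lambda$, one has $\mu \le \lambda$ if and only if $\mu$ occurs as a $T$-weight of $V_\lambda$. One direction is immediate from Lemma~\ref{lem:weights-contained}; the other is Remark~\ref{rmk:finding-weights} combined with the fact that the dominant member of a Weyl orbit is the largest element of that orbit in the dominance order. The second is the characterization of minuscule weights: a nonzero dominant weight $\lambda$ is minuscule exactly when the set of $T$-weights of $V_\lambda$ equals the single Weyl orbit $W\lambda$ (equivalently $\langle\lambda,\alpha^\vee\rangle\le 1$ for all $\alpha\in\Phi^+$). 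I would also use that the weight set of any representation is a union of Weyl orbits, each of which contains a unique dominant weight. One small point to settle at the outset: here ``minimal'' should mean minimal in the poset of \emph{dominant} weights, of which $0$ is the least element (it is of course not minimal in all of $\Char(T)$).

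For the ``if'' direction I would take $\lambda$ equal to $0$ or minuscule, let $\mu$ be dominant with $\mu\le\lambda$, and show $\mu=\lambda$. When $\lambda=0$: $-\mu$ is a nonnegative integral combination of positive roots, so fixing a $W$-invariant inner product $(\cdot,\cdot)$ on $\Char(T)_\RR$ and using that $(\mu,\alpha)\ge 0$ for all $\alpha\in\Phi^+$ (as $\mu$ is dominant and positive roots are nonnegative combinations of simple roots) forces $(\mu,\mu)\le 0$, hence $\mu=0$. When $\lambda$ is minuscule: the dictionary puts $\mu$ among the weights of $V_\lambda$, so $\mu\in W\lambda$, and then $\mu=\lambda$ by uniqueness of the dominant weight in an orbit. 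Either way $\lambda$ is minimal.

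For the converse I would argue the contrapositive: assume $\lambda$ is a nonzero dominant weight that is not minuscule, and produce a dominant weight strictly below it. By the minuscule characterization the weight set of $V_\lambda$ properly contains $W\lambda$, so—being a union of Weyl orbits—it meets some orbit other than $W\lambda$; let $\mu$ be the dominant member of that orbit. Then $\mu$ is a dominant weight of $V_\lambda$ with $\mu\ne\lambda$, and the dictionary gives $\mu\le\lambda$, hence $\mu<\lambda$ and $\lambda$ is not minimal. (If one prefers to avoid quoting the minuscule characterization, the same conclusion can be reached by hand: if $\lambda$ is not minuscule there is $\alpha\in\Phi^+$ with $\langle\lambda,\alpha^\vee\rangle\ge 2$, so $\lambda-\alpha$ lies in the $\alpha$-string through $\lambda$ and is therefore a weight of $V_\lambda$ of strictly smaller norm than $\lambda$; it is thus not in $W\lambda$, and one passes to its dominant conjugate.)

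The only ingredient not already in the excerpt is the ``hard'' half of the dictionary—that a dominant weight of $V_\lambda$ is $\le\lambda$, equivalently the saturation of weight sets—and this is where I expect the single nontrivial step to be, although it is entirely classical (it is precisely the refinement of Remark~\ref{rmk:finding-weights} for dominant weights). Everything else—the $\lambda=0$ case, the orbit bookkeeping, and the equivalence between the definition of minuscule and the single-orbit description—is formal.
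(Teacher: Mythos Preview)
The paper does not prove this lemma: it is quoted from \cite[Prop~1.12]{Stembridge} without argument, and the very next sentence declares that ``for us, the lemma functions as the definition of a minuscule weight.'' Within the paper's conventions the statement is therefore a tautology, and there is no proof to compare against.

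Your argument is correct on its own terms, but it necessarily imports a \emph{different} definition of minuscule (single Weyl orbit, equivalently $\langle\lambda,\alpha^\vee\rangle\le 1$ for all $\alpha\in\Phi^+$) and then proves the equivalence with dominance-minimality. That is a perfectly good reconstruction of Stembridge's result, and your use of the dictionary from Lemma~\ref{lem:weights-contained} and Remark~\ref{rmk:finding-weights} is clean. Just be aware that in the context of this paper your proof is establishing the equivalence of two competing definitions rather than supplying a missing argument, so if you include it you should flag which definition of ``minuscule'' you are starting from.
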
 
For us, the lemma functions as the definition of a minuscule weight (i.e., a dominant weight $\lambda \in \chi(T)$ is minuscule if it is dominance minimal and nonzero). The minuscule weights of each simply connected almost-simple group are finite in number and known explicitly: they are listed, for example, in \cite[Chapter VI, Exercise 4.15 (p.232)]{Bourbaki}.

\subsection{Convex geometry}
We recall Carath\'eodory's theorem and some consequences. The only new result in this section is Proposition \ref{P:box-new}, an application of Carath\'eodory's theorem that we will use extensively.

If $I = \{\xi_1, \ldots, \xi_n\}$ is a finite subset of a rational vector space $V$, then the cone generated by $I$ is the set
\[
\Cone(I) := \{ \sum_{\xi \in I} a_\xi \xi \mid a_\xi \in \QQ_{\geq 0}\}.
\]

\begin{lemma}[Carath\'eodory]\label{lem:car}
Let $I$ be a finite subset of a rational vector space. If $\Cone(I)$ spans a linear subspace of dimension at most $m$ and $\theta \in \Cone(I)$, then $\theta$ is in $\Cone(J)$ for some $J \subset I$ of cardinality at most $m.$
\end{lemma}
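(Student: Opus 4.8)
The plan is to establish the standard fact underlying Carath\'eodory's theorem: any element of a cone already lies in the cone generated by a \emph{linearly independent} subset of the given generators, and the hypothesis on the dimension of the span then bounds the size of that subset by $m$. So, given $\theta \in \Cone(I)$, I would write $\theta = \sum_{\xi \in I} a_\xi \xi$ with all $a_\xi \in \QQ_{\geq 0}$, and among all such expressions choose one whose support $J := \{\xi \in I : a_\xi > 0\}$ has minimal cardinality. If $\theta = 0$ one takes $J = \emptyset$, so assume $\theta \neq 0$. The claim is that this minimal-support $J$ is linearly independent; granting it, $J$ spans a subspace of the span of $\Cone(I)$, hence $|J| \leq \dsp(\Cone(I)) \leq m$, and $\theta \in \Cone(J)$ as required.

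To prove the claim I would argue by contradiction. Suppose $J$ is linearly dependent, so there exist rationals $(c_\xi)_{\xi \in J}$, not all zero, with $\sum_{\xi \in J} c_\xi \xi = 0$. Replacing $(c_\xi)$ by $(-c_\xi)$ if necessary, we may assume $c_{\xi_0} > 0$ for at least one $\xi_0 \in J$. For $t \geq 0$ set $a_\xi(t) := a_\xi - t c_\xi$ for $\xi \in J$, so that $\sum_{\xi \in J} a_\xi(t)\xi = \theta$ for every $t$. Put
\[
t^* := \min\left\{ \frac{a_\xi}{c_\xi} : \xi \in J,\ c_\xi > 0 \right\},
\]
which is a well-defined nonnegative rational since the set is nonempty (it contains $\xi_0$) and each $a_\xi > 0$. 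Then $a_\xi(t^*) \geq 0$ for all $\xi \in J$ — for indices with $c_\xi > 0$ by the choice of $t^*$, and for indices with $c_\xi \leq 0$ automatically — while $a_\xi(t^*) = 0$ for an index achieving the minimum. Setting $a_\xi(t^*) = 0$ for $\xi \notin J$, we obtain an expression of $\theta$ as a nonnegative rational combination of elements of $I$ with strictly smaller support, contradicting the minimality of $|J|$. Hence $J$ is linearly independent, completing the proof.

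I do not expect a serious obstacle here: the argument is elementary. The only points needing a little care are the normalization ensuring some $c_{\xi_0} > 0$ (so that the minimum defining $t^*$ is taken over a nonempty index set) and the observation that all quantities remain in $\QQ$, which is immediate since the $a_\xi$ and $c_\xi$ are rational.
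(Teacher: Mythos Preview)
Your proof is correct and follows essentially the same approach as the paper: both take an expression for $\theta$ with minimal support, use a linear dependence among the supporting vectors to find a perturbation that kills one coefficient while keeping the others nonnegative, and thereby derive a contradiction. Your explicit formula $t^* = \min\{a_\xi/c_\xi : c_\xi > 0\}$ is a slightly crisper version of the paper's choice of perturbation parameter, but the argument is the same.
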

\begin{proof}
Suppose $n$ is the minimal positive integer such that $\theta = \sum_{i=1}^n a_i \xi_i$ for some $c_i \in \QQ_{\geq 0}$ and $\xi_1, \ldots, \xi_n \in I$. If $n > m$ (arguing by contradiction), then there are rational numbers $b_1, \ldots, b_n$, not all zero, such that $\sum_{i=1}^n b_i\xi_i = 0.$  It follows that
\[
\theta = \sum (a_i + \epsilon b_i) \xi_i
\]
for all rational $\epsilon$. We will show that we can choose $\epsilon$ such that all coefficients $a_i + \epsilon b_i$ are nonnegative and at least one of them is zero. This contradicts minimality of $n$.

To choose $\epsilon$, note first that when $\epsilon$ is sufficiently small all coefficients $a_i + \epsilon b_i$ are nonnegative (since $a_i$ is positive). Next, for each $i$ there is at most one $\epsilon_i$ such that $a_i + \epsilon_i b_i=0$. If we set $\epsilon_i=\infty$ when no such finite $\epsilon$ exists, then for all $i$ it is true that for $|\epsilon| < |\epsilon_i|$ we have $a_i + \epsilon b_i > 0$.  Finally, if we choose $\epsilon$ to equal the minimum of the finite set $\{1, \epsilon_1, \ldots, \epsilon_n\}$, then $\epsilon$ satisfies all the required conditions.
\end{proof}

\begin{corollary}\label{cor:car}
Let $I$ be a nonempty finite subset of a rational vector space and let $m$ be the dimension of the span of $I$. If there exist $a_\xi \in \QQ_{\geq 0}$ for $ \xi \in I$ satisfying
\begin{equation}\label{eq:degency}
\sum_{\xi \in I} a_\xi \xi = 0 \quad \quad \quad \sum_{\xi \in I} a_\xi = 1
\end{equation}
then one can arrange for \eqref{eq:degency} to hold with at most $m+1$ of the $a_\xi$ nonzero.
\end{corollary}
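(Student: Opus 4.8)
The plan is to deduce this from Lemma \ref{lem:car} by the standard homogenization trick. Let $V$ denote the ambient rational vector space. First I would pass to $V \oplus \QQ$ and replace each $\xi \in I$ by $\hat\xi := (\xi, 1)$, setting $\hat I := \{\hat\xi \mid \xi \in I\}$. The two conditions in \eqref{eq:degency} say exactly that $\sum_{\xi \in I} a_\xi \hat\xi = (0,1)$ with all $a_\xi \in \QQ_{\geq 0}$, so $(0,1) \in \Cone(\hat I)$. (Note $(0,1) \neq 0$, consistent with the second equation in \eqref{eq:degency}.)

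Next I would bound the dimension of the span of $\hat I$. The coordinate projection $V \oplus \QQ \to V$ maps $\operatorname{span}(\hat I)$ onto $\operatorname{span}(I)$, which has dimension $m$ by hypothesis, and the kernel of this projection is the line $0 \oplus \QQ$; hence $\dim \operatorname{span}(\hat I) \leq m+1$. Now apply Lemma \ref{lem:car} to the finite set $\hat I$ with this bound: since $(0,1) \in \Cone(\hat I)$, there is a subset $\hat J \subseteq \hat I$ of cardinality at most $m+1$ with $(0,1) \in \Cone(\hat J)$.

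Finally I would unwind the construction. Write $\hat J = \{\hat\xi \mid \xi \in J\}$ for the corresponding subset $J \subseteq I$, so $|J| \leq m+1$, and pick $b_\xi \in \QQ_{\geq 0}$ for $\xi \in J$ with $\sum_{\xi \in J} b_\xi \hat\xi = (0,1)$. Reading off the $V$-coordinate and the $\QQ$-coordinate separately gives $\sum_{\xi \in J} b_\xi \xi = 0$ and $\sum_{\xi \in J} b_\xi = 1$, which is precisely \eqref{eq:degency} for the coefficient system obtained by extending $(b_\xi)_{\xi \in J}$ by zero to all of $I$; at most $m+1$ of these coefficients are nonzero. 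There is no real obstacle here: the only point requiring care is the dimension count in $V \oplus \QQ$, which is what makes the Carath\'eodory bound come out as $m+1$ rather than $m+2$.
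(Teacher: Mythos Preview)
Your proof is correct. It differs from the paper's argument: you homogenize, replacing each $\xi$ by $(\xi,1)\in V\oplus\QQ$, and then apply Lemma~\ref{lem:car} to the point $(0,1)$ inside a cone spanning a subspace of dimension at most $m+1$. The paper instead stays in $V$: it singles out one $\xi_1$ with $a_{\xi_1}>0$, writes $-a_{\xi_1}\xi_1$ as a nonnegative combination of the remaining vectors, and applies Lemma~\ref{lem:car} in $\operatorname{span}(I)$ (dimension $m$) to cut that combination down to at most $m$ terms, giving $m+1$ in total after restoring $\xi_1$ and rescaling. Your homogenization trick is the textbook passage from the conical to the affine form of Carath\'eodory and has the advantage of packaging both equations of~\eqref{eq:degency} into a single cone-membership statement; the paper's argument avoids the auxiliary coordinate but requires isolating a term and rescaling at the end. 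Both are short and standard.
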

\begin{proof}

Given an expression \eqref{eq:degency}, choose $\xi_1 \in I$ such that $a_{\xi_1} \neq 0$. Then 
\[
-a_1\xi_1 = \sum_{\xi \in I \setminus\{\xi_1\}} a_\xi \xi
\]
and by Lemma \ref{lem:car}, the right hand side is in $\Cone(J)$ for some $J \subseteq (I \setminus \{\xi_1\})$ of cardinality at most $m$. So we may write
\[
0 = a_1\xi_1 + \sum_{\xi \in J} a_\xi \xi
\]
with $a_1, a_\xi \in \QQ_{\geq 0}$ and $a_1 + \sum_{\xi \in J} a_\xi > 0$. The corollary follows.
\end{proof}


\begin{proposition}\label{P:box-new}
Let $V$ and $W$ be vector spaces over $\QQ$. Let $v \in V$ and $w \in W$ be vectors, let $m$ and $n$ be positive integers, and let $v_1,..,v_n\in V,$ and $w_1,..,w_m\in W$ be vectors satisfying 
\[\sum_{i=1}^n a_iv_i=v \quad \quad \quad \text{and} \quad \quad \quad \sum_{j=1}^m b_jw_j=w\]
for some \emph{positive} rational numbers $a_i$ and $b_j$. If $v\neq 0$ and $w\neq 0$ then furthermore assume $\sum_{i=1}^n a_i = \sum_{j=1}^m b_j$. Then 
there is an equality 
\[\sum_{k=1}^{n+m-1} c_k(v_{i_k},w_{j_k})=(v, w)\] for some $c_k\in\QQ_{\geq0}$ and at least one $c_k$ positive.
\end{proposition}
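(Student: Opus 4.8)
The plan is to reduce the assertion to the existence of a sparse transportation matrix, and to produce that matrix by applying Carath\'eodory's theorem (Lemma~\ref{lem:car}) in an auxiliary coordinate space. First I would normalize the hypotheses so that $\sum_{i=1}^n a_i=\sum_{j=1}^m b_j$ in all cases. If $v\neq 0$ and $w\neq 0$ this is assumed. If $v=0$, then $\sum_i a_i v_i=0$ still holds after rescaling all the $a_i$ by a common positive factor, so we may rescale to achieve the equality; symmetrically if $w=0$. Thus we may assume $\Lambda:=\sum_i a_i=\sum_j b_j>0$, while retaining $\sum_i a_i v_i=v$ and $\sum_j b_j w_j=w$.

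Now let $\mathbf e_1,\dots,\mathbf e_n$ and $\mathbf f_1,\dots,\mathbf f_m$ be the standard bases of $\QQ^n$ and $\QQ^m$. The key observation is that all $nm$ vectors $(\mathbf e_i,\mathbf f_j)\in\QQ^n\oplus\QQ^m$ lie in the hyperplane $\{(x,y):\sum_i x_i=\sum_j y_j\}$, which has dimension $n+m-1$; hence $\dim\mathrm{span}\{(\mathbf e_i,\mathbf f_j)\}\le n+m-1$. The margin vector $(a,b):=\bigl(\sum_i a_i\mathbf e_i,\ \sum_j b_j\mathbf f_j\bigr)$ lies in this hyperplane by the normalization, and it satisfies $(a,b)=\sum_{i,j}\frac{a_i b_j}{\Lambda}(\mathbf e_i,\mathbf f_j)$, since $\sum_j\frac{a_i b_j}{\Lambda}=a_i$ and $\sum_i\frac{a_i b_j}{\Lambda}=b_j$. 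So $(a,b)$ is a nonnegative combination of the $(\mathbf e_i,\mathbf f_j)$, and it is nonzero because $\Lambda>0$. Applying Lemma~\ref{lem:car} to $I=\{(\mathbf e_i,\mathbf f_j)\}$ and $\theta=(a,b)$ yields coefficients $c_{ij}\ge 0$ supported on at most $n+m-1$ pairs with $\sum_{i,j}c_{ij}(\mathbf e_i,\mathbf f_j)=(a,b)$, i.e.\ with $\sum_j c_{ij}=a_i$ and $\sum_i c_{ij}=b_j$ for all $i,j$; at least one $c_{ij}$ is positive because $\sum_{i,j}c_{ij}=\Lambda>0$.

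It remains to transport this back. Using $\sum_j c_{ij}=a_i$ and $\sum_i c_{ij}=b_j$ one computes
\[
\sum_{i,j}c_{ij}(v_i,w_j)=\Bigl(\sum_i a_i v_i,\ \sum_j b_j w_j\Bigr)=(v,w).
\]
Relabeling the at most $n+m-1$ pairs $(i,j)$ with $c_{ij}\neq 0$ as $(i_k,j_k)$ and setting $c_k:=c_{i_kj_k}$, then (if necessary, using $n,m\ge 1$) appending terms with zero coefficient to reach exactly $n+m-1$ summands, produces the claimed equality with at least one $c_k$ positive.

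The step I expect to require the most care is pinning the count at $n+m-1$ rather than $n+m$. A naive application of Carath\'eodory directly to $(v,w)$ would use the bound $\dim\mathrm{span}\{(v_i,w_j)\}$ (which can indeed be as large as $n+m-1$, since $(v_i,w_j)=(v_i,w_1)+(v_1,w_j)-(v_1,w_1)$ shows the span is generated by those $n+m-1$ vectors) and then lose an extra $+1$ through Corollary~\ref{cor:car} exactly in the degenerate case $(v,w)=0$. Routing the argument through the combinatorial shadow $(a,b)$, which is never zero, simultaneously avoids that degenerate case and fixes the relevant dimension at $n+m-1$ via the single relation ``total of the row sums equals total of the column sums.''
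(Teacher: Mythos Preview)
Your argument is correct and is essentially the paper's first proof: normalize to $|a|=|b|$, pass to the ``universal'' coordinate space $\QQ^n\oplus\QQ^m$, observe that the $(\mathbf e_i,\mathbf f_j)$ and the margin vector lie in the hyperplane $\{\sum x_i=\sum y_j\}$ of dimension $n+m-1$, and apply Carath\'eodory there; you are in fact slightly more explicit than the paper in exhibiting the product measure $a_ib_j/\Lambda$ to certify that $(a,b)$ lies in the cone. The paper also supplies a second, constructive proof via the northwest-corner transportation algorithm, which your proposal does not need but which gives an explicit recipe for the $c_{ij}$.
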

We give two proofs of Proposition \ref{P:box-new}, one using Carath\'eodory's theorem (which is not constructive), and one that gives an explicit algorithm for finding a set of coefficients $c_k$.
\begin{proof}[First proof of \ref{P:box-new}]

Let $|a| = \sum_{i=1}^n a_i$ and let $|b| = \sum_{j=1}^m b_j$. We note that if $v = 0$, we may replace $a_i$ by $|a|^{-1}|b|a_i$ to ensure that $|a|=|b|$ holds. We can make an analogous replacement if $w=0$ and thus assume $|a| = |b|$ holds in general.
Therefore it is enough to prove the proposition in the universal 
situation where $\{v_i\}_{i=1}^n$ is a basis for $V$ and $\{w_j\}_{j=1}^m$ is a basis for $W$ and $|a| = |b|$. 
From the universal result the special result is obtained by projecting these bases to the original (potentially linearly dependent) vectors $v_i$ and $w_j$.

In 
this 
setting, we have $\dim V = n$ and $\dim W = m$, so $\dim V \times W = n + m$. Let $v^*$ be the linear function that sends $\sum_{i=1}^n d_i v_i$ to $|d|$ and let $w^*$ be the linear function that sends $\sum_{j=1}^m e_jw_j$ to $|e|$. Then the vectors $(v_i, w_j)$ and $(v, w)$  live in $\ker(v^*-w^*)$ which is a vector space of dimension $n+m-1$. Carath\'eodory's theorem \ref{lem:car} implies that $(v, w)$ can be realized as a nonnegative rational linear combination of at most $n+m-1$ of the vectors $(v_i, w_j)$. At least one of these coefficients is positive since $v$ is nonzero (being the sum of elements of a basis of a positive-dimensional vector space).


\end{proof}

\begin{proof}[Second proof of \ref{P:box-new}]
We give an algorithm that constructs the required coefficients $c_{ij}$. Let $|a| = \sum_i a_i$ and $|b| = \sum_j b_j$. As in the previous proof, we may assume $|a| = |b|.$

To set up the algorithm, let $C=(c_{ij})$ be the $n \times m$ matrix with all entries equal to zero. Our goal is to iteratively modify entries of $C$ until
\begin{enumerate}
\item[(i)] $c_{n m} \neq 0$ 
\item[(ii)] $\sum_j c_{ij} = a_i,$ for all $i$ and $\sum_i c_{ij} = b_j$ for all $j$
\item[(iii)] at most $n+m-1$ of the $c_{ij}$ are nonzero
\item[(iv)] all $c_{ij}$ are nonnegative.
\end{enumerate}
Condition (ii) implies that $\sum_{i, j} c_{ij}(v_{i}, w_{j}) = (v, w)$, so at the termination of the algorithm the entries of $C$ will be the desired coefficients.

The algorithm runs as follows.
\begin{enumerate}
\item Initiate a location coordinate $(i,j)$ to the value $(1,1)$.
\item Update $c_{ij}$: If the location coordinate has value $(i, j)$, set $c_{ij}$ to be the minimum of the two quantities
\begin{equation}\label{eq:two}
\; a_i - \sum_{k < j} c_{ik}\;\quad\quad\quad \text{and}\quad\quad\quad b_j - \sum_{k < i} c_{kj} \;.
\end{equation}
\item Update the location coordinate: If the first quantity in \eqref{eq:two} equals minimum, increase $i$ by one. If the second quantity equals the minimum, increase $j$ by one. 
\item Repeat steps 2-3. Stop when $i>n$ or $j>m$.
\end{enumerate}

When the algorithm terminates, our matrix $C$ will satisfy (i) (since $a_n$ and $b_m$ are positive). It will also satisfy (iii), as an inductive argument
shows that after updating entry $c_{ij}$ at most $i+j-1$ entries of the matrix are nonzero. (For the inductive step, note that each iteration of steps 2-3 adds at most 1 to the number of nonzero entries in $C$ and it adds exactly 1 to the quantity $i+j-1$.)

To show that (ii) and (iv) hold, note first that at each iteration of the algorithm, for all $x=1, \ldots, n$ and $y = 1, \ldots, m$ we have
\[
\sum_{k}c_{xk} \leq a_x \quad\quad\quad \text{and}\quad\quad\quad \sum_k c_{ky} \leq b_y
\]
with equality if $x < i$ or $y < j$. The inequalities follow from always choosing $c_{ij}$ to be the \textit{minimum} of the quantities \eqref{eq:two}, and the equalities are a consequence of our rule for updating the location coordinate. From this (iv) follows from our formula for $c_{ij}$.

We now show that the algorithm terminates after filling in $c_{nm}$. For the first claim, suppose the algorithm terminates when the location counter is $(n+1, j)$ for some $j<m$. Then since the $i$th row sums to $a_i$ for all $i=1, \ldots n$, the entries in $C$ add to $|a| = |b|$. On the other hand, the sum of the entries in the $j$th column is at most $b_j$ and the sum of the $m$th column is $0 < b_m$. This is a contradiction. An analogous argument shows that the algorithm cannot terminate at $(i, m+1)$ for $i<n$.

Hence the algorithm terminates when the location coordinate is $(i, j)$ for $i \geq n$ and $j \geq m$ and at least one of these equalities strict. In light of the previous discussion, to show (ii) it suffices to check that in the final step of the algorithm, the two quantities
\[
 a_n - \sum_{k < m} c_{nk}\;\quad\quad\quad \text{and}\quad\quad\quad b_m - \sum_{k < n} c_{km} \;.
\]
are equal. But this holds because both are equal to the difference $|a| - |C| = |b| - |C|$, where $|C|$ is the sum $\sum_{(i, j) \neq (n, m)} c_{ij}$.

\end{proof}

\section{Twisted affine GIT}
Much of this section is expository; only the final Example \ref{ex:quiver1}, Proposition \ref{prop:oplus}, and Corollary \ref{cor:oplus} are new.

\subsection{Affine GIT}
Let $G$ be a complex reductive group. A \textit{1-parameter subgroup} of $G$ is a homomorphism $\Gm \to G$, and a \textit{character} of $G$ is a homomorphism $ G \to \Gm$. If $\theta$ is a character and $\lambda$ is a 1-parameter subgroup of $G$, we have an integer-valued pairing
\begin{equation}\label{eq:pairing}
\langle \theta, \lambda \rangle = a \quad \quad \quad \text{if} \quad \quad \quad \lambda(\theta(t)) = t^a.
\end{equation}

The data of a twisted affine GIT quotient is a triple $(V, G, \theta)$ where $V$ is a linear representation of $G$ and $\theta$ is a character of $G$. The associated locus of semistable points was defined in \cite{GIT} and may be characterized by the following numerical criterion of \cite[Prop 2.5]{king}.

\begin{definition}\label{def:ss}
The \emph{{semistable locus}} $V^{\sst}_{\theta}(G) \subseteq V$ is the open subvariety whose $\CC$-points are $x\in V(\CC)$ such that whenever $\lim_{t\to 0}\lambda(t)\cdot x$ exists for a $1$-parameter subgroup $ \lambda$ of $G$, we have $\langle \theta, \lambda \rangle\geq 0$.
\end{definition}
The \emph{unstable locus} is the complement of the semistable locus and denoted $V^{us}_\theta(G)$.

When $T \subseteq G$ is a maximal torus, the following lemma relates semistability for $T$ and $G$.

\begin{lemma}\label{lem:torus-to-group}

Let $(V,G,\theta)$ be as above and let $T \subseteq G$ be a maximal torus. Then
\[V^{ss}_\theta(G)=\bigcap_{g\in G}gV_\theta^{ss}(T) \quad \quad \text{and hence}\quad\quad V^{us}_\theta(G) = G \,\cdot\,V^{us}_\theta(T).\]
\end{lemma}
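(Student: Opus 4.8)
The plan is to reduce everything to the numerical criterion of Definition~\ref{def:ss} and to exploit two standard facts from the structure theory of reductive groups: first, that every one-parameter subgroup $\lambda$ of $G$ is conjugate to one taking values in $T$ (the image of $\lambda$ lies in some maximal torus of $G$, and all maximal tori are $G$-conjugate); and second, that the pairing $\langle\theta,\lambda\rangle$ is unchanged under replacing $\lambda$ by a conjugate $t\mapsto g\lambda(t)g^{-1}$, since the character $\theta\colon G\to\Gm$ kills commutators, so $\theta(g\lambda(t)g^{-1})=\theta(\lambda(t))$.

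First I would prove the equality $V^{ss}_\theta(G)=\bigcap_{g\in G}gV^{ss}_\theta(T)$. Since the $G$-action on subsets is a bijection, the condition $x\in\bigcap_{g}gV^{ss}_\theta(T)$ is equivalent to $gx\in V^{ss}_\theta(T)$ for every $g\in G$, so it suffices to show: $x$ is $G$-semistable if and only if $gx$ is $T$-semistable for all $g\in G$. For the forward direction, let $x\in V^{ss}_\theta(G)$, fix $g\in G$, and let $\lambda$ be a one-parameter subgroup of $T$ with $\lim_{t\to0}\lambda(t)\cdot(gx)$ existing. Put $\mu(t)=g^{-1}\lambda(t)g$, a one-parameter subgroup of $G$. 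Then $\mu(t)\cdot x=g^{-1}\cdot\bigl(\lambda(t)\cdot(gx)\bigr)$, and since translation by $g^{-1}$ is a homeomorphism of $V$, the limit $\lim_{t\to0}\mu(t)\cdot x$ exists. Semistability of $x$ for $G$ gives $\langle\theta,\mu\rangle\ge0$, and $\langle\theta,\mu\rangle=\langle\theta,\lambda\rangle$ by conjugation-invariance of the pairing; hence $\langle\theta,\lambda\rangle\ge0$ and $gx\in V^{ss}_\theta(T)$.

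For the reverse direction, suppose $gx\in V^{ss}_\theta(T)$ for all $g$, and let $\lambda$ be a one-parameter subgroup of $G$ with $\lim_{t\to0}\lambda(t)\cdot x$ existing. Choose $g\in G$ so that $\mu(t):=g\lambda(t)g^{-1}$ lands in $T$. Then $\mu(t)\cdot(gx)=g\cdot(\lambda(t)\cdot x)$ has a limit as $t\to0$, so $T$-semistability of $gx$ gives $\langle\theta,\mu\rangle\ge0$, and again $\langle\theta,\mu\rangle=\langle\theta,\lambda\rangle$, so $x$ is $G$-semistable. This establishes the first displayed identity, and the statement about unstable loci is then purely formal: taking complements in $V$,
\[
V^{us}_\theta(G)=V\setminus\bigcap_{g\in G}gV^{ss}_\theta(T)=\bigcup_{g\in G}g\bigl(V\setminus V^{ss}_\theta(T)\bigr)=\bigcup_{g\in G}gV^{us}_\theta(T)=G\cdot V^{us}_\theta(T).
\]

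I do not expect a genuine obstacle here: the whole content sits in the conjugacy of one-parameter subgroups into $T$ and the conjugation-invariance of $\theta$, both elementary. The only points requiring care are bookkeeping ones—keeping track of the direction of conjugation (so that $\mu$ really lands in the intended group), and checking that the hypothesis "the limit exists" transfers between $x$ and $gx$, which it does precisely because $v\mapsto g\cdot v$ is a homeomorphism of $V$.
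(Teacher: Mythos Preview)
Your argument is correct and follows essentially the same route as the paper's proof: both reduce to the numerical criterion, use that every one-parameter subgroup of $G$ is conjugate into $T$, and note that conjugating $\lambda$ preserves both the existence of the limit and the pairing $\langle\theta,\lambda\rangle$. Your write-up is more explicit about the conjugation-invariance of $\theta$ and about deriving the unstable-locus statement by taking complements, but the content is the same.
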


\begin{proof}
We have $x\in \bigcap_{g\in G}gV_\theta^{ss}(T)$ if and only if, for all $g \in G$ and $\lambda: \Gm \to T$, the existence of the limit $\lim_{t \to 0} \lambda(t)g^{-1}x$ implies $\langle \theta, \lambda \rangle \geq 0$. This limit exists if and only if $\lim_{t \to 0} g\lambda(t)g^{-1}x$ exists. Moreover all 1-parameter subgroups of $G$ are conjugate to 1-parameter subgroups of $T$, so we have $x\in \bigcap_{g\in G}gV_\theta^{ss}(T)$ precisely when $x \in V^{ss}_\theta(G)$.

\end{proof}

\subsection{Affine VGIT}Let $G$ be a complex reductive group and let $V$ be a representation of $G$.
Let $\Char(G)$ denote the character group of $G$, and set $\Char(G)_\QQ := \Char(G) \otimes \QQ$. The definition of the pairing \eqref{eq:pairing} between characters and 1-parameter subgroups extends linearly to a $\QQ$-valued pairing between elements of $\Char(G)_\QQ$ and 1-parameter subgroups of $G$. With this, the definition of $V^{ss}_\theta(G)$ in terms of the numerical criterion makes sense for $\theta \in \Char(G)_\QQ$.

The \emph{semistable cone} of $(V, G)$ is the set
\[\Sigma(V, G) := \{ \theta \in \Char(G)_\QQ \mid V^{ss}_\theta(G) \neq \emptyset\}\]
and the \emph{walls} of $(V, G)$ are the set 
\[
\omega(V, G) := \{\theta \in \Sigma(V, G) \mid \text{some }x \in V^{ss}_\theta(G) \text{ has positive dimensional stabilizer}\}.
\]
The set $\omega(V, G)$ will contain the boundary of $\Sigma(V, G)$ but can also contain interior points, and may even contain a cone of dimension equal to the dimension of $\Sigma(V, G)$.

\begin{remark}
If $G$ acts on $V$ with finite kernel, the walls of $(V, G)$ are also the locus of $\theta \in \Sigma(V, G)$ such that $V^{ss}_\theta(G)$ has a strictly semistable point (in the sense of GIT).
\end{remark}

If $G$ is abelian, the sets $\Sigma(V, G)$ and $\omega(V, G)$ can be written down explicitly. Let $n$ be the dimension of $V$ and fix a weight basis for the $G$-action, so the action of $g \in G$ on $(x_1, \ldots, x_n) \in V$ can be written as
\[
g \cdot (x_1, \ldots, x_n) = (\xi_1(g) x_1, \ldots, \xi_n(g) x_n)
\]
for some characters $\xi_1, \ldots, \xi_n$ of $G$. For $x \in V$, define 
\[\supp(x) := \{i \in \{1, \ldots, n\}  \mid x_i \neq 0\}.\]
The following lemma is well-known.

\begin{lemma}\label{lem:toric}
Let $G = \Gm^k$ act on $V$ with weights $\xi_1, \ldots, \xi_n$. Then
for any $\theta \in \Char(G)_\QQ$, we have $V^{ss}_\theta(G) = \{x \in V \mid \theta \in \Cone(\{\xi_i\}_{i \in \supp(x)}) \}.$
Moreover, we have equalities 
\[\Sigma(V, G) = \Cone(\{\xi_i\}_{i=1}^n) \quad \quad \quad\text{and} \quad \quad \quad \omega(V, G) = \bigcup_{\substack{I \subseteq \{1, \ldots, n\}\\|I|\leq k-1 }} \Cone(\{\xi_i\}_{i \in I}).\]
\end{lemma}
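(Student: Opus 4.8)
The plan is to prove the three assertions in turn, the first directly from the numerical criterion and the last two as consequences.

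First I would establish the pointwise characterization $V^{ss}_\theta(G) = \{x \in V \mid \theta \in \Cone(\{\xi_i\}_{i \in \supp(x)})\}$. Fix $x \in V$ and write $S = \supp(x)$. A one-parameter subgroup $\lambda$ of $G = \Gm^k$ is an element of the cocharacter lattice, and for $i \in S$ the $i$th coordinate of $\lambda(t) \cdot x$ is $t^{\langle \xi_i, \lambda\rangle} x_i$; the coordinates not in $S$ are zero. Hence $\lim_{t \to 0} \lambda(t) \cdot x$ exists if and only if $\langle \xi_i, \lambda\rangle \geq 0$ for all $i \in S$. So Definition \ref{def:ss} says $x$ is semistable precisely when: for every cocharacter $\lambda$ with $\langle \xi_i, \lambda\rangle \geq 0$ for all $i \in S$, we also have $\langle \theta, \lambda\rangle \geq 0$. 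By linearity this extends to all $\lambda \in \Char(G)_\QQ^\vee$. This is exactly the statement that $\theta$ lies in the double dual $\Cone(\{\xi_i\}_{i \in S})^{\vee\vee}$, and by the duality theorem for polyhedral cones (the cone is finitely generated, hence closed) this double dual equals $\Cone(\{\xi_i\}_{i \in S})$ itself. This gives the first equality. The main (small) obstacle here is being careful that the numerical criterion as stated quantifies over integral cocharacters while cone duality wants rational ones — but scaling a cocharacter by a positive integer does not change the sign of any pairing, so the two conditions agree.

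For $\Sigma(V, G)$: by definition $\theta \in \Sigma(V,G)$ iff $V^{ss}_\theta(G) \neq \emptyset$, iff there exists $x$ with $\theta \in \Cone(\{\xi_i\}_{i \in \supp(x)})$. Taking $x$ with $\supp(x) = \{1, \ldots, n\}$ (a generic point) shows $\Cone(\{\xi_i\}_{i=1}^n) \subseteq \Sigma(V,G)$, and conversely every $\Cone(\{\xi_i\}_{i \in \supp(x)})$ is contained in $\Cone(\{\xi_i\}_{i=1}^n)$, giving the reverse inclusion.

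For $\omega(V,G)$: the stabilizer in $G$ of a point $x$ is $\bigcap_{i \in \supp(x)} \ker(\xi_i)$, which is positive-dimensional exactly when the characters $\{\xi_i\}_{i \in \supp(x)}$ fail to span $\Char(G)_\QQ$, i.e. when $\dim \mathrm{span}(\{\xi_i\}_{i \in \supp(x)}) \leq k-1$. Thus $\theta \in \omega(V,G)$ iff there is some $x$ with $\theta \in \Cone(\{\xi_i\}_{i \in \supp(x)})$ and $\dim \mathrm{span}(\{\xi_i\}_{i \in \supp(x)}) \leq k-1$. This shows $\omega(V,G) \subseteq \bigcup_{|I| \leq k-1, \, \mathrm{span}(\{\xi_i\}_{i\in I}) \neq \Char(G)_\QQ} \Cone(\{\xi_i\}_{i \in I})$, and since for such $I$ one can choose $x$ with $\supp(x) = I$, the inclusion is an equality over that index set. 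To match the stated formula, which indexes over all $I$ with $|I| \leq k-1$, note that such an $I$ automatically has $\dim \mathrm{span}(\{\xi_i\}_{i \in I}) \leq k-1 < k$, so the span condition is redundant; the only subtlety is that for the direction $\omega(V,G) \supseteq \bigcup_{|I|\leq k-1}\Cone(\{\xi_i\}_{i\in I})$ one must reduce to the case $\theta \in \Cone(\{\xi_i\}_{i \in I})$ with $|I| \leq k-1$ but possibly $\theta \notin \Cone$ of any proper subset — here Carathéodory (Lemma \ref{lem:car}) guarantees $\theta$ already lies in $\Cone(J)$ for some $J$ with $|J| \leq \dim\mathrm{span}(\{\xi_i\}_{i\in I}) \leq k-1$, so we may take $\supp(x) = J$ and the corresponding point has positive-dimensional stabilizer. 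I expect this bookkeeping around Carathéodory and the span condition to be the only real point requiring care; everything else is immediate from the first equality.
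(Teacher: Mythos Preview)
Your proposal is correct and follows essentially the same route as the paper: both arguments reduce the numerical criterion to cone duality (you phrase it as the double-dual identity, the paper as $C_x^\vee \subseteq \Cone(\{\theta\})^\vee$), and both handle $\omega(V,G)$ via the stabilizer/span condition plus Carath\'eodory. One small organizational slip: your claimed inclusion $\omega(V,G) \subseteq \bigcup_{|I|\leq k-1,\ldots}\Cone(\{\xi_i\}_{i\in I})$ does not follow until \emph{after} you invoke Carath\'eodory (since $\supp(x)$ may be large even when its span is small), and the sentence beginning ``the only subtlety is that for the direction $\omega(V,G) \supseteq \ldots$'' is really addressing the $\subseteq$ direction---but all the right pieces are present.
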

\begin{proof}
For $x \in V$ let $C_x \subset \chi(G)_\QQ$ denote the cone $\Cone(\{\xi_i\}_{i \in \supp(x)})$ and let $C_x^\vee$ denote its dual.
If $\lambda: \Gm \to G$ is a 1-parameter subgroup, then the following are equivalent:
\begin{enumerate}[label=(\alph*)]
\item $\lim_{t \to 0} \lambda(t) x$ exists.
\item $\langle \xi_i, \lambda \rangle \geq 0$ for all $i \in \supp(x)$.
\item $\lambda \in C_x^\vee$.
\end{enumerate}
It follows that $x$ is in $V^{ss}_\theta(G)$ if and only if $C_x^\vee \subseteq \Cone(\{\theta\})^\vee$, where we note that $\Cone(\{\theta\})$ is a halfspace determined by $\theta$. But this is equivalent to $\theta \in C_x$, proving the formula for $V^{ss}_\theta(G)$.

The formula for $\Sigma(V, G)$ follows immediately from the formula for $V^{ss}_\theta(G)$. To obtain the formula for $\omega(V, G)$, observe that $\theta$ is in $\omega(V, G)$ if and only if there is a pair $(x, \lambda)$ with $x \in V^{ss}_\theta(G)$ and $\lambda$ a 1-parameter subgroup of $G$ with image in the stabilizer of $x$. This is equivalent to requiring 
\begin{equation}\label{eq:partway}
\theta \in C_x \quad \quad \text{and} \quad \quad \langle \xi_i, \lambda \rangle  = 0 \text{ for all } i \in \supp(x).
\end{equation}
Suppose that for some $\theta$ we can find $(x, \lambda)$ such that these conditions hold. Then $C_x$ is contained in the $k-1$-dimensional subspace of $\Char(G)_\QQ$ orthogonal to $\lambda$. By Carath\'eodory's theorem (Lemma \ref{lem:car}) we can find a subset $I \subseteq \supp(x)$ of size 
at most
$k-1$ such that $\theta \in \Cone(I)$.

Conversely, suppose $\theta \in \Cone(\{\xi_i\}_{i \in I})$ for some $I \subseteq \{1, \ldots, n\}$ of size 
at most $k-1$. Then define $x \in V$ by $x_i = 1$ if $i \in I$ and $x_i=0$ otherwise, and choose $\lambda$ to be any rational vector in $\Hom(\Gm,G)_{\QQ}$ perpendicular to the subspace spanned by $\Cone(\{\xi_i\}_{i \in I})$. Then $(x, \lambda)$ satisfy the conditions \eqref{eq:partway}.

\end{proof}

\begin{remark}
In Lemma \ref{lem:toric}, if $n$ is greater than or equal to $k$, it is easy to see that in the formula for $\omega(V, G)$ we can replace $|I| \leq k-1$ with $|I|=k$. If $n< k$ this is only true if we allow $I$ to have repeated elements. We define a \emph{wall} for $(V, \Gm^k)$ to be a cone in $\Char(\Gm^k)$ generated by $\xi_1, \ldots, \xi_{k-1}$, where the $\xi_i$ are weights of $V$ (possibly non-distinct).
The union of these individual walls is the set $\omega(V, G)$.
\end{remark}

\begin{proposition}\label{prop:toric}
If $G$ is abelian, the unstable locus of $V$ is given by
\[
V^{us}_\theta(G) = \bigcup_{\lambda \text{ s.t. } \langle \theta, \lambda \rangle < 0} V^{\lambda \geq 0} \quad \quad \text{where} \quad \quad V^{\lambda \geq 0} := \{(x_i)_{i=1}^n \mid x_i=0 \text{ if } \langle \xi_i, \lambda \rangle < 0 \}.
\]
In particular the irreducible components of $V^{us}_\theta(G)$ are each of the form $V^{\lambda \geq 0}$ for some $\lambda$.
\end{proposition}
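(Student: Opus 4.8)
The plan is to unwind Definition \ref{def:ss} using the weight decomposition of $V$, exactly as in the proof of Lemma \ref{lem:toric}. Fix a weight basis so that $g\cdot(x_1,\dots,x_n)=(\xi_1(g)x_1,\dots,\xi_n(g)x_n)$. The first thing I would record is the elementary observation already used in Lemma \ref{lem:toric}: for a $1$-parameter subgroup $\lambda$ of $G$, the limit $\lim_{t\to 0}\lambda(t)\cdot x$ exists if and only if $\langle \xi_i,\lambda\rangle\geq 0$ for every $i\in\supp(x)$, and this is precisely the condition $x\in V^{\lambda\geq 0}$ (equivalence (a)$\Leftrightarrow$(b) there, plus the translation (b)$\Leftrightarrow$ $x\in V^{\lambda\geq 0}$).

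With this in hand, $x$ is \emph{un}stable, i.e. $x\notin V^{\sst}_\theta(G)$, exactly when there is a $1$-parameter subgroup $\lambda$ with $\langle\theta,\lambda\rangle<0$ for which $\lim_{t\to 0}\lambda(t)\cdot x$ exists, i.e. for which $x\in V^{\lambda\geq 0}$. Taking the union over all such $\lambda$ yields the displayed formula. Two small points are worth a sentence: if $\theta$ is merely rational one clears denominators; and replacing $\lambda$ by a positive multiple changes neither the sign of $\langle\theta,\lambda\rangle$ nor the set $V^{\lambda\geq 0}$, so restricting the union to integral $\lambda$ (as in Definition \ref{def:ss}) does not change it.

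For the last sentence I would note that $V^{\lambda\geq 0}$ depends only on the subset $\{\,i : \langle\xi_i,\lambda\rangle<0\,\}\subseteq\{1,\dots,n\}$, of which there are finitely many; hence the union is in fact a \emph{finite} union, and each $V^{\lambda\geq 0}$ is a coordinate subspace of $V$, in particular an irreducible closed subvariety. Since an irreducible closed subset contained in a finite union of closed subsets must lie in one of them, the maximal irreducible closed subsets of $V^{us}_\theta(G)$ are exactly the inclusion-maximal members of the finite family $\{V^{\lambda\geq 0} : \langle\theta,\lambda\rangle<0\}$; these are its irreducible components, and each has the asserted form.

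This is mostly bookkeeping with the numerical criterion, so I do not expect a serious obstacle. The one point requiring a moment's care is the finiteness of the union — needed so that "irreducible component" is meaningful and is realized by some $V^{\lambda\geq 0}$ — which is immediate once one observes that $V^{\lambda\geq 0}$ is determined by a subset of the weight indices rather than by $\lambda$ itself.
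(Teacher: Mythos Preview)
Your proof is correct and follows essentially the same approach as the paper. The paper derives the formula for $V^{us}_\theta(G)$ by citing the semistable locus description in Lemma~\ref{lem:toric}, whereas you go directly to the numerical criterion in Definition~\ref{def:ss}; these amount to the same thing since the key equivalence (a)$\Leftrightarrow$(b) you invoke is exactly what was established in the proof of Lemma~\ref{lem:toric}. For the irreducible components, the paper cites \cite[0G2Y]{stacks-project} after noting finiteness, which encapsulates precisely the elementary argument you spell out (a finite union of irreducible closed sets has as irreducible components the inclusion-maximal members).
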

\begin{proof}
The formula for $V^{us}_\theta(G) $ follows from the formula for $V^{ss}_\theta(G)$ in Lemma \ref{lem:toric}. Since there are only finitely many possible subspaces that can equal $V^{\lambda \geq 0}$ for some $\lambda$ (namely the coordinate subspaces of $V$), we can write $V^{us}_\theta(G)$ as a union of finitely many $V^\lambda$, and then the statement about irreducible components follows from \cite[0G2Y]{stacks-project}.
\end{proof}

\subsection{VGIT and abelianization}
Let $G$ be a \textit{connected} complex reductive group, let $T \subseteq G$ be a maximal torus, and let $W$ be the associated Weyl group. Restriction of characters induces a linear map
\begin{equation}\label{eq:embed-characters}
\chi(G)_\QQ \to \chi(T)_\QQ.
\end{equation}

The authors are grateful to Loren Spice \cite{spice} for explaining the proof of the following lemma.

\begin{lemma}\label{lem:spice}
The linear map \eqref{eq:embed-characters} is injective with image $\Char(T)_\QQ^W$.
\end{lemma}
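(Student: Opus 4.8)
The plan is to establish the two claimed properties — injectivity of restriction, and identification of the image with $\Char(T)_\QQ^W$ — more or less separately, using the conjugacy theory of maximal tori and the normalizer description of the Weyl group.

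For injectivity: a character $\theta \in \Char(G)$ that restricts to the trivial character of $T$ must be trivial on the subgroup generated by $\bigcup_{g \in G} gTg^{-1}$. Since $G$ is connected reductive, every element of $G$ lies in a maximal torus, and all maximal tori are conjugate, so this union is all of $G$; hence $\theta = 0$. (At the level of $\QQ$-vector spaces, a $\QQ$-character killing $T$ still kills the Zariski-dense set $\bigcup_g gTg^{-1}$, so it is zero.) This shows $\eqref{eq:embed-characters}$ is injective.

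For the image: first I would check the inclusion $\mathrm{image} \subseteq \Char(T)_\QQ^W$. The Weyl group $W = N_G(T)/T$ acts on $\Char(T)_\QQ$ by conjugation; if $\theta$ is the restriction of a character of $G$ and $n \in N_G(T)$, then for $t \in T$ we have $(n \cdot \theta)(t) = \theta(n^{-1} t n) = \widetilde\theta(n^{-1} t n) = \widetilde\theta(t) = \theta(t)$ since $\widetilde\theta$ is a character of the whole group $G$ and hence conjugation-invariant. So the restricted character is $W$-fixed. The reverse inclusion $\Char(T)_\QQ^W \subseteq \mathrm{image}$ is the substantive point: given $\theta \in \Char(T)^W$ (after clearing denominators we may assume $\theta$ is an honest character, and since the map is $\QQ$-linear it suffices to hit a sublattice of finite index), I would extend $\theta$ to a character of $G$. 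The standard route is via the semisimple/torus decomposition from Theorem~\ref{thm:structure}: pull $\theta$ back along $f: H \times D \to G$ to a $W$-invariant character of $T \times D$; its restriction to $T$ is a $W$-invariant character of the maximal torus of the semisimple group $H$, and a $W$-invariant (hence dominant-and-antidominant, i.e.\ vanishing on all coroots) character of a semisimple group's torus is torsion — so on a finite-index sublattice $\theta$ is pulled back from $D$ alone, which is the central torus, and characters of $D$ extend to $G$ up to isogeny. Equivalently, and perhaps cleanest: $W$-invariance of $\theta$ means $\langle \theta, \alpha^\vee\rangle = 0$ for every coroot $\alpha^\vee$ (since the simple reflections $s_\alpha$ act by $\theta \mapsto \theta - \langle\theta,\alpha^\vee\rangle\alpha$), so $\theta$ annihilates the subtorus generated by the coroot subgroups, which over $\QQ$ is the derived group $\sD(G) \cap T$; hence $\theta$ factors rationally through $T/(T \cap \sD(G))$, a quotient torus that is isogenous to the abelianization $G/\sD(G)$, and characters of the latter are exactly characters of $G$.

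**The main obstacle** is the surjectivity onto $\Char(T)_\QQ^W$: one must genuinely use that the span of the coroots is the character-theoretic "semisimple part" and that the complementary direction comes from a central torus whose characters extend to $G$. I would make this rigorous by working rationally throughout — replacing lattices by their $\QQ$-spans so that "isogenous" becomes "isomorphic" — and by invoking Theorem~\ref{thm:structure} to reduce to the product $H \times D$, where the factorization $\Char(T \times D)_\QQ = \Char(T)_\QQ \oplus \Char(D)_\QQ$ and the vanishing of $\Char(T)_\QQ^{W}$ for semisimple $H$ (no nonzero $W$-invariant rational character) make the claim transparent. Everything else (injectivity, the easy inclusion) is short.
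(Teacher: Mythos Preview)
Your proposal is correct and, in its ``cleanest'' formulation at the end, follows essentially the same route as the paper: the paper proves the lemma by establishing the isomorphism $T/(T\cap\sD(G)) \xrightarrow{\sim} G/\sD(G)$ (so that $\Char(G) = \Char(G/\sD(G)) = \Char(T/(T\cap\sD(G)))$), and then shows that $\theta\in\Char(T)$ is $W$-invariant if and only if $\langle\theta,\alpha^\vee\rangle=0$ for all simple $\alpha$, i.e.\ if and only if $\theta$ kills $T\cap\sD(G)$ (using that this intersection is generated by the images of the simple coroots). That is precisely your final paragraph.

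Two minor remarks. First, in your injectivity argument the claim that ``every element of $G$ lies in a maximal torus'' is false as stated (non-identity unipotent elements do not); your parenthetical density argument is the correct fix, and you should lead with it. Second, the paper does not separate the proof into injectivity, easy inclusion, and hard inclusion as you do: the single isomorphism $T/(T\cap\sD(G))\simeq G/\sD(G)$ delivers $\Char(G)\simeq\Char(T)^W$ in one stroke, so injectivity is automatic and your alternative reduction to $H\times D$ via Theorem~\ref{thm:structure}, while valid, is not needed.
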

\begin{proof}
The identification $\Char(G)\simeq \Char(T)^W$ essentially follows from an isomorpihsm
\[
T/(T \cap \sD(G)) \to G/\sD(G).
\]
This morphism is clearly injective, and it is surjective because $G$ is generated by the derived subgroup $\sD(G)$ and its center (see e.g. \cite[Thm 3.2.2]{Conrad}). It follows that
\[
\Hom(G, \Gm) = \Hom(G/\sD(G), \Gm) = \Hom(T/(T \cap \sD(G)), \Gm).
\]

It remains to show that $\theta \in \Char(T)$ is Weyl-invariant if and only if it vanishes on $T \cap \sD(G)$. But $\theta$ is Weyl invariant if and only if $s_\alpha(\theta) = \theta$ for all simple roots $\alpha$ (where $s_\alpha$ is the reflection along $\alpha$), if and only if $\langle \theta, \alpha^\vee \rangle = 0$, if and only if the image of the 1-parameter subgroup $\alpha^\vee$ is contained in $\ker(\theta)$. Since $T \cap \sD(G)$ is generated by the images of $\alpha^\vee$ as $\alpha$ ranges over all simple roots (see e.g. \cite[Example 2.1]{Conrad2}) we are done.

\end{proof}

Let $V$ be a representation of $G$. By restriction we have also a representation of $T$.
Since by the lemma $\Char(G)_\QQ  = \Char(T)_\QQ^W$, it is natural to ask whether $\Sigma(V, G) = \Sigma(V, T)^W$ and likewise whether $\omega(V, G) = \omega(V, T)^W$. It is easy to see that containment holds in one direction (see e.g. \cite[Prop 2.17]{Mathieu} or \cite[Prop 2.1]{HL-Sam}).

\begin{lemma}\label{lem:abelianization}
Let $G$ be a connected complex reductive group with maximal torus $T$ and Weyl group $W$, and let $V$ be a representation of $G$. Then
\[
\Sigma(V, G) \subseteq \Sigma(V, T)^W \quad \quad \text{and} \quad \quad \omega(V, G) \subseteq \omega(V, T)^W.
\]
\end{lemma}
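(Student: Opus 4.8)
The plan is to prove both containments by direct comparison of the defining numerical criteria, using Lemma \ref{lem:torus-to-group} as the main bridge between $G$-semistability and $T$-semistability. Throughout, recall from Lemma \ref{lem:spice} that the restriction map $\Char(G)_\QQ \hookrightarrow \Char(T)_\QQ$ identifies $\Char(G)_\QQ$ with $\Char(T)_\QQ^W$, so for $\theta \in \Char(G)_\QQ$ it makes sense to speak of $V^{ss}_\theta(T)$ and to ask whether a point is in $\omega(V,T)$.

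For the first containment, suppose $\theta \in \Sigma(V,G)$, so $V^{ss}_\theta(G) \neq \emptyset$. First I would note that $\theta$, viewed in $\Char(T)_\QQ$, is automatically $W$-invariant by Lemma \ref{lem:spice}, so it remains only to show $V^{ss}_\theta(T) \neq \emptyset$. But Lemma \ref{lem:torus-to-group} gives $V^{ss}_\theta(G) = \bigcap_{g \in G} g V^{ss}_\theta(T) \subseteq V^{ss}_\theta(T)$, so nonemptiness of the left side forces nonemptiness of the right. Hence $\theta \in \Sigma(V,T)^W$.

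For the second containment, suppose $\theta \in \omega(V,G)$; again $\theta$ is $W$-invariant, so I must produce a point of $V^{ss}_\theta(T)$ with positive-dimensional stabilizer in $T$. By definition of $\omega(V,G)$ there is $x \in V^{ss}_\theta(G)$ whose stabilizer in $G$ is positive-dimensional; its identity component is a positive-dimensional subgroup of $G$, hence contains a $1$-parameter subgroup $\lambda: \Gm \to G$ fixing $x$. Now $\lambda$ is conjugate in $G$ to a $1$-parameter subgroup of $T$: write $\lambda = g \mu g^{-1}$ with $\mu: \Gm \to T$. Then $\mu$ fixes $g^{-1} x$, so $g^{-1}x$ has positive-dimensional stabilizer in $T$. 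It remains to check $g^{-1}x \in V^{ss}_\theta(T)$, and this is immediate from $x \in V^{ss}_\theta(G) = \bigcap_{h \in G} h V^{ss}_\theta(T)$, which in particular says $x \in g V^{ss}_\theta(T)$, i.e. $g^{-1} x \in V^{ss}_\theta(T)$. Therefore $\theta \in \omega(V,T)$, and since $\theta \in \Char(T)_\QQ^W$ we get $\theta \in \omega(V,T)^W$.

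The argument is essentially bookkeeping once Lemma \ref{lem:torus-to-group} and Lemma \ref{lem:spice} are in hand; the only mild subtlety is the passage from ``positive-dimensional stabilizer'' to ``contains a $1$-parameter subgroup,'' which uses that a positive-dimensional algebraic group over $\CC$ always admits a nontrivial $1$-parameter subgroup (apply this to the reductive part, or to a maximal torus, of the identity component of the stabilizer) — this is where I would be slightly careful, though it is standard. Note also that the reverse inclusions are genuinely false in general, which is the whole point of the paper, so no symmetry shortcut is available.
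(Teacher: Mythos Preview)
Your argument for the first containment is fine and matches the paper's. The second containment, however, has a genuine gap.

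You write that the identity component of the stabilizer of $x$ ``contains a $1$-parameter subgroup $\lambda: \Gm \to G$,'' and in the closing remark you suggest applying this to ``the reductive part, or to a maximal torus, of the identity component.'' But a positive-dimensional algebraic group need \emph{not} admit a nontrivial homomorphism from $\Gm$: the additive group $\mathbb{G}_a$ is the basic counterexample, and more generally any unipotent group has trivial maximal torus. Stabilizers of semistable points can be unipotent---for instance, $SL_2$ acting on $\CC^2$ with $\theta = 0$ has every point semistable, and the stabilizer of $e_1$ is $\mathbb{G}_a$. So your passage from ``positive-dimensional stabilizer'' to ``contains a $1$-parameter subgroup'' fails in general, and neither the ``reductive part'' nor the ``maximal torus'' salvage it when both are trivial.

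The paper's proof addresses exactly this point: starting from your $x$, it passes to a point $x'$ in the closure of $G\cdot x$ inside $V^{ss}_\theta(G)$ whose orbit is closed (a polystable point). Upper semicontinuity ensures $\dim G_{x'} \geq \dim G_x > 0$, and Matsushima's criterion guarantees that $G_{x'}$ is reductive, hence contains a nontrivial torus. From there the conjugation step and the use of Lemma~\ref{lem:torus-to-group} proceed exactly as you wrote. So the fix is not large, but it is essential: you need to replace $x$ by a polystable point before you can extract a $1$-parameter subgroup.
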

\begin{proof}
The first containment is an immediate consequence of Lemma \ref{lem:torus-to-group}. For the second containment, 
suppose $x \in V^{ss}_\theta(G)$ has a positive dimensional stabilizer. The $G$-orbit of $x$ may not be closed in $V^{ss}_\theta(G)$; if not choose $x'$ in the closure of the orbit of $x$ whose orbit is indeed closed in $V^{ss}_\theta(G)$. By upper semicontinuity of fiber dimension the stabilizer of $x'$ is still positive dimensional. 
Because the orbit of $x'$ is closed in $V^{ss}_\theta(G)$, this point is polystable, hence has a reductive stabilizer (this is a consequence of Matsushima's Criterion). 
Since the stabilizer is positive dimensional it contains a nontrivial maximal torus $S_{x'}$. There is some $g \in G$ for which $gS_{x'}g^{-1} \subseteq T$, and $gS_{x'}g^{-1}$ is the stabilizer of $gx'$. But $gx'$ is also in $V^{ss}_\theta(G) $, hence in $V^{ss}_\theta(T)$, and so $\theta$ is in $\omega(V, T)$.
\end{proof}

The converse to this lemma is not true. Our first example is rather na\"ive, but as we will see in Proposition \ref{prop:oplus}, it captures exactly 
how the equality $\Sigma(V, G) \subseteq \Sigma(V, T)^W $ can fail in general.

\begin{example}\label{ex:grassmannian}
Let $G = GL(n)$, let $\theta$ be the determinant character, and let $V = \AA^n$ be the standard representation with coordinates $x_1, \ldots, x_n$. Let $T\subseteq G$ be the diagonal matrices. Then $V^{us}_\theta(T)$ is the union of the hyperplanes $x_i=0$ for $i=1, \ldots, n$, and in particular $V^{ss}_\theta(T) \neq \emptyset$. By Lemma \ref{lem:torus-to-group} we have
\[
V^{us}_\theta(G) = G \cdot V^{us}_\theta(T),
\]
but this is all of $V$. So $V^{ss}_\theta(G) = \emptyset$ and 
\[
\Sigma(V, G) \subsetneq \Sigma(V, T)^W.
\]

In general, if $r$ is a positive integer, $(V^{\oplus r})^{us}_\theta(T)$ can be identified with $n \times r$ matrices with one row equal to zero. The group $GL(n)$ acts by left multiplication and $G \cdot (V^{\oplus r})^{us}_\theta(T)$ is the locus of matrices with low row rank. So if $r \geq n$ then $(V^{\oplus r})^{ss}_\theta(T)$ is no longer empty, and in fact the GIT quotient $V^{\oplus r} \sslash_\theta G$ is the Grassmannian variety of $n$-planes in $\AA^r$.

\end{example}

Our second example shows that even the containment 
\[\omega(V, G)\; \subseteq \;\omega(V, T)^W \cap \Sigma(V, G)\]
can be strict.

\begin{example}\label{ex:quiver1}
A quiver with dimension vector defines a representation of a reductive group (see Example \ref{ex:quiver}). Here we define $(V, G)$ to be the representation  defined by the quiver with dimension vector
\[
\begin{tikzpicture}[node distance=2cm,->,>=stealth']
\tikzstyle{gauge}=[circle,minimum size=6mm, draw=black]
\tikzstyle{nongauge}=[rectangle,minimum size=6mm, draw=black]

\node[gauge] (1) {$2$};
\node[gauge] (2) [below right of=1] {$1$};
\node[gauge] (3) [above right of=2] {$1$};

\path (3) edge node [above] {} (1)
(2) edge[bend left] node [left ] {} (1)
(2) edge[bend left] node [below right] {} (3)
(1) edge[bend left] node [below left] {} (2)
(3) edge[bend left] node [right] {} (2);
\end{tikzpicture}
\]
where the integers on the vertices indicate their dimension. In particular 
 $V \simeq \AA^8$ 
 and
\[G = \big( GL(2) \times GL(1) \times GL(1)\big)/\Gm.\]
Let $T$ be the quotient of the diagonal subgroup $T'$ of $GL(2) \times GL(1) \times GL(1)$ by $\Gm$, so $T \subseteq G$ is a maximal torus.

The toric loci $\Sigma(V, T)^W$ and $\omega(V, T)^W$ can be computed with Lemma \ref{lem:toric} as follows. We order the vertices clockwise, beginning with the vertex labeled ``2;" this choice induces a ``standard'' basis of projection characters of $T'$. We choose 
\[
q_1 = (1,0,0,-1) \quad \quad \quad q_2 = (0, 1, -1) \quad \quad \quad q_3 = (0, 0, 1, -1)
\]
for a basis of characters of $T$ (note that each $q_i$ is indeed trivial on the image of $\Gm$, hence a character of the quotient $T = T'/\Gm$). The Weyl group acts on $\chi(T)_\QQ$ by permuting $q_1$ and $q_2$, and so Weyl-invariant elements of $\Char(T)_\QQ$ are given by tuples $(s, s, t)$. In this $(s, t)$ basis, $\Sigma(V, T)^W$ is the entire plane and $\omega(V, T)^W$ is the union of dashed and solid rays in the figure below. The diagonal rays are generated by $(1, -1)$ and $(1, -2)$.
\begin{equation*}
\begin{tikzpicture}[scale=.75]
\draw[gray, fill=gray, opacity=.3] (0,-1.8) rectangle (1.8,1.8);
\draw[<->, dashed] (-2,0)--(2,0);
\node at (2.1, .4) {$s$};
\node at ( -.4, 1.6) {$t$};
\draw[<->] (0,-2)--(0,2);
\draw [->] (0, 0)--(2, -2);
\draw [->, dashed] (0, 0)--(1, -2);
\end{tikzpicture}
\end{equation*}

On the other hand, one can compute, either directly from the definition \ref{def:ss} or using \cite{FPW}, that $\Sigma(V, G)$ is the shaded half plane and $\omega(V, G)$ is the union of the solid rays in the same figure. So $\omega(V, G)$ is properly contained in $\omega(V, T)^W$, even after intersecting with $\Sigma(V, T)^W$.
\end{example}

The next result says that the converse to Lemma \ref{lem:abelianization} \emph{does} hold ``eventually.'' We note that the bound on $r$ in the lemma is far from tight, as is shown by the example of the Grassmannian \ref{ex:grassmannian}.

\begin{proposition}\label{prop:oplus}
Let $(V, G)$ be a representation. If $r >\dim G$ then 
\[
\Sigma(V^{\oplus r}, G) = \Sigma(V^{\oplus r},\, T)^W.
\]
\end{proposition}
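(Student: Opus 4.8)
The plan is to prove the nontrivial inclusion $\Sigma(V^{\oplus r}, T)^W \subseteq \Sigma(V^{\oplus r}, G)$, since the reverse inclusion is Lemma~\ref{lem:abelianization}. So fix $\theta \in \Sigma(V^{\oplus r}, T)^W = \Char(G)_\QQ$; we must produce a point of $(V^{\oplus r})^{ss}_\theta(G)$. By Lemma~\ref{lem:torus-to-group}, $(V^{\oplus r})^{us}_\theta(G) = G \cdot (V^{\oplus r})^{us}_\theta(T)$, so it suffices to show this $G$-sweep is a \emph{proper} closed subset of $V^{\oplus r}$, i.e.\ has dimension strictly less than $r \dim V$. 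The key dimension count is: if $Z \subseteq V$ is a $T$-invariant closed subset, then $\dim(G \cdot (Z^{\oplus r})) \le \dim G + \dim(Z^{\oplus r}) = \dim G + r\dim Z$ (the orbit map contributes at most $\dim G$). Applying this to the irreducible components of $(V)^{us}_\theta(T)$ — which by Proposition~\ref{prop:toric} are coordinate subspaces $V^{\lambda \ge 0}$, each a proper subspace since $V^{ss}_\theta(T) \ne \emptyset$ forces every such $V^{\lambda \ge 0}$ to omit at least one coordinate — we get that each component of $(V^{\oplus r})^{us}_\theta(T)$ has the form $W^{\oplus r}$ with $W \subsetneq V$ a proper coordinate subspace, hence $\dim W \le \dim V - 1$.

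Therefore each irreducible component of $G \cdot (V^{\oplus r})^{us}_\theta(T)$ has dimension at most
\[
\dim G + r(\dim V - 1) = \dim G - r + r\dim V < r \dim V,
\]
where the strict inequality is exactly the hypothesis $r > \dim G$. Since $(V^{\oplus r})^{us}_\theta(G)$ is a finite union of such components (finiteness because there are only finitely many coordinate subspaces $V^{\lambda\ge 0}$, cf.\ Proposition~\ref{prop:toric}, and each $G$-sweep of an irreducible variety is irreducible), it is a proper closed subset of the irreducible variety $V^{\oplus r}$. Hence its complement $(V^{\oplus r})^{ss}_\theta(G)$ is nonempty, giving $\theta \in \Sigma(V^{\oplus r}, G)$.

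The main point requiring care — really the only nonformal step — is the justification that $\dim(G \cdot S) \le \dim G + \dim S$ for $S$ a closed subvariety: this follows because $G \cdot S$ is the image of the action morphism $G \times S \to V^{\oplus r}$, whose source has dimension $\dim G + \dim S$, and images under morphisms of varieties do not increase dimension. One should also make sure the $W \subsetneq V$ claim is correctly extracted: a point $x \in V^{ss}_\theta(T)$ lies in no component $V^{\lambda \ge 0}$ of the $T$-unstable locus, and since the components are coordinate subspaces, each must be cut out by setting at least one coordinate to zero, so $\dim V^{\lambda \ge 0} \le \dim V - 1$; this uses that $V^{ss}_\theta(T) \ne \emptyset$, which holds because $\theta \in \Sigma(V, T)^W \subseteq \Sigma(V^{\oplus r}, T)^W$ — or more directly because $\theta \in \Sigma(V^{\oplus r},T)^W$ and weights of $V^{\oplus r}$ are the weights of $V$. (Strictly, if $V^{ss}_\theta(T)$ were empty we would instead be trying to show $(V^{\oplus r})^{ss}_\theta(G)=\emptyset$, consistent with $\Sigma(V^{\oplus r},T)^W$ not containing $\theta$; but the hypothesis $\theta \in \Sigma(V^{\oplus r},T)^W$ rules this out.) Everything else is the bookkeeping of assembling these dimension bounds over the finitely many components.
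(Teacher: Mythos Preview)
Your proof is correct and follows essentially the same route as the paper's: reduce via Lemma~\ref{lem:torus-to-group} to showing $G\cdot (V^{\oplus r})^{us}_\theta(T)\subsetneq V^{\oplus r}$, identify the irreducible components of $(V^{\oplus r})^{us}_\theta(T)$ as $Y^{\oplus r}$ for $Y$ a proper coordinate subspace of $V$ (using Proposition~\ref{prop:toric} and the nonemptiness of $V^{ss}_\theta(T)$), and then bound $\dim(G\cdot Y^{\oplus r})\le \dim G + r\dim Y < r\dim V$. The paper phrases the final step as a contrapositive chain of inequalities yielding $r\le\dim G$, but the content is identical.
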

\begin{proof}
By Lemma \ref{lem:abelianization} it is enough to show $\Sigma(V^{\oplus r},\, T)^W \subseteq \Sigma(V^{\oplus r},\, G)$. For this let $\theta$ be an element of the left hand side; we must show that $G \cdot(V^{\oplus r})^{us}_\theta(T)$ does not contain $V^{\oplus r}$ for $r>\dim(G)$.

We  do this by a dimension count: if (arguing by contrapositive) the orbit $G \cdot(V^{\oplus r})^{us}_\theta(T)$ contains $V^{\oplus r}$, then some irreducible component of $G \cdot(V^{\oplus r})^{us}_\theta(T)$ contains it. It follows from \cite[Tag~0397, 0G2Y]{stacks-project} that the irreducible components of $G \cdot(V^{\oplus r})^{us}_\theta(T)$ are the closures of $G$-orbits of irreducible components of $(V^{\oplus r})^{us}_\theta(T)$, and hence by Corollary \ref{prop:toric} are of the form $\overline{G \cdot Y^{\oplus r}}$ where $Y$ is an irreducible component of $V^{us}_\theta(T)$ and the bar indicates closure. We have a chain of inequalities
\[
r + r \dim Y \leq \dim V^{\oplus r} \leq \dim \overline{G \cdot Y^{\oplus r}} = \dim G \cdot Y^{\oplus r} \leq \dim G + r \dim Y,
\]
where the first inequality holds by the assumption that $\theta$ has a nonempty semistable locus, so $Y \subsetneq V$ and $\dim Y +1 \leq \dim V$. It follows that $r\leq \dim G$.

\end{proof}

\begin{corollary}\label{cor:oplus}
If $(V, G)$ is a Weyl-generic representation, then for $r > \dim G$ there exists $\theta \in \Char(G)_\QQ$ for which the stack quotient $[(V^{\oplus r})^{ss}_\theta(G)/G]$ is a nonempty Deligne-Mumford stack.
\end{corollary}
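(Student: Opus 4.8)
The plan is to use Weyl-genericity to single out one character $\theta$, and then to read off both nonemptiness of the quotient and its Deligne--Mumford property from results already in place. The first point I would record is elementary: as a $T$-representation, $V^{\oplus r}$ has the same underlying set of weights as $V$ (only the multiplicities are scaled by $r$), and the descriptions of $\Sigma(-,T)$ and $\omega(-,T)$ in Lemma~\ref{lem:toric} and the remark following it depend only on this underlying set. Hence, as subsets of $\Char(T)_\QQ$,
\[
\Sigma(V^{\oplus r}, T) = \Sigma(V, T), \qquad \omega(V^{\oplus r}, T) = \omega(V, T).
\]

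Since $(V,G)$ is Weyl-generic, $\Sigma(V,T)^W$ is not contained in $\omega(V,T)$, so I would choose $\theta \in \Sigma(V,T)^W \setminus \omega(V,T)$; under the isomorphism $\Char(G)_\QQ \xrightarrow{\sim} \Char(T)_\QQ^W$ of Lemma~\ref{lem:spice}, this $\theta$ is an element of $\Char(G)_\QQ$, and by the displayed equalities it satisfies $\theta \in \Sigma(V^{\oplus r},T)^W$ and $\theta \notin \omega(V^{\oplus r},T)$. Because $r > \dim G$, Proposition~\ref{prop:oplus} then gives $\Sigma(V^{\oplus r},G) = \Sigma(V^{\oplus r},T)^W$, which contains $\theta$; so $(V^{\oplus r})^{ss}_\theta(G)$ is nonempty, and the stack quotient $[(V^{\oplus r})^{ss}_\theta(G)/G]$ is nonempty.

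For the Deligne--Mumford property I would argue that no semistable point acquires a positive-dimensional stabilizer. By Lemma~\ref{lem:abelianization}, $\omega(V^{\oplus r},G) \subseteq \omega(V^{\oplus r},T)^W \subseteq \omega(V^{\oplus r},T)$, and $\theta$ was chosen outside the last set, hence outside $\omega(V^{\oplus r},G)$. By the definition of $\omega(V^{\oplus r},G)$ this forces every $x \in (V^{\oplus r})^{ss}_\theta(G)$ to have stabilizer $G_x \subseteq G$ of dimension $0$, and since $G$ is of finite type over $\CC$ each such $G_x$ is a finite group. A quotient stack $[U/G]$ with $U$ of finite type over $\CC$ and all geometric stabilizers finite is Deligne--Mumford (in characteristic $0$, finiteness of the stabilizers is equivalent to unramifiedness of the diagonal), which completes the proof.

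I do not expect a genuine obstacle here: the argument simply assembles Lemma~\ref{lem:toric}, Lemma~\ref{lem:spice}, Lemma~\ref{lem:abelianization}, and Proposition~\ref{prop:oplus}. The one step deserving a careful (if short) justification is the opening reduction, namely that $\Sigma(-,T)$ and $\omega(-,T)$ are literally unchanged when $V$ is replaced by $V^{\oplus r}$; once that is granted, the choice of $\theta$ and the two conclusions follow directly.
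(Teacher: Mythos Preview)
Your proof is correct and follows essentially the same approach as the paper, which proves the corollary in one line by citing Proposition~\ref{prop:oplus} and Lemma~\ref{lem:abelianization}. You have simply unpacked that one line into its constituent steps, including the observation that $\Sigma(-,T)$ and $\omega(-,T)$ depend only on the set of $T$-weights and are therefore unchanged upon passing from $V$ to $V^{\oplus r}$.
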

\begin{proof}
This follows from Proposition \ref{prop:oplus} and Lemma \ref{lem:abelianization}.
\end{proof}

\section{Degeneracies of representations}
Let $H$ be a semisimple group with maximal torus $T$ (for example, $H$ could be a product of simply connected almost-simple groups as in Theorem \ref{thm:structure}) and let $W$ be the associated Weyl group. In this section we introduce an invariant of representations of $H$, called the \emph{degeneracy} of the representation, that we will use to determine Weyl-generic representations.

For $V$ a positive-dimensional representation of $H$, let $\Xi(V)$ be the weights of the $T$-action on $V$. Note that $\Xi(V)$ is a nonempty finite set.

\begin{lemma}\label{lem:exists-a-sum}
    There exist $a_\xi \in \QQ_{\geq 0}$ such that $\sum_{\xi \in\; \Xi(V)} a_\xi = 1$ and $\sum_{\xi \in\; \Xi(V)} a_\xi \xi=0.$
\end{lemma}
\begin{proof}
     If all elements of $\Xi(V)$ are zero, we can choose $a_\xi=1$ for one $\xi \in \Xi(V)$ and set all other $a_{\xi'}=0$. 
     
     Otherwise, choose a nonzero element $\theta_0\in\chi(T)_{\QQ}$ of the rational cone generated by the $\xi \in \Xi(X)$. Since the Weyl group acts on $\Xi(V)$, we have that $w \,\cdot\, \theta_0$ is also in this cone for each $w \in W$. Hence for each $w \in W$ we can write 
     $w \,\cdot\, \theta_0=\sum_{\xi \in \;\Xi(V) }a_{w, \xi}\xi$  for some rationals $a_{w, \xi}\geq 0$, not all of which are 0. Therefore we can write 
     \[\theta:=\sum_{w\in W}w\,\cdot\, \theta_0=\sum_{\xi \in \;\Xi(V)}a_\xi'\xi \]  for some rationals $a_\xi'$ with $a_\xi'\geq0$, not all of which are 0. On the other hand, $\theta$ is $W$-invariant by construction, so by the semisimplicity of $H$, we get $\theta=0$. Finally after rescaling we can assume $\sum_{\xi} a_\xi = 1$.
\end{proof}

The lemma allows us to make the following definition.

\begin{definition}\label{def:degeni}
Let $V$ be a positive dimensional representation of $H$. The \emph{degeneracy} of $V$, denoted $\degen(V)$, is the smallest integer $n$ such that there exist weights $\xi_1, \ldots, \xi_{n+1}$ of the $T$-action on $X$ and $a_i \in \QQ_{\geq 0}$ such that
\[
\sum_{i=1}^{n+1} a_i=1 \quad \quad \text{and} \quad \quad \sum_{i=1}^{n+1} a_i\xi_i = 0.
\]
The \emph{realization} of a degeneracy $n$ is the list of pairs $\{(a_i, \xi_i)\}_{i=1}^{n+1}$ such that $\sum_{i=1}^{n+1} a_i\xi_i$ equals zero.
\end{definition}
\begin{remark}\label{rmk:bound}
By Corollary \ref{cor:car} we have that $\degen(V) \leq \rk(H)$. In Proposition \ref{prop:pieces} we classify those representations $V$ that have $\degen(V) = \rk(H)$.
\end{remark}
\begin{remark}
We have that $\degen(V)=0$ if and only if $0$ is a weight of $V$.
\end{remark}
The degeneracy of $V$ can also be characterized as follows. 

\begin{lemma}\label{lem:degen}
Let $V$ be a positive dimensional representation of $H$. 
The degeneracy $\degen(V)$ is the minimum dimension of a subspace of $\Char(T)_\QQ$ that is generated as a cone by weights of $V$.
\end{lemma}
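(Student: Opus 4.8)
The plan is to prove the two inequalities separately. Write $d = \degen(V)$ and let $m$ denote the minimum dimension of a subspace of $\Char(T)_\QQ$ that is spanned (as a cone) by weights of $V$; by Lemma \ref{lem:exists-a-sum} and the fact that $\Cone(\Xi(V))$ always spans \emph{some} subspace, the quantity $m$ is well-defined. To show $d \leq m$, I would take a subset $S \subseteq \Xi(V)$ whose cone is a subspace $U$ of dimension $m$. Because $\Cone(S)$ is a linear subspace, $-\xi \in \Cone(S)$ for every $\xi \in S$, and hence $0$ lies in the relative interior of $\Cone(S)$; in particular there is a positive combination $\sum_{\xi \in S} b_\xi \xi = 0$ with all $b_\xi > 0$. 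Rescale so the coefficients sum to $1$. Now $0$ is a positive rational combination of the weights in $S$; applying Carath\'eodory (Lemma \ref{lem:car}, via Corollary \ref{cor:car}) inside the $m$-dimensional space $U$, we can cut down to at most $m+1$ weights $\xi_1, \ldots, \xi_{m+1}$ with $\sum a_i \xi_i = 0$ and $\sum a_i = 1$. By Definition \ref{def:degeni} this witnesses $\degen(V) \leq m$.

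For the reverse inequality $m \leq d$, start from a realization $\{(a_i, \xi_i)\}_{i=1}^{d+1}$ of the degeneracy: $\sum_{i=1}^{d+1} a_i \xi_i = 0$, $\sum a_i = 1$, and (discarding any $\xi_i$ with $a_i = 0$) we may assume all $a_i > 0$. Let $P = \{\xi_1, \ldots, \xi_{d+1}\}$ and let $U = \Cone(P)$. The relation $\sum a_i \xi_i = 0$ with positive coefficients forces $-\xi_j = \sum_{i \neq j} (a_i/a_j)\xi_i \in \Cone(P)$ for each $j$, so $\Cone(P)$ contains the negatives of all its generators and is therefore a linear subspace. Its dimension is at most $|P| - 1 = d$, because a linearly independent subset of $P$ of size $\dim U$ cannot be all of $P$ (the linear dependence $\sum a_i \xi_i = 0$ with all coefficients nonzero shows $P$ is dependent, so $\dim U \leq |P| - 1$). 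Hence we have exhibited a subspace of $\Char(T)_\QQ$ of dimension $\leq d$ that is a cone generated by weights of $V$, which gives $m \leq d$. Combining the two inequalities yields $d = m$.

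I expect the only real subtlety to be the bookkeeping around the claim ``$0$ in the relative interior of $\Cone(S)$ implies a strictly positive relation,'' and conversely that a strictly positive relation among a spanning set of a subspace $U$ is automatic once $\Cone(S)$ is known to be a subspace of dimension $\dim U$ with $S$ as generators — one must be a little careful that passing to a minimal $S$ (so that no proper subset of $S$ still spans $U$ as a cone) is what guarantees every generator appears with positive coefficient. This is elementary convex geometry but worth stating cleanly; everything else is a direct application of Carath\'eodory (Lemma \ref{lem:car}) and the definitions.
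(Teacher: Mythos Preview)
Your proposal is correct and follows essentially the same two-inequality structure as the paper's proof: Carath\'eodory (via Corollary~\ref{cor:car}) for $d\leq m$, and the observation that a strictly positive relation $\sum a_i\xi_i=0$ forces $\Cone(\{\xi_i\})$ to be a subspace of dimension at most $d$ for $m\leq d$. The only cosmetic difference is that the paper produces the zero relation by picking a nonzero $\theta$ in the subspace and adding expressions for $\theta$ and $-\theta$, whereas you invoke the relative-interior argument; your parenthetical about ``discarding $\xi_i$ with $a_i=0$'' is unnecessary since minimality of $d$ already forces all $a_i>0$, but it does no harm.
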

\begin{proof}
The Lemma holds when $0$ is a weight of $V$, so assume this is not the case.

Suppose $U$ is a subspace of $\Char(T)_\QQ$ generated as a cone by a subset $I$ of the weights of $V$, and let $m$ be the dimension of $U$. Let $\theta$ be a nonzero element of this subspace (this exists since no weights are zero). Then $-\theta$ is also in $U$, and we may write 
\[\theta = \sum_{\xi \in I} a_\xi \xi \quad \quad -\theta = \sum_{\xi \in I} b_\xi \xi  \] for some rationals $a_\xi , b_\xi \geq 0$, such that not all $a_\xi$ are zero and also not all $b_\xi$ are zero. Then
\begin{equation}\label{eq:this11}
0 = \sum_{\xi \in I} (a_\xi + b_\xi) \xi
\end{equation}
for some rationals $a_\xi + b_\xi \geq 0$ that are not all zero. By Carath\'eodory's theorem, or rather Corollary \ref{cor:car}, we can arrange for $a_\xi + b_\xi$ to be nonzero for at most $m+1$ of the $\xi \in I$. Dividing \eqref{eq:this11} by the sum of the $a_\xi + b_\xi$ we see that $\degen(V)$ is at most $m$.

Conversely, let $\{(a_i, \xi_i)\}_{i=1}^{n+1}$ be a realization of $\degen(V)$. Since $\sum_{i=1}^{n+1} a_i \xi_i = 0$ the dimension of $\Cone(\{\xi_i\}_{i=1}^{n+1})$ is at most $n$. We claim that $\Cone(\{\xi_i\}_{i=1}^{n+1})$ is a subspace. Indeed, we must have all $a_i \neq 0$ (or $n$ would not be minimal), so we can write
\[
-\xi_{j} = \sum_{\substack{ i=1\\ i \neq j}}^{n+1} (a_i/a_{j}) \xi_i.
\]
\end{proof}

\begin{lemma}\label{lem:degeneracy-properties}
Let $\lambda, \mu$ be dominant weights of $H$ and let $V_\lambda$ and $V_\mu$ be the associated irreducible representations of $H$. There are inequalities
\begin{itemize}
\item[(i)] $\degen(V_\lambda \oplus V_\mu) \leq \min(\degen(V_\lambda), \degen(V_\mu))$
\item[(ii)] $\degen(V_\lambda \otimes V_\mu) \leq \degen(V_\lambda) +\degen(V_\mu)$
\item[(iii)] $\degen(V_\mu) \leq \degen(V_\lambda)$ whenever $\lambda \leq \mu$
\item[(iv)]If $V_\mu$ instead denotes a representation of a second semisimple group $H'$, then $V_\lambda \otimes V_\mu$ is a representation of $H \times H'$, and 
\[
\degen(V_\lambda \otimes V_\mu) \leq \degen(V_\lambda) +\degen(V_\mu).
\]
\end{itemize}
\end{lemma}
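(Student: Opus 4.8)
I would treat parts (i) and (iii) as immediate consequences of the definitions, and parts (ii) and (iv) by a single ``product of realizations'' computation followed by a Carath\'eodory reduction.

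For (i): the $T$-weights of $V_\lambda \oplus V_\mu$ form the set $\Xi(V_\lambda)\cup\Xi(V_\mu)$, so any realization $\{(a_i,\xi_i)\}_{i=1}^{n+1}$ of $\degen(V_\lambda)=n$ --- which involves only weights of $V_\lambda$ --- is also a valid list of weights and coefficients for $V_\lambda\oplus V_\mu$; hence $\degen(V_\lambda\oplus V_\mu)\leq n$, and symmetrically $\leq\degen(V_\mu)$. For (iii): Lemma \ref{lem:weights-contained} gives $\Xi(V_\lambda)\subseteq\Xi(V_\mu)$ whenever $\lambda\leq\mu$, and the same observation --- a realization for $V_\lambda$ serves as one for $V_\mu$ --- yields $\degen(V_\mu)\leq\degen(V_\lambda)$.

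For (ii) and (iv): write $n=\degen(V_\lambda)$, $m=\degen(V_\mu)$ and fix realizations $\{(a_i,\xi_i)\}_{i=1}^{n+1}$ and $\{(b_j,\eta_j)\}_{j=1}^{m+1}$; by minimality all $a_i$ and $b_j$ are strictly positive (as noted in the proof of Lemma \ref{lem:degen}). For (ii), the $T$-weights of $V_\lambda\otimes V_\mu$ include all sums $\xi_i+\eta_j$; setting $c_{ij}=a_ib_j>0$ we get $\sum_{i,j}c_{ij}=(\sum_i a_i)(\sum_j b_j)=1$ and
\[
\sum_{i,j}c_{ij}(\xi_i+\eta_j)=\Big(\sum_j b_j\Big)\sum_i a_i\xi_i+\Big(\sum_i a_i\Big)\sum_j b_j\eta_j=0.
\]
Since $\sum_i a_i\xi_i=0$ is a nontrivial relation, the $\xi_i$ span a subspace of dimension at most $n$, and likewise the $\eta_j$ span dimension at most $m$; hence the $\xi_i+\eta_j$ span a subspace of dimension at most $n+m$, so Corollary \ref{cor:car} rewrites the displayed relation using at most $n+m+1$ nonzero coefficients. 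This shows $\degen(V_\lambda\otimes V_\mu)\leq n+m$. Part (iv) is identical after replacing $\xi_i+\eta_j$ by $(\xi_i,\eta_j)\in\Char(T)_\QQ\oplus\Char(T')_\QQ=\Char(T\times T')_\QQ$, where $T'$ is a maximal torus of $H'$. (Alternatively, one may feed the two realizations directly into Proposition \ref{P:box-new} with $v=w=0$, which already outputs such a relation on $n+m+1$ weights; for (ii) one then composes with the coordinate-summing map $\Char(T)_\QQ^{2}\to\Char(T)_\QQ$.)

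\textbf{Main obstacle.} I do not expect a genuine obstacle; the argument is routine once the right auxiliary result is invoked. The step that is easy to overlook is the Carath\'eodory reduction (equivalently, Proposition \ref{P:box-new}): the raw product realization has $(n+1)(m+1)$ terms and by itself only bounds $\degen(V_\lambda\otimes V_\mu)$ by $nm+n+m$. The remaining care points are the normalization $\sum_{i,j}a_ib_j=1$ and the fact that a realization of a \emph{minimal} degeneracy has all coefficients nonzero --- which is what makes the spanning-dimension estimate valid.
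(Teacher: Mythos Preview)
Your proposal is correct and follows essentially the same approach as the paper. Parts (i) and (iii) match exactly; for (ii) and (iv) the paper invokes Proposition~\ref{P:box-new} directly (and deduces (ii) from (iv) via the diagonal/summing map), while you unfold that proposition into the product realization $c_{ij}=a_ib_j$ plus a Carath\'eodory reduction via Corollary~\ref{cor:car}---but you already note Proposition~\ref{P:box-new} as the alternative, so the two arguments coincide.
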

\begin{proof}
Part (i) follows from noting that the set of weights of $V_\lambda \oplus V_\mu$ is the union of the sets of weights of $V_\lambda$ and $V_\mu$.

Similarly, part (iii) follows from Lemma \ref{lem:weights-contained}: if $\lambda \leq \mu$ then the set of weights of $V_\lambda$ is contained in the set of weights of $V_\mu$.

For part (iv) (resp. part (ii)) we note that weights of $V_\lambda \otimes V_\mu$ are all vectors of the form $(\xi, \zeta)$ (resp. $\xi + \zeta$) where $\xi$ is a weight of $V_\lambda$ and $\zeta$ is a weight of $V_\mu$. Then (iv) follows from Proposition \ref{P:box-new}. Part (ii) now follows from  (iv) by setting $H' = H$.
\end{proof}

\begin{remark}
The  inequalities in Lemma \ref{lem:degeneracy-properties} are far from tight. For example, if $V$ is any representation of a semisimple group $H$, then in fact
\[
\degen(V \otimes V) \leq \degen(V).
\]
This is because the weights of $V \otimes V$ include $2 \xi$ for all $\xi \in \Xi(V)$. So if $\sum_{i=1}^{n+1} a_i \xi_i$ realizes a degeneracy for $V$, we can multilply this equation by 2 to show that $n$ is an upper bound on $\degen(V \otimes V)$.
\end{remark}

We bound the degeneracies of the representations of each of the simply connected almost-simple groups corresponding to minimal dominant weights.
By Lemma \ref{lem:degeneracy-properties}.(iii), this suffices to bound the degeneracies of all the irreducible representations of these groups. Types $B$--$G$ are handled in Lemma \ref{L:degeneracies}; type $A$ is rather different and is handled in Lemma \ref{lem:SLn}.

\begin{lemma}\label{L:degeneracies}
The degeneracies of the minuscule representations of the groups $B_n, C_n, D_n,$ $ E_6,$ $ E_7,$ $E_8$, $F_4$, and $G_2$ have upper bounds as displayed in Table \ref{T:degeneracies}.\end{lemma}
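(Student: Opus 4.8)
The plan is to treat each minuscule representation separately and, in each case, verify the bound $\degen(V_\lambda)\le k$ from Table~\ref{T:degeneracies} by exhibiting weights $\xi_1,\dots,\xi_{k+1}$ of $V_\lambda$ together with nonnegative rationals $a_1,\dots,a_{k+1}$ with $\sum_i a_i = 1$ and $\sum_i a_i\xi_i = 0$; by Definition~\ref{def:degeni} this is exactly what must be shown. The weights of a minuscule representation form a single Weyl orbit and are tabulated explicitly for every group (e.g.\ in \cite[Ch.~VI]{Bourbaki}), so in each case the search for a balanced subcollection of the prescribed size is a concrete finite problem. Note that $E_8$, $F_4$ and $G_2$ have no minuscule weights at all: there the only dominance-minimal dominant weight is $0$, whose representation is trivial and has degeneracy $0$ by the remark following Definition~\ref{def:degeni}.

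I would first clear out the self-dual cases. When the longest element of the Weyl group acts on $\chi(T)$ as $-\operatorname{id}$ --- this occurs for $B_n$, $C_n$, $D_n$ with $n$ even, and $E_7$ --- the representation $V_\lambda$ is isomorphic to its dual, so $-\mu$ is a weight whenever $\mu$ is, and then $\tfrac12\mu+\tfrac12(-\mu)=0$ forces $\degen(V_\lambda)\le 1$; as $0$ is not a weight of a nontrivial minuscule representation, the degeneracy is in fact exactly $1$. The same one-line argument applies directly (without reference to $w_0$) to the standard representations $V_{\omega_1}$ of $C_n$ and of $D_n$, whose weights are $\pm e_1,\dots,\pm e_n$, and to the spin representation $V_{\omega_n}$ of $B_n$, whose weights $\tfrac12(\pm1,\dots,\pm1)$ run over all sign patterns.

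The substantive cases are the non-self-dual ones: the two $27$-dimensional representations $V_{\omega_1}, V_{\omega_6}$ of $E_6$, and the half-spin representations $V_{\omega_{n-1}}, V_{\omega_n}$ of $D_n$ for $n$ odd. Here $-\mu$ is never a weight, so no balanced pair exists and one needs genuinely new configurations; I expect this to be the main obstacle. For $E_6$ I would use a maximal-rank embedding $A_2\times A_2\times A_2\hookrightarrow E_6$, under which $V_{\omega_1}$ decomposes as $(\mathbf{3},\bar{\mathbf{3}},\mathbf{1})\oplus(\mathbf{1},\mathbf{3},\bar{\mathbf{3}})\oplus(\bar{\mathbf{3}},\mathbf{1},\mathbf{3})$; choosing one weight from each summand so that in every $A_2$-factor the three contributions take the form $v, 0, -v$ produces three weights of $V_{\omega_1}$ summing to $0$, whence $\degen(V_{\omega_1})\le 2$, and likewise for $V_{\omega_6}$. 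For a half-spin representation of $D_n$ with $n$ odd, whose weights are $\tfrac12\epsilon$ for $\epsilon\in\{\pm1\}^n$ of a fixed parity of $-1$'s, I would write down four admissible sign vectors $\epsilon^{(1)},\dots,\epsilon^{(4)}$ with $\epsilon^{(1)}+\epsilon^{(2)}$ supported on a single coordinate and $\epsilon^{(3)}+\epsilon^{(4)}$ equal to its negative, checking that the required parity holds for all four (this is the step that uses $n$ odd); their average is $0$, giving $\degen\le 3$. The only thing that then remains is to match these constructions against the exact entries of Table~\ref{T:degeneracies}, and, should exact values (rather than mere upper bounds) be needed later, to note that the obstructions above --- non-self-duality ruling out $\degen=1$, and nontriviality ruling out $\degen=0$ --- already furnish the matching lower bounds.
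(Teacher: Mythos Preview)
Your proposal is correct and follows essentially the same case-by-case strategy as the paper: dispose of the self-dual cases via $-1\in W$ (equivalently $w_0=-\mathrm{id}$), then exhibit explicit balanced subsets of weights for the remaining cases. Your four-sign-vector construction for the half-spin representations of $D_n$ ($n$ odd) is the same as the paper's, just phrased more abstractly.

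The one genuine variation is your treatment of $E_6$: the paper simply writes down three weights of $V_{\omega_1}$ in the standard $\QQ^8$ coordinates (found with Bourbaki or \textsc{SageMath}) and checks they sum to zero, whereas you invoke the maximal-rank $A_2^3\hookrightarrow E_6$ and the decomposition $(\mathbf{3},\bar{\mathbf{3}},\mathbf{1})\oplus(\mathbf{1},\mathbf{3},\bar{\mathbf{3}})\oplus(\bar{\mathbf{3}},\mathbf{1},\mathbf{3})$ to produce such a triple structurally. Your route is more conceptual and explains \emph{why} a size-three relation exists, but it does require the reader to accept (or look up) that branching rule; the paper's approach is less illuminating but entirely self-contained. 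Either is fine for the purposes of the lemma, which only asserts upper bounds.
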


\begin{table}[h]
\caption{For each simply connected almost-simple group not of type $A$, we list the degeneracies of the minuscule representations.  The $\omega_i$ refer to specific fundamental weights as defined in  \cite[Ch. 6]{Bourbaki} (but see the proof of Lemma \ref{L:degeneracies} for details).}
\label{T:degeneracies}
\begin{tabular}{l|c|c}
group  & minuscule weight $\lambda$ & bound on $\degen(V_\lambda) $  \\ \hline \hline
 $B_n,\;n \geq 2$ & $\omega_n$& 1 \\ \hline
  $C_n,\;n \geq 3$ &  $\omega_1$ & 1 \\ \hline
  $D_{n},\;n \geq 4$ even & $\omega_1$& 1 \\ \hline
 & $\omega_{n-1}$& 1 \\ \hline
 & $\omega_n$& 1 \\ \hline
  $D_{n},\;n\geq 5$ odd& $\omega_1$ & 1 \\ \hline
 & $\omega_{n-1}$& 3 \\ \hline
 & $\omega_n$& 3 \\ \hline
  $E_{6}$ & $\omega_1$& 2 \\ \hline
 & $\omega_6$& 2 \\ \hline
   $E_{7}$ 
 & $\omega_7$ & 1 \\ \hline
   $E_{8}$ & none &  \\ \hline
   $F_{4}$ & none &  \\ \hline
 $G_2$ &none & 

\end{tabular}

\end{table}

\begin{proof}[Proof of Lemma \ref{L:degeneracies}]
The minuscule weights are listed in \cite[Chapter VI, Exercise 4.15 (p.232)]{Bourbaki}. To compute the degeneracies of the corresponding representations, we will repeatedly use the fact that if $-1$ is in the Weyl group and $\lambda$ is any dominant weight, then since the weights of the representation $V_\lambda$ are as a set invariant under the Weyl action, both $\lambda$ and $-\lambda$ are weights of $V_\lambda$. So the degeneracy of $V_\lambda$ is at most 1 in this case. We will also use Remark \ref{rmk:finding-weights}.\\


\noindent
\textit{Type $B_n.$} 
From \cite[\S VI.4.5]{Bourbaki} we see that $\omega_n$ is not in the root lattice, so 0 is not a weight of $V_{\omega_n}$ and $\degen(V_{\omega_n}) \neq 0$. Since the Weyl group contains  -1 we have $\degen(V_{\omega_n})\leq 1$, so $\degen(V_{\omega_n})=1$.\\



\noindent
\textit{Type $C_n$.} From \cite[\S VI.4.6]{Bourbaki} we see that $\omega_1$ is not in the root lattice, so 0 is not a weight of $V_{\omega_1}$ and $\degen(V_{\omega_1}) \neq 0$. Since the Weyl group contains  -1 we have $\degen(V_{\omega_1})\leq 1$, so $\degen(V_{\omega_1})=1$.\\

\noindent
\textit{Type $D_n$.} From \cite[\S VI.4.8]{Bourbaki} the minuscule weights can be represented with the following elements of $\QQ^n$:
\[
\omega_1 = e_1, \quad \quad \quad
\omega_{n-1} = \frac{1}{2}(e_1 + \ldots + e_{n-1} - e_n)\quad \quad \quad
\omega_{n} = \frac{1}{2}(e_1 + \ldots + e_{n-1} + e_n),
\]
where $\{e_i\}_{i=1}^n$ is the standard basis of $\QQ^n$.
The Weyl group permutes the coordinates of a vector in $\QQ^n$ and flips the signs on an even number of coordinates. It follows that $\omega_1$ and $-\omega_1$ are both roots of $V_{\omega_1}$ so $\degen(V_{\omega_1})$ is at most 1, hence is equal to 1 since $\omega_1$ is not a root. 

The degeneracies of $V_{\omega_{n-1}}$ and $V_{\omega_n}$ split into two cases, depending on whether $n$ is even or odd: if $n$ is even, then $-\omega_{n-1}$ (resp. $-\omega_n$) is a root of $V_{\omega_{n-1}}$ (resp. $V_{\omega_n}$) and the degeneracy of these representations is at most $1$. If $n$ is odd then 
we have the following four vectors in the Weyl orbit of $\omega_{n-1}:$
\begin{align*}
v_1 = \omega_{n-1} = & \frac{1}{2}(e_1 + \ldots + e_{n-2})  + \frac{1}{2}e_{n-1}  -\frac{1}{2} e_n\\
v_2 = & \frac{1}{2}(e_1 + \ldots + e_{n-2})  - \frac{1}{2}e_{n-1} +\frac{1}{2} e_n\\
v_3  = -&\frac{1}{2}(e_1 + \ldots + e_{n-2})  + \frac{1}{2}e_{n-1}  +\frac{1}{2} e_n\\
v_4  = -&\frac{1}{2}(e_1 + \ldots + e_{n-2})   -\frac{1}{2}e_{n-1}  -\frac{1}{2} e_n
\end{align*}
Note that $n-2$ is odd.  Since $v_1 + v_2 + v_3 + v_4=0$ we see that the degeneracy of $V_{\omega_{n-1}}$ is at most 3, and likewise since $-v_i$ is in the Weyl orbit of $\omega_n$ the degeneracy of $V_{\omega_n}$ is  at most $3$.

For later use, we  record the following lemma.

\begin{lemma}\label{L:later}
The degeneracies of the minuscule representations of $D_n$, for $n$ odd, can be realized by a set of vectors living in a subspace of $\chi(T)_\QQ$ of dimension 4.
\end{lemma}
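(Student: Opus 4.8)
The plan is to exhibit one explicit $4$-dimensional subspace $U\subseteq\chi(T)_\QQ$ and to check that the realizations of the degeneracies already constructed in the proof of Lemma \ref{L:degeneracies} all lie inside $U$. As in that proof I identify $\chi(T)_\QQ$ with $\QQ^n$, with the Weyl group of $D_n$ acting by permuting coordinates together with sign changes on an even number of them. I would take
\[
U:=\text{ the }\QQ\text{-span of }\ e_1,\quad e_1+e_2+\cdots+e_{n-2},\quad e_{n-1},\quad e_n.
\]
Since $D_n$ is only defined for $n\geq 4$ and we assume $n$ odd, in fact $n\geq 5$, so $n-2\geq 3$; hence $e_1+\cdots+e_{n-2}$ has a nonzero $e_2$-coordinate and is independent of $e_1,e_{n-1},e_n$, while those three are visibly independent. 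Thus $\dim U=4$.

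Next I would handle the three minuscule weights $\omega_1$, $\omega_{n-1}$, $\omega_n$ in turn, always using the realizations from the proof of Lemma \ref{L:degeneracies}. For $\omega_1=e_1$: negating coordinates $1$ and $2$ is a Weyl element carrying $e_1$ to $-e_1$, so both $\pm e_1$ are weights of $V_{\omega_1}$ and $\{(\tfrac12,e_1),(\tfrac12,-e_1)\}$ realizes $\degen(V_{\omega_1})=1$; plainly $\pm e_1\in U$. For $\omega_{n-1}$: the proof exhibits vectors $v_1,v_2,v_3,v_4$ in the Weyl orbit of $\omega_{n-1}$ with $v_1+v_2+v_3+v_4=0$, each of which is a rational combination of $e_1+\cdots+e_{n-2}$, $e_{n-1}$, $e_n$ and therefore lies in $U$; so $\{(\tfrac14,v_i)\}_{i=1}^4$ realizes $\degen(V_{\omega_{n-1}})$ inside $U$. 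For $\omega_n$: the vectors $-v_i$ lie in the Weyl orbit of $\omega_n$ and again sum to zero, so $\{(\tfrac14,-v_i)\}_{i=1}^4$ realizes $\degen(V_{\omega_n})$ inside $U$ (note $-U=U$). If some spinor degeneracy happened to be strictly smaller than $3$, one simply passes to a realization supported on a sub-collection of the rescaled $v_i$, still inside $U$.

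I do not anticipate a genuine obstacle here; the content is bookkeeping about explicit vectors. The two points that need a moment of attention are that $n\geq 5$ (so that $U$ really has dimension $4$ and not $3$) and that $-e_1$ belongs to the Weyl orbit of $\omega_1$ even though $-1$ is not an element of the Weyl group of $D_n$ when $n$ is odd. Combining the verifications of the previous paragraph then yields the lemma, with the minimal subspace containing all these realizations being exactly the $U$ above.
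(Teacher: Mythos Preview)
Your proof is correct and follows essentially the same approach as the paper: exhibit an explicit low-dimensional subspace containing the four vectors $v_1,\ldots,v_4$ (which handle $\omega_{n-1}$ and $\omega_n$) together with a realization of the degeneracy of $V_{\omega_1}$. The only minor difference is that the paper realizes $\degen(V_{\omega_1})$ via $\pm e_n=\pm(v_1+v_4)$, which already lies in $\mathrm{span}(v_1,\ldots,v_4)$, whereas you use $\pm e_1$ and enlarge the subspace by adjoining $e_1$; both choices give a subspace of dimension at most $4$, which is all the lemma requires.
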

\begin{proof}
The minuscule representations of $D_n$  are $V_{\omega_1}, V_{\omega_{n-1}}$, and $V_{\omega_n}$. The span of $v_1, \ldots, v_4$ is a four-dimensional  subspace of $\chi(T)_{\QQ}$ that contains a realization of each degeneracy: 
\begin{itemize}
\item $v_1 + \ldots + v_4$ realizes the degeneracy of $V_{\omega_{n-1}}$
\item  $(-v_1) + (-v_2) + (-v_3) + (-v_4)$ realizes the degeneracy of $V_{\omega_n}$
\item $(v_1+v_4) + (-v_1-v_4)$ realizes the degeneracy of $V_{\omega_1}$
\end{itemize}
For the last claim, note that $\pm(v_1+v_4) = (0, \ldots, 0, \mp 1)$ are in the Weyl orbit of $\omega_1$.
\end{proof}

\noindent
\textit{Type $E_6$.} The weights $\omega_1$ and $\omega_6$ correspond to the dual 27-dimensional representations of $E_6$, so it is enough to bound $\degen(V_{\omega_1})$. Computing the weights of this representation from the data given in \cite[\S VI.4.12]{Bourbaki} is lengthy, but they can also be looked up directly with {\small{\sc SageMath}} using the class \verb+WeylCharacterRing+. In the standard representation of this root system (common to both \cite{Bourbaki} and {\small{\sc SageMath}}), weights of $E_6$ are represented as vectors in $\QQ^8$. If $\{e_i\}_{i=1}^8$ is the standard basis, the weights of $V_{\omega_1}$ include the three vectors
\begin{align*}
v_1 = \omega_1 = -&\frac{2}{3}(e_6+e_7-e_8)\\
v_2=&\frac{1}{3}(e_6+e_7-e_8) + e_5\\
v_3=&\frac{1}{3}(e_6+e_7-e_8) - e_5
\end{align*}
Since $v_1+v_2+v_3=0$ one sees that $\degen(V_{\omega_1}) \leq 2$. (In fact one can check that it is exactly 2.)\\

\noindent
\textit{Type $E_7$.} From \cite[\S VI.4.11]{Bourbaki} we see that $\omega_1$ is not in the root lattice, so 0 is not a weight of $V_{\omega_1}$ and $\degen(V_{\omega_1}) \neq 0$. Since the Weyl group contains  -1 we have $\degen(V_{\omega_1})\leq 1$, so $\degen(V_{\omega_1})=1$.\\

\end{proof}

\begin{lemma}\label{lem:SLn}
Let $V$ be an irreducible representation of $SL(n+1)=A_{n}$. Then $\degen(V)=n$ if $V$ is the standard representation, its dual, or (if $n=1$) if $V$ is one of the representations $Sym^{2\ell+1}(\CC^2)$ for some $\ell\geq 0$ . Otherwise, $\degen(V)\leq n-1$.
\end{lemma}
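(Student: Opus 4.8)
The plan is to work with an explicit model of the weights of $SL(n+1)$. Write $V_\lambda$ for the irreducible representation with highest weight $\lambda = \sum_{i=1}^n c_i \omega_i$, where $\omega_1, \ldots, \omega_n$ are the fundamental weights and $c_i \geq 0$. I would realize the weight lattice inside $\QQ^{n+1}/\QQ(1,\ldots,1)$, so that the fundamental weight $\omega_k$ is the image of $e_1 + \cdots + e_k$ and the Weyl group $W = S_{n+1}$ acts by permuting coordinates. Since $\omega_k$ and $\omega_{n+1-k}$ give dual representations, the standard representation $V_{\omega_1}$ and its dual $V_{\omega_n}$ are the two ``extreme'' cases; for everything else I need to show the degeneracy drops to at most $n-1$, i.e.\ that there is a realization of a degeneracy using weights that span a subspace of dimension $\leq n-1$.

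First, by Lemma \ref{lem:degeneracy-properties}.(iii) (monotonicity under the dominance order), it suffices to treat the dominance-minimal weights $\lambda$ among those that are \emph{not} $\omega_1$ or $\omega_n$, plus the tower $Sym^{2\ell+1}(\CC^2)$ in the $n=1$ case. Concretely, every dominant weight $\lambda \neq 0, \omega_1, \omega_n$ dominates one of a small list of ``building block'' weights: namely $\omega_k$ for $2 \leq k \leq n-1$, $2\omega_1$, $2\omega_n$, or $\omega_1 + \omega_n$ (the adjoint weight, which has $0$ as a weight, hence degeneracy $0$). So the bulk of the proof is: (a) show $\degen(V_{\omega_k}) \leq n-1$ for $2 \leq k \leq n-1$; (b) show $\degen(V_{2\omega_1}) \leq n-1$ and dually for $2\omega_n$; (c) handle $n=1$ separately, where the only dominant weights are $\ell\omega_1$ and I must show $\degen(Sym^\ell\CC^2) = 1$ exactly when $\ell$ is odd (using that $Sym^\ell\CC^2$ has weights $\ell, \ell-2, \ldots, -\ell$, so $0$ is a weight iff $\ell$ is even, and the nonzero case has $-\lambda$ among its weights so degeneracy $\leq 1$).

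For step (a), the key computation is that for $2 \leq k \leq n-1$ the weights of $V_{\omega_k}$ are the images of $e_{i_1} + \cdots + e_{i_k}$ for all $k$-subsets; I would exhibit an explicit cyclic sum of such weights that vanishes and spans a proper subspace. For instance, taking a cyclic family of $k$-subsets of $\{1, \ldots, n+1\}$ whose indicator vectors sum to a constant vector produces a vanishing combination in $\QQ^{n+1}/\QQ(1,\ldots,1)$; one then checks the chosen weights span a subspace of dimension at most $n-1$ (they certainly miss at least one dimension because $k < n+1$ constrains the support pattern), and invoke Lemma \ref{lem:degen}. For step (b), $V_{2\omega_1}$ has weights $e_i + e_j$ (including $2e_i$), and again a suitable small cyclic relation among these vectors lies in a proper subspace.

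The main obstacle I anticipate is step (a): producing, uniformly in $k$ and $n$, an explicit realization of a degeneracy whose supporting weights provably span a subspace of dimension $\leq n-1$ rather than all of $\chi(T)_\QQ$ (which has dimension $n$). The danger is that the most obvious vanishing relations (e.g.\ summing all weights, which vanishes by Weyl-invariance) use weights spanning the full space. So the real content is a careful choice of a \emph{sparse} cyclic relation — few enough weights, arranged so their linear span is visibly contained in a coordinate hyperplane (after passing to the quotient) — and then a clean dimension bound. Once that combinatorial lemma is in place, assembling the cases via the dominance order and Lemma \ref{lem:degeneracy-properties} is routine.
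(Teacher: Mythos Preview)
Your overall strategy---use the $\ZZ^{n+1}/\ZZ(1,\ldots,1)$ model, reduce via Lemma~\ref{lem:degeneracy-properties}(iii), and exhibit explicit sparse vanishing relations for the minimal cases---matches the paper's, and your step~(a) and the $n=1$ case are essentially what the paper does.

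The gap is in the reduction to building blocks. Your list $\{\omega_2,\ldots,\omega_{n-1},\,2\omega_1,\,2\omega_n,\,\omega_1+\omega_n\}$ misses an entire coset of the root lattice: for $A_n$ the weight $\sum_i c_i\omega_i$ lies in coset $\sum_i i\,c_i \pmod{n+1}$, and your list occupies only cosets $\{0,2,3,\ldots,n-1\}$. Hence any dominant $\lambda$ with $\lambda>\omega_1$ strictly (so in coset~$1$) dominates nothing on your list. Concretely, the two covers of $\omega_1$ in the dominance order are $\mu_1 = e_1+e_2-e_{n+1}$ and $\mu_2 = 2e_1-e_{n+1}$, and neither is handled by your plan. (Relatedly, your step~(b) is redundant: $2\omega_1 - \omega_2 = e_1-e_2$ is a simple root, so $2\omega_1 \geq \omega_2$ and step~(a) already covers it---a sign that the dominance structure is not quite what you have in mind.)

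The paper fills this gap by treating coset~$1$ (and dually coset~$n$) directly. Since $\mu_1 \leq \mu_2$ it suffices to bound $\degen(V_{\mu_1})$; since $\omega_1 \leq \mu_1$ all of $e_1,\ldots,e_{n+1}$ are weights of $V_{\mu_1}$, and the relation
\[
\mu_1 + e_3 + \cdots + e_n + 2e_{n+1} \equiv 0
\]
uses $n$ weights, giving $\degen(V_{\mu_1}) \leq n-1$.
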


\begin{proof}
We represent the weights of $V$ as vectors in $\ZZ^{n+1}$ modulo the diagonal. Let $\{e_i\}_{i=1}^{n+1}$ be the standard basis of $\ZZ^{n+1}$. The dominant weights are (weakly) decreasing sequences of integer vectors and the minuscule weights are
\[
\omega_i = e_1 + e_2 + \ldots + e_i \quad \quad \text{for } i=1, \ldots, n.
\]
The positive roots are $e_i-e_j$ for $1\leq i < j \leq n+1$. The Weyl group $S_{n+1}$ acts by permuting the coordinates. We will repeatedly use the observation that $\degen(V) = \degen(V^\vee)$, and that for a dominant weight $\lambda$ the dominant weight corresponding to $V_\lambda^\vee$ can be computed by reversing the order of the coordinates of $-\lambda$. We will also use Remark \ref{rmk:finding-weights}.

We begin by computing $\degen(V_{\omega_1})$ (the standard representation). The weights of $V_{\omega_1}$ are $e_i$ for $i=1, \ldots, n+1$. Linear algebra shows that when $\ell \neq 0$, the only way to write $(\ell,\ell,\ldots, \ell)$ for some $\ell>0$ as a linear combination of the $e_i$'s in $\QQ^{\oplus n+1}$ is to use every $e_i$. So $\degen(V_{\omega_1})=n$. Since $V_{\omega_n} = V_{\omega_1}^\vee$ we get $\degen(V_{\omega_n})=n$ as well.

We next show $\degen(V_{\omega_i}) \leq n-1$ for $i\neq 1, n$. By duality it is enough to consider integers $i \leq (n+1)/2$. Consider the set $\Xi$ of vectors that are (a) supported in the last $n+1-i$ coordinates and (b) have 1's in $i$ consecutive coordinates and 0's elsewhere, where we consider positions $n+1$ and $i+1$ to be consecutive. Then $\xi \in \Xi$ is a weight of $V_{\omega_i}$ (being in the Weyl orbit of $\omega_i$) and 
\[
\sum_{\xi \in \Xi} \xi = i(e_{i+1} + e_{i+2} + \ldots + e_{n+1}), \quad \quad \text{hence} \quad \quad i\omega_i + \sum_{\xi \in \Xi} \xi \equiv 0.
\]
Since $\Xi$ has size $n+1-i$ this shows $\degen(V_{\omega_i}) \leq n+1-i \leq n-1$ (since $i \geq 2$).

By Lemma \ref{lem:degeneracy-properties}.(iii) the preceeding discussion shows $\degen(V_\lambda) \leq n-1$ for all $\lambda$ such that $\omega_i \leq \lambda$ for some $i=2, \ldots, n-1$. We still need to show $\degen(V_\lambda) \leq n-1$ when $\omega_i \leq \lambda$ but $\omega_i \neq \lambda$ for $i=1, n-1$. By duality, it is enough to consider those dominant weights $\lambda$ such that $\lambda = \omega_1 + \alpha$ where $\alpha$ is a nonnegative integral sum of simple roots. In fact it is enough to consider the case where $\alpha$ is a positive root (see e.g. \cite[Thm 2.6]{Stembridge} for a much stronger result). The only dominant weights of this form are
\[
\mu_1 := e_1 + e_2 - e_{n+1} \quad \quad\text{ and } \quad \quad \mu_2 := 2e_1 - e_{n+1}.
\]
However $\mu_2$ is equal to $\mu_1$ plus the simple root $e_1-e_2$, so $\mu_1 \leq \mu_2$ and it is enough to compute $\degen(V_{\mu_1})$. By construction $e_1 \leq \mu_1$ and hence $e_1$, and every element of its Weyl orbit, is a weight of $V_{\mu_1}$. But
\[
\mu_1 + e_3 + \ldots + e_n + 2e_{n+1} \equiv 0,
\]
so $\degen(V_{\mu_1}) \leq n-1.$
\end{proof}

\section{Weyl-genericity: necessary and sufficient conditions}

Let $G$ be a connected complex reductive group with maximal torus $T$ and Weyl group $W$ and let $V$ be a representation. We recall from the introduction the following definition.

\begin{definition}
The representation $(V, G)$ is \emph{Weyl-generic} if $\Sigma(V, T)^W$ is not contained in $\omega(V, T)$.
\end{definition}

\begin{remark}
The property that $(V, G)$ is Weyl-generic does not depend on the choice of maximal torus $T$. If $T'$ is another maximal torus of $G$, then $T$ and $T'$ are conjugate by an element $g \in G$, and conjugation defines an isomorphism of  Weyl groups $W(T, G) \to W(T', G)$ and a Weyl-equivariant isomorphism of vector spaces $\Char(T)_\QQ \to \Char(T')_\QQ$. This isomorphism preserves the weights of $V$ and hence the semistable cone and loci of walls by Lemma \ref{lem:toric}.
\end{remark}

\subsection{Necessary conditions}\label{sec:necessary}
In this section we prove the following theorem, which contains necessary conditions for $(V, G)$ to be Weyl-generic.

\begin{theorem}\label{thm:necessary}
If $(V, G)$ is Weyl-generic, then $G$ has type $A$. Moreover the weights of $V$ as a representation of the center $Z(G)$ span $\Char(Z(G))_\QQ$.
\end{theorem}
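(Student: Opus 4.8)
The plan is to pass to the cover $H\times D$ of Theorem~\ref{thm:structure}, settle the ``moreover'' clause by a stabilizer count, and reduce the type-$A$ assertion to the statement that $\degen(V|_H)<\rk(H)$ forces non-Weyl-genericity; that last statement is then proved by convex geometry.

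\emph{Reduction and the second assertion.} By Theorem~\ref{thm:structure} there is a finite cover $f\colon H\times D\to G$ with $H=\prod_l H_l$ a product of simply connected almost-simple groups and $D$ the maximal central torus, inducing a Weyl-equivariant isomorphism $\chi(K)_\QQ\cong\chi(T\times D)_\QQ$. As the semistable cone, the wall locus, the Weyl action, and the weights all depend only on the $(T\times D)$-weights of $V$ and on $\dim(T\times D)$, the representation $(V,G)$ is Weyl-generic if and only if $(f^*V,\,H\times D)$ is; moreover $G$ has type $A$ exactly when every $H_l$ is special linear, and since $Z(G)^\circ=f(D)$ with $\chi(Z(G))_\QQ\cong\chi(D)_\QQ$, the weights of $V$ on $Z(G)$ span $\chi(Z(G))_\QQ$ exactly when the $D$-weights of $f^*V$ span $\chi(D)_\QQ$. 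So I assume $G=H\times D$. Put $r=\rk(H)$, $e=\dim D$, $k=r+e$, and write the $(T\times D)$-weights of $V$ as $\eta_i=(\mu_i,\chi_i)\in\chi(T)_\QQ\oplus\chi(D)_\QQ$, so $\{\mu_i\}$ is the set of weights of $V|_H$. By Lemma~\ref{lem:spice} applied to each $H_l$, $W$ fixes precisely the subspace $\chi(D)_\QQ$, whence $\Sigma(V,T\times D)^W=\Sigma(V,T\times D)\cap\chi(D)_\QQ$, while $\omega(V,T\times D)=\bigcup_{|I|\le k-1}\Cone(\{\eta_i\}_{i\in I})$ by Lemma~\ref{lem:toric}. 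For the ``moreover'' clause, suppose $(V,H\times D)$ is Weyl-generic and pick $\theta\in\Sigma(V,T\times D)\cap\chi(D)_\QQ$ not in $\omega(V,T\times D)$. Since $\theta$ lies in the semistable cone, the vector $(1,\dots,1)$ is $\theta$-semistable; since $\theta\notin\omega$ it has finite $(T\times D)$-stabilizer, i.e.\ $\bigcap_i\ker\eta_i$ is finite, i.e.\ the $\eta_i$ span $\chi(T\times D)_\QQ$; projecting, the $\chi_i$ span $\chi(D)_\QQ$. (Conversely, if the $D$-weights do not span then every $\eta_i$ lies in a proper subspace, so $\Sigma=\omega$ by Carath\'eodory; hence in proving the type-$A$ claim I may assume the $D$-weights span $\chi(D)_\QQ$.)

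\emph{Type $A$ reduces to a degeneracy bound.} I argue the contrapositive: if some $H_l$ is not of type $A$, then $(V,H\times D)$ is not Weyl-generic. First, $\degen(V|_H)\le r-1$. For any $(H\times D)$-irreducible summand $W$ of $V$, write $W|_H$ as an external tensor product $V_{\lambda_1}\otimes\cdots\otimes V_{\lambda_M}$ over $H_1,\dots,H_M$; iterating Lemma~\ref{lem:degeneracy-properties}(iv) gives $\degen(W|_H)\le\sum_m\degen(V_{\lambda_m})$. By Remark~\ref{rmk:bound} each $\degen(V_{\lambda_m})\le\rk(H_m)$, and for $m=l$ the bound $\degen(V_{\lambda_l})\le\rk(H_l)-1$ follows from Lemma~\ref{lem:degeneracy-properties}(iii) together with Lemmas~\ref{lem:SLn} and \ref{L:degeneracies}: every dominance-minimal dominant weight is $0$ or minuscule, and for a non-type-$A$ group all of these have degeneracy at most $\rk-1$ (the trivial representation has degeneracy $0$; the minuscule cases are in Table~\ref{T:degeneracies}). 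Hence $\degen(W|_H)\le r-1$, and $\degen(V|_H)\le\degen(W|_H)\le r-1$ by Lemma~\ref{lem:degeneracy-properties}(i). It therefore remains to prove: if $\degen(V|_H)<r$ then $\Sigma(V,T\times D)\cap\chi(D)_\QQ\subseteq\omega(V,T\times D)$.

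\emph{The core estimate and the main obstacle.} Fix $\theta$ in the slice $\Sigma(V,T\times D)\cap\chi(D)_\QQ$; I must realize $\theta$ as a nonnegative combination of at most $k-1$ of the $\eta_i$. Write $\theta=\sum_{i\in S}c_i\eta_i$ with all $c_i>0$ and (since the $\chi(T)_\QQ$-component of $\theta$ vanishes) $\sum_{i\in S}c_i\mu_i=0$. If $\{\eta_i\}_{i\in S}$ spans a proper subspace of $\chi(T\times D)_\QQ$, Carath\'eodory (Lemma~\ref{lem:car}) already writes $\theta$ with at most $k-1$ weights; otherwise a further application of Carath\'eodory lets me assume $|S|=k$ and $\{\eta_i\}_{i\in S}$ a basis. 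The hypothesis $\degen(V|_H)<r$ supplies weights $\nu_1,\dots,\nu_{d+1}$ of $V|_H$ with $d+1\le r-1$, and positive rationals $a_\ell$ with $\sum_\ell a_\ell=1$, $\sum_\ell a_\ell\nu_\ell=0$, and $\dim\mathrm{span}\{\nu_\ell\}=d$. The idea is to lift the $\nu_\ell$ to $(T\times D)$-weights of $V$ and, using that the $D$-weights span $\chi(D)_\QQ$, to re-express $\theta$ by an exchange/greedy procedure: treat the (at most $e$) directions of $\chi(D)_\QQ$ occurring in $\theta$ one at a time, using the cheap relation $\sum_\ell a_\ell\nu_\ell=0$ to cancel the $\chi(T)_\QQ$-components inside a fixed $d$-dimensional subspace, with one extra ``split'' weight per direction to balance. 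This should produce a representation of $\theta$ using at most $e+d\le e+(r-1)=k-1$ of the $\eta_i$, so $\theta\in\omega(V,T\times D)$. The step where the count is tight is precisely the inequality $r=\rk(H)\ge 2$, which holds because a simply connected almost-simple group not of type $A$ has rank at least $2$. Making this bookkeeping precise---so that the running number of weights never exceeds $k-1$, across all shapes of the slice and all degeneracy realizations---is the delicate heart of the argument; everything else is formal.
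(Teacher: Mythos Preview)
Your reduction to $G=H\times D$ and your treatment of the ``moreover'' clause are fine, and Step~2 (a non-type-$A$ factor forces $\degen(V|_H)\le\rk(H)-1$) is correct. The genuine gap is Step~3: the implication ``$\degen(V|_H)<\rk(H)\Rightarrow(V,G)$ is not Weyl-generic'' is \emph{false}, so no bookkeeping will close it. A counterexample is the Grassmannian flop of Example~\ref{ex:gr-flop}: take $H=SL(3)$, $D=\Gm$, and $V=(\CC^3\otimes\CC_1)\oplus((\CC^3)^\vee\otimes\CC_{-1})$. Then $V|_H=\CC^3\oplus(\CC^3)^\vee$ has weights $\{\pm e_i\}$, and $e_1+(-e_1)=0$ gives $\degen(V|_H)=1<2=\rk(H)$; yet $V$ is Weyl-generic. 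The reason your sketch cannot be completed is that the cheap degeneracy relation $\sum_\ell a_\ell\nu_\ell=0$ mixes $T$-weights coming from summands with \emph{different} $D$-weights, so lifting to $(T\times D)$-weights yields $\sum_\ell a_\ell(\nu_\ell,\chi_{i(\ell)})=(0,\sum_\ell a_\ell\chi_{i(\ell)})$ rather than $(0,0)$; that $D$-residue has no useful relation to $\theta_D$, and your ``one extra split weight per direction'' has no mechanism to absorb it within the count $e+d$. In effect, by passing to the global invariant $\degen(V|_H)$ you have discarded the pairing between $H$-weights and $D$-weights, and that pairing is exactly what the argument needs.

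The paper's proof does not factor through a global degeneracy bound. Instead it singles out the non-type-$A$ factor $H_1$, decomposes $V$ according to which minuscule weight of $H_1$ dominates each $H_1$-irreducible piece, and writes the projection $\theta'$ of $\theta$ to $\Char(H_2\times\cdots\times H_M\times D)_\QQ$ as a sum of at most $r$ weights by Carath\'eodory, with $r$ the rank of that factor. Each of these $r$ weights is paired with a specific minuscule representation of $H_1$, and Proposition~\ref{P:box-new} is applied \emph{termwise} using the degeneracy $d_j$ of that particular minuscule representation. This exhibits $\theta$ with at most $r+\sum_j d_j$ weights, and one checks from Table~\ref{T:degeneracies} that $\sum_j d_j<\rk(H_1)$ for every non-type-$A$ group except $D_5$ and $D_7$; those two require the extra observation (Lemma~\ref{L:later}) that all three minuscule degeneracy realizations live in a common $4$-dimensional subspace, followed by a second application of Carath\'eodory. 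The essential point is that the paper keeps track of which $H_1$-representation each piece of $\theta'$ is attached to and uses the degeneracy of \emph{that} representation, rather than a single degeneracy of $V|_H$.
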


The proof of Theorem \ref{thm:necessary} will occupy the remainder of this subsection. By Theorem \ref{thm:structure} there is a central isogeny
\[
H \times D \to G
\]
where $H$ is a product of simply connected almost-simple groups and $D$ is the maximal central torus of $G$. 
By the same theorem the representation $(V, G)$ is Weyl-generic if and only if $(V, H \times D)$ is. 
Moreover since $D$ is the maximal torus of $Z(G)$, we have that $\Char(Z(G))_\QQ \to \Char(D)_\QQ$ is an isomorphism preserving the weights of $V$. So it suffices to prove the theorem in the case $G = H \times D$ with $H$ and $D$ as above. This is done in Lemmas \ref{lem:typeA} and \ref{lem:fullrank} below. The following lemma will be used in their proof.

\begin{lemma}\label{lem:ample-cone}
Let $G = H \times D$, where $H$ is semisimple with maximal torus $T$ and $D$ is a torus. 
\begin{itemize}
\item[(i)]There is a natural identification of Weyl groups $W(T, H) = W(T \times D, G)$, and
\[
\Char(T \times D)_\QQ^W = \Char(H)_\QQ \times \Char(D)_\QQ.
\]
\item[(ii)] Let $V$ be a representation of $G$ and let $\{\alpha_i \}_{i \in I}$ be the weights of $V$ as a $D$-representation. Then \[\Sigma(T \times D)^W = 0 \times \Cone(\{\alpha_i\}_{i \in I}).\]
\end{itemize}
\end{lemma}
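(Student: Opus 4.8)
The plan is to split into the two parts, each of which should follow quickly from structure theory and the explicit toric descriptions already established. For part (i), I would first recall that for a product $G = H \times D$ a maximal torus is $T \times D$ where $T \subseteq H$ is maximal, and the Weyl group of a product is the product of Weyl groups; since $D$ is a torus, $W(D, D)$ is trivial, giving the identification $W(T,H) = W(T\times D, G)$. Then I would observe that $\Char(T \times D)_\QQ = \Char(T)_\QQ \times \Char(D)_\QQ$, with $W$ acting trivially on the second factor, so taking $W$-invariants commutes with the product and $\Char(T\times D)_\QQ^W = \Char(T)_\QQ^W \times \Char(D)_\QQ$. Finally, by Lemma \ref{lem:spice} applied to the semisimple group $H$, we have $\Char(T)_\QQ^W = \Char(H)_\QQ$, which gives the claimed formula. (One should note $\Char(H)_\QQ = 0$ since $H$ is semisimple, but the statement is cleaner written with $\Char(H)_\QQ$.)

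For part (ii), the key point is to relate $\Sigma$ for $T \times D$ to the toric picture of Lemma \ref{lem:toric}. By definition $\Sigma(V, T\times D)$ is the cone generated by the $(T\times D)$-weights $\{(\xi_i, \alpha_i)\}$ of $V$, where $\xi_i \in \Char(T)$ is the $T$-part and $\alpha_i \in \Char(D)$ is the $D$-part. We want to intersect this cone with the subspace $\Char(T)_\QQ^W \times \Char(D)_\QQ = 0 \times \Char(D)_\QQ$ (using that $H$ is semisimple, so the first factor is zero). So I must show $\Sigma(V, T\times D) \cap (0 \times \Char(D)_\QQ) = 0 \times \Cone(\{\alpha_i\})$. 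The containment $\supseteq$ is the substantive direction: given $\beta \in \Cone(\{\alpha_i\})$, I need $(0, \beta) \in \Sigma(V, T\times D)$, i.e.\ to write $(0,\beta)$ as a nonnegative combination of the $(\xi_i, \alpha_i)$. Here I would invoke Lemma \ref{lem:exists-a-sum}: there exist $a_\xi \geq 0$ summing to $1$ with $\sum a_\xi \xi = 0$ over the $T$-weights of $V$ — but I need to be a little careful, since that lemma is about $H$-weights and I want the $T$-part to vanish while building up a prescribed $D$-part. The cleaner approach: write $\beta = \sum b_i \alpha_i$ with $b_i \geq 0$; this gives $\sum b_i (\xi_i, \alpha_i) = (\eta, \beta)$ for $\eta = \sum b_i \xi_i \in \Char(T)_\QQ$, and then use Lemma \ref{lem:exists-a-sum} (or rather the $W$-averaging trick in its proof, applied to $-\eta$ viewed inside the cone of $T$-weights of $V$, which is $W$-stable) to cancel off $\eta$ by adding a further nonnegative combination of the $\xi_i$ whose $D$-parts contribute something, and iterate/rescale. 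The containment $\subseteq$ is immediate: any element of $\Sigma(V, T\times D)$ has the form $\sum c_i(\xi_i,\alpha_i)$, and its $D$-component $\sum c_i \alpha_i$ lies in $\Cone(\{\alpha_i\})$.

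The main obstacle I anticipate is exactly this $\supseteq$ direction of part (ii): matching a prescribed $D$-weight while forcing the $T$-weight to be zero, using only nonnegative coefficients. The subtlety is that the $\xi_i$ one adds to kill $\eta$ drag along their own $D$-weights $\alpha_i$, so one does not get $(0,\beta)$ on the nose but $(0, \beta')$ for some $\beta'$ in the cone. The fix is a limiting/scaling argument: by the $W$-invariance argument of Lemma \ref{lem:exists-a-sum}, $0$ lies in the relative interior (in an appropriate sense) of the cone of $T$-weights, so one can absorb $\eta$ at arbitrarily small "cost" in the $D$-direction, or alternatively scale $\beta$ up first and use that $\Sigma$ is a cone. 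I expect a short lemma-style argument here — possibly just: the image of $\Sigma(V, T\times D)$ under projection to $\Char(D)_\QQ$ is all of $\Cone(\{\alpha_i\})$ (clear), and the fiber over any $\beta$ meeting $0 \times \Char(D)_\QQ$ is nonempty precisely because $0$ is in the cone generated by the $T$-weights (Lemma \ref{lem:exists-a-sum}), combined with convexity. Everything else is bookkeeping with the product structure and Lemma \ref{lem:spice}.
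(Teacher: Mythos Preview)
Your part (i) is correct and essentially identical to the paper's argument: product structure of the normalizer, triviality of $W(D,D)$, the Weyl action on $\Char(T)_\QQ \times \Char(D)_\QQ$ being trivial on the second factor, and Lemma \ref{lem:spice} applied to the semisimple group $H$.

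For part (ii), your forward containment is fine. The backward containment, however, has a gap that you yourself flag but do not close. Your plan is to write $\beta = \sum b_i \alpha_i$, obtain $(\eta,\beta) \in \Sigma$, and then cancel $\eta$ by adding a further nonnegative combination of weights. As you note, any such combination drags along $D$-weights, producing $(0,\beta+\gamma)$ rather than $(0,\beta)$; your proposed ``limiting/scaling'' fix does not work, since scaling the $\eta$-killing combination scales the unwanted $\gamma$ by the same factor, and neither the closedness of $\Sigma$ nor the fact that $0$ lies in the cone of $T$-weights gives you control over the fiber of the projection to $\Char(D)_\QQ$ at a \emph{prescribed} $\beta$.

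The missing idea is simple and you were circling it: since $\Sigma$ is a cone, it suffices to show $(0,\alpha_i) \in \Sigma$ for each individual $D$-weight $\alpha_i$. Now observe that the $\alpha_i$-weight space $V_i \subseteq V$ is an $H$-subrepresentation, so Lemma \ref{lem:exists-a-sum} applies to $V_i$ and produces $T$-weights $\xi_j^i$ of $V_i$ and nonnegative $a_j$ with $\sum_j a_j = 1$ and $\sum_j a_j \xi_j^i = 0$. Since each $(\xi_j^i,\alpha_i)$ is a $T\times D$-weight of $V$ with the \emph{same} $D$-part $\alpha_i$, we get $(0,\alpha_i) = \sum_j a_j(\xi_j^i,\alpha_i) \in \Sigma$ with no drag at all. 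This is exactly the paper's argument. (Equivalently, your ``$W$-averaging trick'' works directly once you restrict to a single $D$-weight: if $(\xi_0,\alpha_i)$ is a weight then so is $(w\xi_0,\alpha_i)$ for every $w$, and summing over $W$ gives $(0,|W|\alpha_i)$.)
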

\begin{proof}
For (i), note first that there is an equality
\[
N_{H\times D}(T \times D) = N_H(T) \times D
\]
leading to a canonical identification of Weyl groups $W(T \times D, H \times D) = W(T, H)$. If $(\xi, \alpha)$ is an element of $\Char(T)_\QQ \times \Char(D)_\QQ \simeq \Char(T \times D)_\QQ$, then $w \in W = W(T, H)$ acts by 
\[
w \,\cdot\, (\xi, \alpha) = (w\,\cdot\,\xi, \;\alpha).
\]
Now the result follows from Lemma \ref{lem:spice}.

For (ii) the forward containment follows from (i) and Lemma \ref{lem:toric}. For the backwards containment we must show that $(0, \alpha_i)$ is in $ \Sigma(T \times D)$ for all $i \in I$. Let $V_i \subseteq V$ be the weight space of $\alpha_i$; it is a representation of $H$. By Lemma \ref{lem:exists-a-sum} we can find $T$-weights $\{\xi_j^i\}_{j=1}^m$ of $V_i$ and rationals $a_j \in \QQ_{\geq 0}$ satisfying $\sum_{j=1}^m a_j \xi_j^i = 0$ and $\sum_{j=1}^m a_j = 1$. Then $(\xi_j^i, \alpha_i)$ is a $T \times D$-weight of $V$ for all $j$, and
\[
(0, \alpha_i) = \sum_{j=1}^m a_j(\xi_j^i, \alpha_i),
\]
showing $(0, \alpha_i) \in \Sigma(T \times D).$
\end{proof}

\begin{lemma}\label{lem:typeA}
Let $G = H_1 \times \ldots \times H_M \times D$ and let $V$ be a representation of $G$, where each $H_i$ is simply-connected almost simple, $D$ is a torus, and $H_1$ does not have type $A$. Let $T \subseteq \prod_{i=1}^M H_i$ be a maximal torus. Then $(V, T \times D)$ is not Weyl-generic. \end{lemma}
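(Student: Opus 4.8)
The plan is to show that $\Sigma(V, T \times D)^W$ is contained in $\omega(V, T \times D)$, using the structure results for degeneracies of non-type-$A$ groups established in the previous section. By Lemma \ref{lem:ample-cone}(i), $\Char(T \times D)_\QQ^W = \Char(H)_\QQ \times \Char(D)_\QQ = \Char(D)_\QQ$ (since $H$ is semisimple), and by part (ii) of the same lemma, $\Sigma(V, T \times D)^W = 0 \times \Cone(\{\alpha_i\}_{i \in I})$, where the $\alpha_i$ are the $D$-weights of $V$. So a general element of $\Sigma(V, T \times D)^W$ has the form $\theta = (0, \alpha)$ with $\alpha \in \Cone(\{\alpha_i\})$. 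I need to produce, for each such $\theta$, a point $x \in V^{ss}_\theta(T \times D)$ with a positive-dimensional stabilizer; by Lemma \ref{lem:toric} this amounts to exhibiting a subset $S$ of weights of $V$ (as a $T \times D$-representation) such that $\theta \in \Cone(S)$ and $S$ lies in a proper linear subspace of $\Char(T \times D)_\QQ$ (so that the corresponding coordinate point has a one-parameter subgroup of $T \times D$ in its stabilizer).

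The key input is that $H_1$ is not of type $A$. Decompose $V$ into $H$-isotypic (or at least $H$-irreducible) summands; restricting to the factor $H_1$, write $V = \bigoplus_k V_k$ where each $V_k$ is an irreducible $H_1$-representation tensored with a multiplicity space that carries an action of $\prod_{i \geq 2} H_i \times D$. First I would reduce to the case where $\alpha = \alpha_{i_0}$ is a single $D$-weight: given $\alpha = \sum a_i \alpha_i$, it suffices to handle each $(0, \alpha_i)$ separately and take unions/positive combinations, as in the proof of Lemma \ref{lem:ample-cone}(ii). So fix a $D$-weight $\alpha_{i_0}$ and let $V_{i_0}$ be its weight space, a representation of $H = \prod H_i$. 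By Lemmas \ref{L:degeneracies} and \ref{lem:SLn}, every irreducible representation of a simply connected almost-simple group not of type $A$ has degeneracy strictly less than its rank; more precisely, restricting the $H$-action on $V_{i_0}$ to $H_1$, there is a realization of the degeneracy of the $H_1$-action using weights that span a subspace of $\Char(T_1)_\QQ$ of dimension at most $\operatorname{rank}(H_1) - 1$ (using Table \ref{T:degeneracies}, and Lemma \ref{L:later} in the troublesome $D_n$, $n$ odd, cases, together with Lemma \ref{lem:degeneracy-properties}(i),(iii) to pass from minuscule representations to arbitrary ones). Pulling the $T_1$-weights back to $T \times D$-weights of $V$ lying over $\alpha_{i_0}$, I obtain a finite set $S$ of weights of $V$ with $(0, \alpha_{i_0}) = \sum_{\xi \in S} a_\xi \xi$ (with $a_\xi \geq 0$, at least one positive) such that the $T_1$-components of the elements of $S$ span a proper subspace $U_1 \subsetneq \Char(T_1)_\QQ$. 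Then $S$ itself spans a subspace of $\Char(T \times D)_\QQ$ contained in $U_1 \times \Char(T_2 \times \cdots \times T_M \times D)_\QQ$, which is proper since $U_1$ is proper; picking any nonzero $\lambda$ in $(T_1 \times \cdots \times T_M \times D)$-coweights annihilating this subspace and supported in the $T_1$ factor gives the desired positive-dimensional stabilizer.

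The main obstacle I anticipate is the bookkeeping needed to guarantee that the realization of the degeneracy of an \emph{arbitrary} irreducible (or reducible) representation of $H_1$ — not just a minuscule one — can be chosen to span a subspace of dimension $\leq \operatorname{rank}(H_1) - 1$, rather than merely having degeneracy $\leq \operatorname{rank}(H_1) - 1$ (these differ: the degeneracy is about the minimal dimension of a \emph{subspace} spanned by weights, per Lemma \ref{lem:degen}, so actually they agree — but one must check the realization sits inside $\Char(T_1)_\QQ$ and not just abstractly). By Lemma \ref{lem:degen} the degeneracy \emph{is} the minimal dimension of a coordinate subspace of $\Char(T_1)_\QQ$ spanned by weights, so the subspace $U_1$ automatically has dimension $= \degen_{H_1}(V_{i_0}|_{H_1}) \leq \operatorname{rank}(H_1) - 1$, and the point is simply to invoke the classification. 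The remaining care is: (a) the passage from a general representation to a minuscule one via the dominance order (Lemma \ref{lem:degeneracy-properties}(iii) and Remark \ref{rmk:finding-weights}), making sure the relevant minuscule or dominance-minimal weight always exists and always has the claimed small degeneracy — for $E_8, F_4, G_2$ the dominance-minimal weight is $0$, so $0$ is a weight of $V_{i_0}|_{H_1}$ and the degeneracy is $0$, which is the strongest possible; and (b) ensuring that when we tensor with the multiplicity space for $\prod_{i\geq 2}H_i \times D$, the $T_1$-weights appearing are exactly (Weyl translates of) the $H_1$-weights we used, so the spanning-dimension bound in the $T_1$-factor is unaffected. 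None of this is deep, but it requires threading the classification carefully.
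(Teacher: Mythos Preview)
Your core idea---exhibit a proper subspace $U_1 \subsetneq \Char(T_1)_\QQ$ and show every Weyl-invariant $\theta$ lies in the cone of weights whose $T_1$-projections land in $U_1$---is sound, but your reduction to a single $D$-weight $\alpha_{i_0}$ breaks. The set $\omega(V, T\times D)$ is a finite union of cones, not itself a cone, so knowing $(0,\alpha_i)\in\omega$ for each $i$ does \emph{not} give $(0,\alpha)=\sum c_i(0,\alpha_i)\in\omega$; the analogy with Lemma~\ref{lem:ample-cone}(ii) fails because $\Sigma$ is a cone while $\omega$ is not. For your argument to go through, the witnessing weight-sets $S_i$ must all have $T_1$-projections in the \emph{same} proper $U_1$, independent of $i$ and of which irreducible $H$-summand of $V$ they come from. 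Such a common $U_1$ does exist---for every non-type-$A$ simple group, the degeneracies of all minuscule (and trivial) representations can be realized inside one proper subspace of $\Char(T_1)_\QQ$---but verifying this type by type is the essential content of the lemma, and you only gesture at it for odd $D_n$ via Lemma~\ref{L:later}. Separately, your ``pulling back'' step is incomplete: lifting a $T_1$-degeneracy $\sum a_k\xi_1^{(k)}=0$ to an expression $(0,\ldots,0,\alpha_{i_0}) = \sum a_\xi\,\xi$ in $\Char(T\times D)_\QQ$ also requires making the $T_2\times\cdots\times T_M$-components sum to zero, which is not automatic and needs e.g.\ a product with degeneracy realizations for the other factors (Lemma~\ref{lem:exists-a-sum}) or an iteration of Proposition~\ref{P:box-new}.

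For comparison, the paper takes a different route: rather than fixing a subspace, it \emph{counts} weights. It writes $\theta'$ as a nonnegative combination of at most $r=\rk(H_2\times\cdots\times H_M\times D)$ weights of the $Y^{(j)}_i$-factors (Carath\'eodory), groups these by the minuscule class $j$ of the companion $H_1$-factor, and uses Proposition~\ref{P:box-new} to pair each group with the degeneracy of the corresponding minuscule representation, yielding at most $r+\sum_j d_j$ weights of $V$ in total. The check $\sum_j d_j < \rk(H_1)$ is then read off from Table~\ref{T:degeneracies}, with a separate subspace argument (Lemma~\ref{L:later} plus Carath\'eodory) needed only for $D_5$ and $D_7$. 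Your approach, once the common-$U_1$ step is filled in, would handle all types uniformly and avoid that special case.
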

\begin{proof}
Let $\lambda_1, \ldots, \lambda_m$ be the minuscule weights of $H_1$ (see Table \ref{T:degeneracies}) and let $\lambda_0 = 0$. Let $X_{\lambda_i}$ be the corresponding irreducible representations of $H_1$. The representation $V$ may be written
\[
V = \left(\bigoplus_{i \in I_0} X^{(0)}_i \otimes Y^{(0)}_i\right) \oplus \left(\bigoplus_{i\in I_1} X^{(1)}_i \otimes Y^{(1)}_i\right) \ldots \oplus \left(\bigoplus_{i\in I_m} X^{(m)}_i \otimes Y^{(m)}_i\right)
\]
where each $Y^{(j)}_i$ is a representation of $H_2 \times \ldots \times H_M \times D$ and each $X^{(j)}_i$ is an irreducible representation of $H_1$ whose weights contain the weights of $X_{\lambda_j}$. Suppose $\theta \in \Sigma(T \times D)$ is Weyl-invariant, so by Lemma \ref{lem:ample-cone} the character $\theta$ can be written $(0, \theta')$ where $\theta'$ is a character of $H_2 \times \ldots \times H_M \times D$ and 0 is the trivial character of $H_1$. 
Then $\theta'$ is in the cone generated by the weights of the representations $Y^{(j)}_i$; i.e.,
\[
\theta' = \sum_{\ell=1}^{r_0} c^{(0)}_\ell \xi^{(0)}_\ell + \ldots + \sum_{\ell=1}^{r_m} c^{(m)}_\ell \xi^{(m)}_\ell
\]
where each $\xi^{(j)}_\ell$ is a weight of one of the $Y^{(j)}_i$ and $c^{(j)}_\ell \in \QQ_{\geq 0}$. By Lemma \ref{lem:car} we may assume
\[
r_0 + r_1 + \ldots + r_m \leq r
\]
where $r$ is the rank of $H_2 \times \ldots \times H_M \times D$ (the dimension of the vector space where $\theta'$ lives).

Define
\[
(\theta')^{(j)} = \sum_{\ell=1}^{r_j} c^{(j)}_\ell \xi^{(j)}_\ell  \quad \quad \quad \text{so } \theta' = (\theta')^{(0)} + \ldots + (\theta')^{(m)}.
\]
By Proposition \ref{P:box-new}, the vector $(0, (\theta')^{(j)})$ can be written as a nonnegative linear combination of at most $r_j + d_j$ weights of $V$, where $d_0 = 0$ and $d_j$ is the degeneracy bound for $X_{\lambda_j}$ appearing in Table \ref{T:degeneracies}. Therefore $\theta$ can be written as a nonnegative linear combination of at most
\[
\sum_{j=0}^m r_j + \sum_{j=0}^m d_j = r + \sum_{j=1}^m d_j 
\]
weights.
For all groups except $H_1 = D_5$ and $D_7$, one checks that $\sum_{j=1}^m d_j$ is strictly less than the rank of $H_1$, and the proof is complete.

For $H_1 = D_5$ or $D_7$, we have $\sum_{j=1}^m d_j = 7$. Therefore a priori we have written $\theta$ as a nonnegative linear combination of at most $r + 7$ weights of $V$. However by Lemma \ref{L:later} we can choose these weights so that their projections to $\chi(T_1)_\QQ$ lie in a subspace of dimension 4, where we set $T_1 := T \cap H_1$. Since the projections of these weights to $\chi(T_2 \times \ldots \times T_M \times D)_\QQ$ lie in a subspace of dimension $r$ (namely the whole space), the weights themselves lie in a subspace of $\chi(T \times D)_\QQ$ of dimension $r+4$. By Lemma \ref{lem:car} the vector $\theta$ is a nonnegative linear combination of at most $r+4$ of these weights, and $r + 4 < \rk(G)$ in this case.

\end{proof}

\begin{lemma}\label{lem:fullrank}
Let $G = H \times D$ where $H$ is semisimple with maximal torus $T$ and $D$ is a torus. Let $V$ be a representation of $G$ and let $\{\alpha_i \}_{i \in I}$ be the weights of $V$ as a $D$-representation. If
\[
\dim \Cone(\{\alpha_i\}_{i \in I}) < \dim \Char(D)_\QQ
\]
then $(V, T \times D)$ is not Weyl-generic.
\end{lemma}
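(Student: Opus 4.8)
The plan is to prove the slightly stronger statement that, under the hypothesis, the \emph{entire} semistable cone $\Sigma(V, T\times D)$ is contained in $\omega(V, T\times D)$; this suffices since $\Sigma(V, T\times D)^W \subseteq \Sigma(V, T\times D)$. Here $W$ denotes the Weyl group of $H$, which by Lemma \ref{lem:ample-cone}(i) is the Weyl group of $G = H\times D$ acting on $\Char(T\times D)_\QQ$.

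First I would identify the subspace in which all the relevant data lives. Decomposing $V$ into $T\times D$-weight spaces and using $\Char(T\times D)_\QQ = \Char(T)_\QQ \oplus \Char(D)_\QQ$, every weight of $V$ has the form $(\xi,\alpha)$ with $\alpha$ one of the $D$-weights $\alpha_i$. Consequently every weight of $V$ lies in the linear subspace
\[
U := \Char(T)_\QQ \times \mathrm{span}_\QQ\big(\{\alpha_i\}_{i\in I}\big) \subseteq \Char(T\times D)_\QQ .
\]
Since $U$ is a subspace, $\Sigma(V, T\times D) = \Cone(\text{weights of }V)$ is contained in $U$, and
\[
\dim U = \rk(H) + \dim\Cone\big(\{\alpha_i\}_{i\in I}\big) < \rk(H) + \dim\Char(D)_\QQ = \rk(T\times D),
\]
where the strict inequality is precisely the hypothesis. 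Write $k := \rk(T\times D)$, so $\dim U \le k-1$.

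The proof then finishes with Carath\'eodory. Let $\theta \in \Sigma(V, T\times D)^W$. Then $\theta$ belongs to $\Cone(\text{weights of }V)$, a cone spanning a subspace of dimension at most $\dim U \le k-1$. By Lemma \ref{lem:car}, $\theta$ is a nonnegative rational combination of at most $k-1$ of the weights of $V$; by the formula for $\omega(V, T\times D)$ in Lemma \ref{lem:toric} this says exactly that $\theta \in \omega(V, T\times D)$. As $\theta$ was arbitrary we conclude $\Sigma(V, T\times D)^W \subseteq \omega(V, T\times D)$, i.e. $(V, T\times D)$ is not Weyl-generic.

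I do not expect any genuine obstacle: the content is the single observation that every $T\times D$-weight of $V$ has its $D$-component among the $\alpha_i$, so the semistable cone is trapped inside the low-dimensional subspace $U$, after which Carath\'eodory does the rest. The only points requiring a moment's care are the degenerate cases $\theta = 0$ and $\mathrm{span}_\QQ(\{\alpha_i\}) = 0$, both already covered by the same argument (using $\Cone(\emptyset) = \{0\}$ and $k \ge 1$). One could alternatively begin by invoking Lemma \ref{lem:ample-cone}(ii) to describe $\Sigma(V, T\times D)^W$ explicitly as $0 \times \Cone(\{\alpha_i\})$, but the argument above does not require this.
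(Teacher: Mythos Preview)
Your proposal is correct and follows essentially the same argument as the paper: observe that all $T\times D$-weights of $V$ lie in a subspace of dimension at most $\rk(T\times D)-1$, then apply Carath\'eodory (Lemma~\ref{lem:car}) together with the description of $\omega(V,T\times D)$ in Lemma~\ref{lem:toric} to conclude that every $\theta\in\Sigma(V,T\times D)$ already lies in $\omega(V,T\times D)$. The paper's proof is the same, only more terse (it uses $n$ and $k$ for the ranks of $T$ and $D$ separately rather than your $k=\rk(T\times D)$).
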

\begin{proof} Let $k$ be the rank of $D$ and let $n$ be the rank of $T$. If the dimension of $\Cone(\{a_i\}_{i \in I})$ is at most $k-1$, then the $T \times D$ weights of $V$ generate a cone of dimension at most $n+k-1$. In particular, by Carath\'eodory's theorem (Lemma \ref{lem:car}) and Lemma \ref{lem:toric} every point of $\Sigma(V, T \times D)$ is contained in $\omega(V, T \times D)$. So $\Sigma(V, T \times D)^W$ is also contained in $\omega(V, T \times D)$.
\end{proof}

\subsection{Sufficient conditions}
Our main tool for constructing Weyl-generic representations is the following definition.
Recall from Remark \ref{rmk:bound} that the degeneracy $\degen(V)$ of a representation $V$ of a semisimple group $H$ is at most the rank of $H$.
\begin{definition}\label{def:degen}
Let $H$ be a semisimple group. A representation $V$ of $H$ is \emph{nondegenerate} if
\[
\degen(V) = \rk(H).
\]
\end{definition}

In Proposition \ref{prop:pieces} we completely classify all irreducible nondegnerate representations (in particular, they only exist when $H$ has type $A$). But first, we show their usefulness by giving one set of sufficient conditions for Weyl-genericity.

\begin{theorem}\label{thm:sufficient}
Let $G=H\times D$ be a product of a semisimple group and a torus. Let $X_+$ be a nondegenerate representation of $H$, let $Y$ be a representation of $D$, and let $Z$ be a representation of $G$. Assume that
\begin{itemize}
\item The cone of weights of $Y$ has full dimension.
\item There is a vector $\nu \in \chi(D)_\QQ^\vee$ such that 
\[
\langle \nu, \alpha^+ \rangle \geq 0 \quad \quad \quad \quad \langle \nu, \alpha^-\rangle < 0
\]
for all $D$-weights $\alpha^+$ of $Y$ and $\alpha^-$ of $Z$.
\end{itemize}
    For any integer $t>0$, let $Z^{(t)}$ denote the representation of $G$ obtained by scaling $D$-weights of $Z$ by $t$. 
    Then, for all $t\gg0$, the representation $(X_+\otimes Y) \oplus Z^{(t)} $ is Weyl-generic.
\end{theorem}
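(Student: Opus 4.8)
The plan is to exhibit a single Weyl-invariant character that lies in the semistable cone but outside $\omega$. Write $n=\rk(H)$, $k=\rk(D)$, and let $T\subseteq H$ be a maximal torus, so $T\times D$ is a maximal torus of $G$. By Lemma~\ref{lem:ample-cone} a Weyl-invariant character of $T\times D$ has the form $(0,\theta')$ with $\theta'\in\chi(D)_\QQ$, and $\Sigma(V,T\times D)^W=\{0\}\times\Cone(\{D\text{-weights of }V\})$, which is a $k$-dimensional cone since the weight cone of $Y$ is full-dimensional. By the toric description of $\omega$ in Lemma~\ref{lem:toric} applied to $T\times D$, it suffices to find $\theta'$ with $(0,\theta')$ in that cone but not in $\Cone(S)$ for any multiset $S$ of at most $n+k-1$ weights of $V$. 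I will take $\theta'$ in the \emph{interior} of $\Cone(\{D\text{-weights of }Y\})$ — so that $\langle\nu,\theta'\rangle>0$ (this cone is full-dimensional and contained in the halfspace $\langle\nu,\cdot\rangle\geq 0$, hence has no interior point on $\nu^\perp$) — and \emph{generic} there, meaning chosen so as to avoid a certain finite list of proper linear subspaces of $\chi(D)_\QQ$ produced below.

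First consider the multisets $S$ consisting only of weights of $X_+\otimes Y$; such a weight is $(\xi,\beta)$ with $\xi$ a $T$-weight of $X_+$ and $\beta$ a $D$-weight of $Y$, and this case uses nothing about $t$. If $(0,\theta')=\sum_j c_j(\xi_j,\beta_j)$ is a minimal such expression, all $c_j>0$, then the $\chi(T)_\QQ$-component gives $\sum_j c_j\xi_j=0$ with $\sum_j c_j>0$; exactly as in the proof of Lemma~\ref{lem:degen} this forces $\Cone(\{\xi_j\})$ to be a linear subspace, and by Lemma~\ref{lem:degen} and $\degen(X_+)=n$ this subspace has dimension $n$, so the $\xi_j$ span $\chi(T)_\QQ$. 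Consequently $\dim\bigl(\operatorname{span}S\cap(\{0\}\times\chi(D)_\QQ)\bigr)=\dim\operatorname{span}S-n\le|S|-n$, so if $|S|\le n+k-1$ then $(0,\theta')$ lies in a subspace of $\{0\}\times\chi(D)_\QQ$ of dimension $\le k-1$. There are only finitely many such subspaces (one for each combinatorial choice of $S$ and of the weights $\xi_j$), and we put all of them on the avoidance list for $\theta'$. (In particular, this step alone shows that $X_+\otimes Y$ is already Weyl-generic.)

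The heart of the proof is to rule out, for $t\gg 0$, the multisets $S$ that use at least one weight of $Z^{(t)}$; such a weight has the form $(\zeta,t\gamma)$ with $\langle\nu,\gamma\rangle<0$, whereas $\langle\nu,\theta'\rangle>0$ and $\langle\nu,\beta\rangle\ge 0$ for every $D$-weight $\beta$ of $Y$. If no threshold $t_0$ worked, there would be a sequence $t_m\to\infty$ and, after passing to a subsequence of fixed combinatorial type, decompositions $(0,\theta')=\sum_{j\in A}c^{(m)}_j(\xi_j,\beta_j)+\sum_{l\in B}c'^{(m)}_l(\zeta_l,t_m\gamma_l)$ with $B\ne\emptyset$, $|A|+|B|\le n+k-1$, and all coefficients positive. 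Pairing the $\chi(D)_\QQ$-component with $\nu$ gives $\sum_A c^{(m)}_j\langle\nu,\beta_j\rangle=\langle\nu,\theta'\rangle+t_m\sum_B c'^{(m)}_l\,|\langle\nu,\gamma_l\rangle|$, which controls the growth of the auxiliary sequences; after normalizing and passing to a further limit, each weight of $Z^{(t_m)}$ contributes in the limit as the ``phantom weight'' $(0,\gamma_l)$ with a nonnegative coefficient. One then finds a positive null combination of $T$-weights of $X_+$, which by nondegeneracy of $X_+$ spans $\chi(T)_\QQ$; feeding this back, $(0,\theta')$ lies in $\operatorname{span}\bigl(\{(\xi_j,\beta_j)\}_{j\in A}\cup\{(0,\gamma_l)\}_{l\in B}\bigr)\cap(\{0\}\times\chi(D)_\QQ)$, again a subspace of dimension $\le|A|+|B|-n\le k-1$; adjoining these finitely many subspaces to the avoidance list for $\theta'$ yields the contradiction. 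The delicate point, which I expect to be the real work, is that the coefficient sequences need not stay bounded: the unbounded regime must be handled either by a two-scale normalization or, alternatively, by a specialization argument — if $(0,\theta')\in\operatorname{span}(\{(\xi_j,\beta_j)\}\cup\{(\zeta_l,t\gamma_l)\})$ for infinitely many $t$ then it does so for all $t$, and one extracts the $t\to\infty$ leading term — and it is from such an argument that an effective lower bound on $t$, as promised in Example~\ref{ex:sufficient}, should emerge.
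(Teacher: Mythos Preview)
Your first case (walls built only from weights of $X_+\otimes Y$) is correct and matches the paper's dimension argument: a positive null combination $\sum c_j\xi_j=0$ forces the $\xi_j$ to span $\chi(T)_\QQ$ by nondegeneracy, so the Weyl-invariant part of such a wall lies in a subspace of $\{0\}\times\chi(D)_\QQ$ of dimension at most $k-1$, and there are only finitely many such subspaces.

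The second case is where your approach diverges from the paper's, and the gap you flag is genuine. Your limiting argument is valid only when the coefficients $c_j^{(m)}$ stay bounded: then the $\nu$-pairing forces each $c_l'^{(m)}\to 0$, the phantom weights $(0,\gamma_l)$ appear, and $(0,\theta')$ lands in the fixed subspace you describe. But in the unbounded regime, normalizing by $\max_j c_j^{(m)}$ sends the $D$-component to $0$ rather than $\theta'$, so you extract only a linear \emph{dependence} among $\{(\xi_j,\beta_j)\}\cup\{(0,\gamma_l)\}$, not membership of $(0,\theta')$ in their span. Neither proposed fix is carried out; the specialization idea in particular is delicate because the subspace $\operatorname{span}(\cW_t)\cap(\{0\}\times\chi(D)_\QQ)$ genuinely moves with $t$, and what one would need to control is its limit in a Grassmannian, which may strictly contain the span of the phantom vectors.

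The paper avoids this by two changes of strategy. First, it does not fix a single $\theta'$: it shows instead that, for $t$ large, each $\cW^W$ is either at most $(k-1)$-dimensional or disjoint from $0\times A^+$, so a finite union of such pieces cannot cover the $k$-dimensional cone $0\times A^+$. Second, the case split is on the dimension of the projection of $\cW$ to $\chi(T)_\QQ$, not on whether $Z$-weights are present. If this projection has dimension $n$, a rank count gives $\dim\cW^W\le k-1$ for every $t$, with no further work; this absorbs the mixed walls that cause trouble in your argument. If the projection has dimension at most $n-1$, nondegeneracy of $X_+$ forces every element of the auxiliary cone $C$ to have a nonzero $Z$-coordinate, and the paper writes down finitely many explicit ray generators of $\cW^W$ (equation \eqref{eq:newray}) on each of which $\nu$ is manifestly negative once $t$ exceeds a finite bound. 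This direct computation replaces your compactness argument and is precisely what makes the bound on $t$ effective in Example~\ref{ex:sufficient}.
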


A bound on $t$ such that $(X_+ \otimes Y) \oplus Z^{(t)}$ is Weyl-generic can in theory be computed in examples: see e.g. Example \ref{ex:sufficient}. We present the following corollary as an example of the theorem.
\begin{corollary}\label{cor:sufficient}
Let $G=H\times D$ be a product of a semisimple group and a torus, let $X_+$ be a nondegenerate representation of $H$, and let $Y$ be a representation of $D$ whose cone of weights has full dimension. Then $X_+ \otimes Y$ is Weyl-generic. 
\end{corollary}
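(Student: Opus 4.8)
The plan is to obtain Corollary~\ref{cor:sufficient} as the special case $Z=0$ of Theorem~\ref{thm:sufficient}. First I would check that the hypotheses of the theorem hold with $X_+$, $Y$ as given and $Z$ the zero representation of $G$. The first bullet is exactly the assumption that the cone of weights of $Y$ is full-dimensional. For the second bullet, note that the zero representation has no $D$-weights at all, so the requirement ``$\langle\nu,\alpha^-\rangle<0$ for all $D$-weights $\alpha^-$ of $Z$'' is vacuous; one may therefore take $\nu=0\in\chi(D)_\QQ^\vee$, for which $\langle\nu,\alpha^+\rangle=0\geq 0$ for every $D$-weight $\alpha^+$ of $Y$. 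Thus all hypotheses of Theorem~\ref{thm:sufficient} are satisfied.

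Next I would apply the theorem. It produces $t_0$ such that $(X_+\otimes Y)\oplus Z^{(t)}$ is Weyl-generic for all $t\geq t_0$. But $Z=0$ has an empty set of $D$-weights, so scaling those weights by $t$ does nothing: $Z^{(t)}=0$ for every $t$, and hence $(X_+\otimes Y)\oplus Z^{(t)}=X_+\otimes Y$. Taking any $t\geq t_0$ gives that $X_+\otimes Y$ is Weyl-generic, which is the claim.

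I do not expect a genuine obstacle: the content sits entirely in Theorem~\ref{thm:sufficient}, and the corollary is just the observation that its hypothesis on $Z$ is vacuously satisfiable. The one point meriting a moment's care is confirming that the theorem and its proof really do permit $Z$ to be the zero representation, i.e.\ that nowhere is a nonzero $D$-weight of $Z$ actually used. If one preferred a self-contained argument, one could reason directly: by Lemma~\ref{lem:ample-cone}(ii) one has $\Sigma(X_+\otimes Y,\,T\times D)^W=0\times\Cone(\{\alpha_i\})$, where the $\alpha_i$ are the $D$-weights of $Y$ (equivalently of $X_+\otimes Y$), a full-dimensional cone in $\chi(D)_\QQ$; and any relation $\sum_{l=1}^N c_l(\xi_l,\alpha_l)=(0,\theta')$ with all $c_l>0$ forces $\Cone(\{\xi_l\})$ to be a linear subspace of $\chi(T)_\QQ$ (since $-\xi_m=\sum_{l\neq m}(c_l/c_m)\xi_l$), hence to equal $\chi(T)_\QQ$ by nondegeneracy of $X_+$; this pins $\theta'$ into a subspace of $\chi(D)_\QQ$ of dimension at most $N-\rk(T)$, which is less than $\rk(D)$ whenever $N\leq\rk(T)+\rk(D)-1$. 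Since there are only finitely many such subspaces, any $\theta'$ in the interior of $\Cone(\{\alpha_i\})$ avoiding all of them gives a point of $\Sigma(X_+\otimes Y,\,T\times D)^W$ off every wall, witnessing Weyl-genericity.
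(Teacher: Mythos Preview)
Your proposal is correct and matches the paper's proof exactly: the paper also derives the corollary from Theorem~\ref{thm:sufficient} by setting $Z=0$ and $\nu=0$, noting that the condition on $\alpha^-$ is vacuous. Your additional self-contained sketch is essentially the ``first case'' of the proof of Theorem~\ref{thm:sufficient} (where the projection of a wall to $\chi(T)_\QQ$ is full-dimensional), which is indeed the only case that arises when $Z=0$.
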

\begin{proof}
This follows from Theorem \ref{thm:sufficient} by setting $Z=0$ and $\nu = 0$. Note that in this case the hypothesis that $\langle \nu, \alpha^- \rangle < 0$ for all weights $\alpha^-$ of $Z$ is vacuously true.
\end{proof}

\begin{remark}
Moduli of quiver representations provide many examples of Weyl-generic representations. For instance, quiver flag varieties \cite{craw} are examples of schemes that arise as GIT quotients of certain representations $(V, G)$ by a specific character $\theta$. However, such $V$ almost never contain nondegenerate subrepresentations of $H$ (see Example \ref{ex:quiver}). So moduli of quiver representations are a flavor of Weyl-generic representations somewhat orthogonal to those arising from Theorem \ref{thm:sufficient} and Corollary \ref{cor:sufficient}.
\end{remark}

\begin{remark}
In the setting of Theorem \ref{thm:sufficient}, suppose we have fixed $\theta \in \Sigma(X_+\otimes Y, T \times D)^W$. Since there is a containment
\[
\Sigma(X_+\otimes Y, T \times D)^W \subseteq \Sigma((X_+\otimes Y)\oplus Z^{(t)}, T \times D)^W
\]
for all $t$, it makes sense to ask if, by increasing $t$, we may assume $\theta$ is not in a wall of $(X_+\otimes Y)\oplus Z^{(t)}.$ This is not possible in general, as shown in example \ref{ex:walls}. 
\end{remark}

\begin{proof}[Proof of Theorem \ref{thm:sufficient}]
Let $n$ be the rank of $H$, let $k$ be the rank of $D$, and let $T \subseteq H$ be a maximal torus. Furthermore let $A^{+}$ (resp. $A^-$) denote the cone of $D$-weights of $Y$ (resp. $Z$). Note that we are assuming $\dim A^+= k$.

By Lemma \ref{lem:ample-cone} the cone $0 \times A^+$ is contained in 
\[\Sigma((X_+ \otimes Y) \oplus Z^{(t)},\; T \times D)^W\] for any $t$, so to show that $(X_+ \otimes Y) \oplus Z^{(t)}$ is Weyl-generic it is enough to show that $0 \times A^+$ is not contained in any torus wall (for large enough $t$).

Let $\cW$ be a torus wall, i.e. a cone generated by $n+k-1$ weights of $T \times D$ (possibly non-distinct). There are finitely many choices for $\cW$, and we will show that for each possibility we can choose $t$ large enough that $\cW^W:=\cW\cap(0\times\Char(D)_\QQ)$ does not contain $0 \times A^+$. There are two cases, based on the projection of $\cW$ to $\chi(T)_\QQ$ (note that this projection is independent of $t$). If this projected cone spans a vector space of dimension $n$, then since $\cW$ spans a vector space of dimension at most $n+k-1$, the kernel of the projection $\mathrm{span}(\cW) \to \chi(T)_\QQ$ has dimension at most $k-1$. This kernel is precisely $\cW^W$, so in particular the intersection of $\cW$ with $0 \times A^+$ has dimension at most $k-1$ and cannot contain $0 \times A^+$.

For the other case, when the projection of $\cW$ to $\chi(T)_\QQ$ has dimension at most $n-1$, the Weyl-invariant wall $\cW^W$ can have dimension $k$, but we will show it can be ``pushed off'' of $0 \times A^+$ by taking $t$ large enough. Denote the weights that generate $\cW$ by 
\[
(\xi_i^+, \alpha_i^+)_{i \in I} \quad \quad \quad \quad (\xi_j^-, t\alpha_j^-)_{j \in J},
\]
so $\xi^+_i$ is a $T$-weight of $X_+$, $\alpha_i^+$ is a $D$-weight of $Y$, and $\xi^-_j$ and $\alpha^-_j$ are respectively $H$ and $D$-weights of $Z^{(t)}$. Moreover $|I|+|J|=n+k-1$. 

We now compute $\cW^W$. Let $C$ denote the cone given by intersecting the kernel of the matrix
\[
[\xi^+_{i}: \xi^-_j]_{i \in I, j \in J}
\]
with the nonnegative orthant. (The matrix $[\xi^+_i: \xi^-j]_{i \in I, j \in J}$ has columns equal to the $\xi_i^+$ and $\xi_j^-$.) Note that this cone is independent of $t$. Write an element $\bc \in C$ in coordinates as $(c_i^+, c_j^-)_{i \in I, j \in J}$. Then $\cW^W$ is given by transforming this cone under the matrix of $D$-weights:
\[
\cW^W =0\times( [\alpha^+_i: t\alpha^-_j]_{i \in I, j \in J} C).
\]
A key observation is that if $\bc \in C$ then there is some index $j_0 \in J$ for which $c^-_{j_0} \neq 0$. Indeed, if this were not the case, then $\bc = (c^+_i, 0)_{i \in I}$ would define a degeneracy of $X^+$ of dimension at most $n-1$ (using our assumption that the projection of $\cW$ to $\chi(T)_\QQ$ has dimension at most $n-1$ and Carath\'eodory's theorem (Lemma \ref{lem:car})). This is a contradiction since $X^+$ is nondegenerate.

{Let $\{\bc(\ell)\}_\ell$ be a finite set of ray generators of $C$ and for each $\bc(\ell)=(c(\ell)_i^+,c(\ell)^-_j)_{i\in I,j\in J}$ let $j_{\ell}$ be the index of a nonzero coordinate of $\bc(\ell)$.} Then $\cW^W$ is generated by the rays
\begin{equation}\label{eq:newray}
t^{-1}\sum_{i \in I} \frac{c(\ell)^+_i}{ c(\ell)^-_{j_\ell}} (0,\alpha^+_i) + \left( (0,\alpha^-_{j_\ell}) + \sum_{j \in J,\; j \neq j_\ell} \frac{c(\ell)^-_j}{c(\ell)^-_{j_\ell}} (0,\alpha^-_j) \right).
\end{equation}
Observe that the value of $\nu$ on every vector in $0 \times A^+$ is nonnegative. On the other hand, The value of $\nu$ on \eqref{eq:newray} is
\[
t^{-1}\sum_{i \in I} \frac{c(\ell)^+_i}{ c(\ell)^-_{j_\ell}} \langle \nu, \alpha^+_i\rangle + \left( \langle  \nu, \alpha^-_{j_\ell} \rangle + \sum_{j \in J,\; j \neq j_\ell} \frac{c(\ell)^-_j}{c(\ell)^-_{j_\ell}} \langle \nu, \alpha^-_j\rangle \right).
\]
Since all $c(\ell)^{\pm}_{i/j}$ are nonnegative and all $\langle \nu, \alpha^-_{j} \rangle$ are negative, we see that by choosing $t$ large we can make this quantity negative. In particular, for $t$ large enough the value of $\nu$ is strictly negative on all ray generators \eqref{eq:newray} of $\cW^W$. It follows that the cone $\cW^W$ is disjoint from $0 \times A^+$ except possibly at the origin, and in particular does not contain $0 \times A^+$.
\end{proof}

The classification of the nondegenerate representations of semisimple groups uses the following equivalence relation on representations. Let $H = \prod_{i=1}^M SL(n_i)$ and let $V_1, V_2$ be two irreducible representations of $H$. Then we may write
\[
V_i = V_i^{(1)} \otimes V_i^{(2)} \otimes \ldots \otimes V_i^{(M)}
\]
where $V_i^{(j)}$ is an irreducible representation of $SL(n_j)$.
We say $V_1$ and $V_2$ are \emph{equivalent mod $SL(2)$}, writing
\[
V_1 \equiv V_2 \mod SL(2),
\]
if $V_1^{(j)} \simeq V_2^{(j)}$ whenever $n_j \neq 2$.

\begin{proposition}\label{prop:pieces}
Let $H = \prod_{i=1}^M H_i$ be a product of simply connected almost simple groups. If $V$ is a representation of $H$, then $V$ is nondegenerate if and only if
\begin{itemize}
\item[(i)] For all $i$, we have $H_i = SL(n_i)$ for some $n_i \geq 2$, and moreover the $n_j$ are pairwise coprime.
\item[(ii)] If we write $V$ as a direct sum of irreducible representations $\oplus_{j=1}^N V^{(j)}$ of $H$ then $V^{(j)} \cong V^{(j+1)} \mod SL(2)$ for all $j=1, \ldots, N-1$.
\item[(iii)] Each $V^{(j)}$ is a tensor product of nondegenerate representations of the $SL(n_i)$ (see Lemma \ref{lem:SLn}). 
\end{itemize}

\end{proposition}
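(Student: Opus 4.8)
The statement is an equivalence, so I would prove the two directions separately, built around the invariant $\degen$ and the lemmas already available.

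\textbf{Necessity of "type $A$" and of (iii).} Decompose $V=\bigoplus_{j=1}^N V^{(j)}$ into irreducibles, so $V^{(j)}=\bigotimes_{i=1}^M V^{(j),(i)}$ with $V^{(j),(i)}$ irreducible over $H_i$. Lemma \ref{lem:degeneracy-properties}(i),(iv) and Remark \ref{rmk:bound} give, for every $j$,
\[
\degen(V)\ \le\ \degen(V^{(j)})\ \le\ \sum_{i=1}^M\degen(V^{(j),(i)})\ \le\ \sum_{i=1}^M\rk(H_i)=\rk(H).
\]
If $V$ is nondegenerate, every inequality is an equality, so $\degen(V^{(j),(i)})=\rk(H_i)$ for all $i,j$. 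By Lemma \ref{L:degeneracies}, together with the observation that a non-minuscule dominant weight dominates $0$ or a minuscule weight and hence (via Lemma \ref{lem:degeneracy-properties}(iii), Lemma \ref{lem:weights-contained}, Remark \ref{rmk:finding-weights}) has degeneracy strictly below the rank, this forces every $H_i$ to have type $A$; and then Lemma \ref{lem:SLn} forces each $V^{(j),(i)}$ to be the standard representation of $SL(n_i)$, its dual, or—when $n_i=2$—an odd symmetric power of $\CC^2$. This gives the "$H_i=SL(n_i)$" part of (i) and all of (iii).

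\textbf{Necessity of coprimality and of (ii).} In each case I exhibit a set $S$ of weights of $V$ carrying a relation $\sum_{\xi\in S}a_\xi\xi=0$ with all $a_\xi>0$ and with $\operatorname{span}(S)$ proper; then $\Cone(S)=\operatorname{span}(S)$ is a proper subspace cone-generated by weights of $V$, so $\degen(V)<\rk(H)$. If $d=\gcd(n_{i_1},n_{i_2})>1$, split $\{1,\dots,n_{i_1}\}$ and $\{1,\dots,n_{i_2}\}$ into $d$ blocks apiece and pair the blocks; the resulting "block-diagonal" family of weights of $V^{(1),(i_1)}\otimes V^{(1),(i_2)}$ carries a positive relation and spans only a $\big((n_{i_1}-1)+(n_{i_2}-1)-(d-1)\big)$-dimensional subspace, and tensoring with zero-sum relations on the other factors (Lemma \ref{lem:exists-a-sum}) produces $S$ with $\operatorname{span}(S)$ of dimension $\le\rk(H)-(d-1)<\rk(H)$. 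For (ii): $\equiv\bmod SL(2)$ is an equivalence relation, so its failure means some $i_0$ with $n_{i_0}\ge 3$ has $V^{(j),(i_0)}$ standard and $V^{(j'),(i_0)}$ its dual; choosing a weight $e_1$ of the standard representation and combining a zero-sum relation for $\bigotimes_{i\ne i_0}V^{(j),(i)}$ paired with $e_1$ and one for $\bigotimes_{i\ne i_0}V^{(j'),(i)}$ paired with $-e_1$ (using $e_1+(-e_1)=0$) gives $S$ whose weights have $i_0$-components on the line $\QQ e_1$, so $\operatorname{span}(S)$ has dimension $\le\rk(H)-(n_{i_0}-1)+1<\rk(H)$.

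\textbf{Sufficiency, and the crux.} Assume (i),(ii),(iii). From Lemma \ref{lem:degen} one sees that nondegeneracy passes to subsets of weights: if $\Xi(V)\subseteq\Xi(V')$ and $V'$ is nondegenerate, so is $V$. For each $SL(2)$-factor $i$ set $L_i=\max_j\ell^{(j)}_i$, where $V^{(j),(i)}\cong\Sym^{2\ell^{(j)}_i+1}(\CC^2)$; since $\Xi(\Sym^{2\ell+1}(\CC^2))\subseteq\Xi(\Sym^{2L+1}(\CC^2))$ for $L\ge\ell$ and, by (ii), the factors of rank $\ge 2$ are already independent of $j$, we get $\Xi(V)\subseteq\Xi(\widehat V)$ with $\widehat V=\bigotimes_i\widehat U_i$ a \emph{single} irreducible representation, each $\widehat U_i$ a nondegenerate irreducible representation of $SL(n_i)$, the $n_i$ pairwise coprime. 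So it suffices to prove: a tensor product of nondegenerate irreducible representations of the $SL(n_i)$, with the $n_i$ pairwise coprime, is nondegenerate. Lemma \ref{lem:degeneracy-properties}(iv) gives "$\le\rk(H)$"; for "$\ge$", a realization of $\degen$ gives weights $\xi_k$ of $\bigotimes_iU_i$ with a positive relation $\sum_k a_k\xi_k=0$, and one must show $\operatorname{span}\{\xi_k\}=\Char(T)_\QQ$—equivalently, via Lemma \ref{lem:car}, that for no nonzero $\phi$ does $0$ lie in the convex hull of the weights of $\bigotimes_iU_i$ annihilated by $\phi$. I would induct on $M$ (base case Lemma \ref{lem:SLn}), writing $\bigotimes_iU_i=W\otimes U_M$ with $W$ the tensor over the first $M-1$ factors: a putative $\phi=(\phi_W,\phi_M)$ forcing a proper span must be nonzero in both components (if $\phi_M=0$, the relation descends to a positive relation in $W$ on which $\phi_W$ vanishes, against the inductive nondegeneracy of $W$); projecting to the $SL(n_M)$-factor then pins down the structure—each weight of $U_M$ occurs with mass $1/n_M$ when $U_M$ is standard or dual, and $U_M$'s weights are odd integers when $n_M=2$—after which a mass-balance/parity count, in which $\gcd\big(n_M,\prod_{i<M}n_i\big)=1$ is precisely the input that eliminates $\phi$, completes the step.

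\textbf{Where the difficulty lies.} The necessity half is bookkeeping once the explicit relations are chosen; the real obstacle is the crux—that a tensor of nondegenerate irreducibles over pairwise coprime $SL(n_i)$ is nondegenerate—and specifically its combinatorial core for $M\ge 3$. For $M=2$ it is clean: the support of the relevant uniform-marginal (``doubly stochastic''-type) array breaks into connected components, each carrying mass equal both to its size in $\{1,\dots,n_{i_1}\}$ over $n_{i_1}$ and to its size in $\{1,\dots,n_{i_2}\}$ over $n_{i_2}$, so coprimality forces one component and hence $\phi=0$. For $M\ge 3$ the fiberwise marginals are no longer uniform, so this must be replaced by—or bootstrapped up to—a combinatorial statement to the effect that the support of any measure with uniform marginals on $\prod_i\{1,\dots,n_i\}$, for pairwise coprime $n_i$, already spans $\Char(T)_\QQ$ once labelled by the corresponding weights of $\bigotimes_iU_i$. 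Establishing that statement is, I expect, the heart of the proof.
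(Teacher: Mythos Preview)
Your necessity arguments are correct and in fact your treatment of (ii) is cleaner than the paper's: the paper decomposes $H=H_{SL2}\times H_=\times H_\neq$ and argues via $\degen\big((V^{(1)}_{SL2}\otimes V_\neq)\oplus(V^{(2)}_{SL2}\otimes V_\neq^\vee)\big)\le 1$, whereas your direct $e_1/(-e_1)$ relation does the job in one line. Your coprimality argument and your reduction of sufficiency to a single irreducible tensor $\bigotimes_i U_i$ match the paper.

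The gap is exactly where you flag it: the crux for $M\ge 3$. Your inductive scheme is not completed, and it is not clear that the bare hypothesis ``$W$ is nondegenerate'' together with $\gcd(n_M,\prod_{i<M}n_i)=1$ suffices for the step---your mass-balance sketch uses the uniform-marginal structure of $W$, which is extra information beyond nondegeneracy, so the induction as stated does not close. The paper does \emph{not} induct on $M$. Instead it works directly with the $n_1\times\cdots\times n_M$ coefficient array and isolates a single combinatorial notion: call an array \emph{small} if, after permuting hyperplane slices, it is block-diagonal with a proper block $[1,m_1]\times\cdots\times[1,m_M]$ whose axis-sums already equal the global constants $a_i$. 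Two short lemmas finish the proof. First, pairwise coprimality forbids smallness: from $a_in_i=a_jn_j$ and $\gcd(n_i,n_j)=1$ one extracts a common factor, and the analogous identity $a_im_i=a_jm_j$ for the sub-block then forces $m_i=n_i$ for all $i$. Second, any non-small array has at least $1+\sum_i(n_i-1)$ nonzero entries: one grows a subarray $[1,m_1]\times\cdots\times[1,m_M]$ greedily, maintaining the invariant that it contains at least $1+\sum_i(m_i-1)$ nonzero entries, and non-smallness guarantees that as long as some $m_i<n_i$ one can enlarge some $m_j$ by $1$ while preserving the invariant. Your $M=2$ connected-components argument is essentially this pair of lemmas in the bipartite case (where ``not small'' coincides with ``support connected''); for $M\ge 3$ the greedy subarray argument is the missing idea that replaces your unfinished induction.
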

\begin{proof}
\noindent
\textit{Forward direction.} 
Assume $V$ is nondegenerate. 
Then for each $j=1, \ldots, N$, we have
\[
\rk(H) = \degen(V) \leq \degen(V^{(j)}) \leq \sum_{i=1}^M \degen(V_i^{(j)}) \leq \sum_{i=1}^M \rk(H_i)
\]
using Definition \ref{def:degen}, Lemma \ref{lem:degeneracy-properties}.(i), Lemma \ref{lem:degeneracy-properties}.(iv), and Remark \ref{rmk:bound}, respectively.
Since $\rk(H) = \sum_{i=1}^M \rk(H_i)$, we must have equality throughout. In fact, by Remark \ref{rmk:bound} we have
\[
\degen(V_i^{(j)}) = \rk(H_i) \quad \quad \quad \text{and} \quad \quad \quad \degen(V^{(j)}) = \rk(H)
\]
for all $j$ and $i$; i.e., $V^{(j)}$ and $V^{(j)}_i$ are nondegenerate representations of their respective groups. From Table \ref{T:degeneracies} and Lemma \ref{lem:SLn} it then follows that $H_i = SL(n_i)$ for some $n_i \geq 2$ and that part (iii) of the Proposition holds.

We now show that (ii) holds. Suppose for contradiction that there exist $j, j'$ with $V^{(j)} \not \equiv V^{(j')} \mod SL(2)$. Without loss of generality we can take $j=1$ and $j'=2$. Then we can write 
\[H = H_{SL2} \times H_= \times H_\neq, \quad \quad V^{(1)} = V_{SL2}^{(1)} \otimes V_= \otimes V_\neq,\quad \quad V^{(2)} = V_{SL2}^{(2)} \otimes V_= \otimes V_\neq^{\vee},
\]
where $H_{SL2}$ is the product of all the $SL(2)$ factors of $H$ and $H_=$ and $H_\neq$ are products of the $SL(n_i)$ factors of $H$ for $n_i \geq 3$; $V^{(j)}_{SL2}$ is a representation of $H_{SL2}$ for $j=1, 2$ and a product of nondegenerate representations of $SL(2)$; and $V_=$ (resp. $V_{\neq}$) is a representation of $H_=$ (resp. $H_{\neq}$) and a product of nondegenerate representations of $SL(n_i)$. Note that $\rk(H_\neq)$ is at least 2. It follows from Lemma \ref{lem:degeneracy-properties} that
\begin{equation}\label{eq:long1} \degen(V) \leq \degen(V^{(1)} \oplus V^{(2)}) = \degen(V_=) + \degen((V^{(1)}_{SL2} \otimes V_{\neq}) \oplus ( V^{(2)}_{SL2} \otimes V^\vee_{\neq})).
\end{equation}
We claim that the representation
\[
(V^{(1)}_{SL2} \otimes V_{\neq}) \oplus ( V^{(2)}_{SL2} \otimes V^\vee_{\neq})
\]
has a degeneracy of at most 1: this is because the nondegenerate representations of $SL(2)$ all contain the standard representation which is self dual, so we can find a weight $\xi$ of $V^{(1)}_{SL2} \otimes V_{\neq}$ such that $-\xi$ is a weight of $V^{(2)}_{SL2} \otimes V_{\neq}^\vee.$ Then from \eqref{eq:long1}, the fact that $V$ is nondegenerate, and Remark \ref{rmk:bound} it follows that
\[
\rk(H) = \degen(V) \leq \degen(V_=) + 1 \leq \rk(H_=) + 1
\]
contradicting the fact that $\rk(H_\neq) \geq 2$.

Finally we show that (i) holds; i.e., that the $n_i$ are pairwise coprime. For this we can replace $V$ with the nondegenerate representation $V^{(1)}$.
Suppose for contradiction that there exist $i, i'$ with $d := \gcd(n_i,n_{i'})>1$. Without loss of generality we can take $i=1$ and $i'=2$. Because (iii) holds, there exist weights $v_1,...,v_{n_1}$ of $V^{(1)}_1$ and $w_1,...,w_{n_2}$ of $V^{(1)}_2$ such that 
\[(n_2/d)(v_1+...+v_{n_1})=0 \quad \quad \text{and} \quad \quad (n_1/d)(w_1+...+w_{n_2})=0.\]
We rescale and break each sum into $d$ pieces:
\[
\underbrace{\frac{n_2}{d}\bigl(v_1 + \dots + v_{n_1/d}\bigr)}_{\nu_1}
+
\underbrace{\frac{n_2}{d}\bigl(v_{n_1/d+1} + \dots + v_{2n_1/d}\bigr)}_{\nu_2}
+\dots+
\underbrace{\frac{n_2}{d}\bigl(v_{n_1 - n_1/d + 1} + \dots + v_{n_1}\bigr)}_{\nu_d} = 0
\]
\[
\underbrace{\frac{n_1}{d}\bigl(w_1 + \dots + w_{n_2/d}\bigr)}_{\omega_1}
+
\underbrace{\frac{n_1}{d}\bigl(w_{n_2/d+1} + \dots + w_{2n_2/d}\bigr)}_{\omega_2}
+\dots+
\underbrace{\frac{n_1}{d}\bigl(w_{n_2 - n_2/d + 1} + \dots + w_{n_2}\bigr)}_{\omega_d} = 0
\]
Note that each $\nu_i$ has $n_1/d$ summands and each $\omega_j$ has $n_2/d$ summands. Consequently, for any $i$ or $j$, the sum of the coefficients of the vectors comprising $\nu_i$ or $\omega_j$ is $n_1n_2/d^2$. By Proposition \ref{P:box-new} it follows that for each $\ell=1, \ldots, d$ we can obtain $(\nu_\ell, \omega_\ell)$ as a nonnegative rational linear combination of $n_1/d + n_2/d - 1$ of the vectors $(v_i, w_j)$, and that at least one of these coefficients is nonzero. Adding together the expressions for all $(\nu_\ell, \omega_\ell)$ we obtain an expression
\[
\sum_{i, j} c_{ij } (v_i, w_j) = 0
\]
with all $c_{ij} \in \QQ_{\geq 0}$, at least one $c_{ij}$ positive, and at most $n_1+n_2-d$ of the $c_{ij}$ nonzero. It follows that
\[
\degen{(V_1^{(1)} \otimes V_2^{(1)})} \leq n_1 + n_2 - d - 1.
\]
Together with Lemma \ref{lem:degeneracy-properties} and Remark \ref{rmk:bound} this implies
\begin{align*}
\degen(V^{(1)}) &= \degen(V_1^{(1)} \otimes V_2^{(1)}) + \sum_{i=3}^M \degen(V_i^{(1)})\\ 
&\leq \rk(H_1) + \rk( H_2) + \sum_{i=3}^M \rk(H_i) - d + 1 = \rk(H) - d + 1.
\end{align*}
It follows that $V^{(1)})$ cannot be nondegenerate unless $d=1$.\\

\noindent
\textit{Backward direction.} We continue with the proof of Proposition \ref{prop:pieces} by proving the backward direction.
Let $H$ be a group and $V$ be a representation satisfying (i)-(iii). We must show that the degeneracy of $V$ is $1+\sum_{i=1}^M (n_i-1)$. 

We first reduce to the case when $V$ is irreducible. If $H$ does not contain an $SL(2)$ factor, then all irreducible summands of $V$ are isomorphic, and it is enough to show that one of these summands is nondegenerate. If $H$ does contain an $SL(2)$ factor, say $n_1=2$, let $\ell$ be the maximum integer such that $Sym^{2\ell+1}(\CC^2)$ is equal to $V_1^{(i)}$ for some $i=1, \ldots, N$. Then the weights of each irreducible summand of $V$ are contained in the weights of $V^{(i)}$, and it is enough to show that $V^{(i)}$ is nondegenerate.

Thus we may assume $V = V^{(1)}$ is irreducible, writing $V = V_1 \otimes \ldots \otimes V_M$ in this case. We represent the weights of $V$ by tuples of vectors $(\xi_1, \ldots, \xi_M)$ where each $\xi_i$ is a weight of $V_i$. We represent $\xi_i$ as an element of $\mathbb{Z}^{n_i}$ modulo the diagonal. To begin, we will assume that $H$ does not contain an $SL(2)$ factor. After presenting the argument in this case we will explain how it can be modified to accommodate the general case.

If all $n_i \geq 3$, there are $n_i$ choices for the entry $\xi_i$ and hence $n_1n_2 \cdots n_M$ weights of $V$. We index these weights with the positions in an $n_1 \times n_2 \times \ldots \times n_M$ array, writing $K = (k_1, \ldots, k_M)$ for a position and $\xi_K$ for the corresponding weight. We identify the possible values of $k_i$ with the integers in the interval $[1, n_i]$.
A nonnegative integral linear combination $\sum c_K \xi_K$ is an assignment of a coefficient $c_K \in \mathbb{Z}_{\geq 0}$ to each position in the array, and so we define an \textit{assignment of coefficients} to be an integer-valued $n_1 \times \ldots \times n_M$ array. The group $S_{n_i}$ acts on the set of these arrays by permuting the codimension-1 subarrays orthogonal to the $i$-axis, and this induces an action of 
\[\Gamma := S_{n_1} \times S_{n_2} \times \ldots \times S_{n_M}\] on the set of assignments of coefficients. Observe that this group action just amounts to reordering the weights of the representations $V_i$.

Suppose we have an assignment of coefficients $\{c_K\}$. The condition $\sum c_K \xi_K\equiv 0$ (in the quotient lattice) is equivalent to requiring, for each $i=1, \ldots, M$, that the sums of codimension-1 slices of the array orthogonal to the $i$-axis are equal to a constant $a_i$ that is independent of the slice.

\begin{definition} 
An assignment of coefficients with $\sum c_K \xi_K\equiv 0$ and associated constants $\{a_i\}_{i=1}^M$ is \emph{small} if after acting by an element of $\Gamma$, there are integers $m_i \leq n_i$, with some $m_i < n_i$, such that for all $i$, all codimension-1 slices of the subarray $[1, m_1] \times [1, m_2] \times \ldots \times [1, m_M]$ perpendicular to the $i$-axis also add to $a_i$.\footnote{Equivalently, the array is block diagonal with two blocks.}
\end{definition}
That $V$ is nondegenerate is an immediate consequence of the next two Lemmas (\ref{lem:1} and \ref{lem:2}). This will conclude our proof of the backwards direction of Proposition \ref{prop:pieces} in the case that $H$ does not contain an $SL(2)$ factor.

\begin{lemma}\label{lem:1}
A nonzero assignment of coefficients cannot be small.
\end{lemma}
\begin{proof}
This is a consequence of the assumption $\gcd(n_i, n_j)=1$.
Assume we have a nonzero assignment of coefficients with $\sum c_K \xi_K \equiv 0$, so we have for each $i=1, \ldots, M$ a positive $a_i$ such that the sum of any codimension-1 slice orthogonal to the $i$-axis is equal to $a_i$.
Adding the coefficients in the entire array, since there are $n_i$ slices perpendicular to the $i$-axis each of size $a_i$, we see that when we add all the entries in the array we get $a_in_i$. We get equations
\[
a_in_i = a_jn_j\quad \quad \quad \quad \text{for all}\; i, j
\]
and since $(n_i, n_j)=1$ we have $a_i = n_i \ell_{ij}$ and $a_j = n_j \ell_{ij}$ for some $\ell_{ij} \in \ZZ$. If the assignment is small for some choice of integers $m_i \leq n_i$, then likewise we have that $a_im_i$ is independent of $i=1, \ldots, M.$ So given a pair $i, j$ we have $a_im_i = a_jm_j$, and substituting $a_i = n_i\ell_{ij}$ and $a_j = n_j\ell_{ij}$ we find $m_in_i = m_jn_j$. Now $(n_i, n_j)=1$ and $m_i \leq n_i$ imply $m_i=n_i$ for all $i$ showing that the assignment is not small.
\end{proof}


\begin{lemma}\label{lem:2}
If an assignment of coefficients is not small, it has at least $1+\sum_{i=1}^M (n_i-1)$ nonzero entries.
\end{lemma}
\begin{proof}
Fix an assignment of coefficients. Let $S$ be the set of dimension vectors $(m_1, \ldots, m_M)$ such that after action by some element of $\Gamma$, the given assignment of coefficients has at least $1+\sum_{i=1}^M (m_i-1)$ nonzero entries in the subarray
\[
[1,m_1] \times [1,m_2] \times \ldots \times [1,m_M].
\]
 We note that $S$ is nonempty as it contains $(1, \ldots, 1)$ (the array has some nonzero coefficient, and we can permute this to the ``first'' corner). Moreover $S$ has a partial order given by $(m_1, \ldots, m_M) \leq (m_1', \ldots, m_M')$ if $m_i \leq m_i'$ for all $i$. Let $(m_1, \ldots, m_M)$ be a maximal element of $S$.  If (supposing for contradiction) $m_i < n_i$ for some $i$, then since the assignment of coefficients is not small, there is an index $j \in \{1, \ldots, M\}$ and a value $y \in [1, m_j]$ such that the sum
\[
\sum_{x_1 \in [1, m_1]} \ldots\sum_{x_M \in [1, m_M]} c_{(x_1, \ldots, y, \ldots, x_M)}
\neq a_j.\]
In other words, some $c_K \neq 0$ where $k_j=y$ and $k_i \in [1, m_i]$. Apply the element of $\Gamma$ that permutes $y$ and $m_j+1$. The resulting assignment of coefficients now contains a subarray of size $m_1 \times \ldots \times m_j+1 \times \ldots \times m_M$ with at least $2+\sum_{i=1}^M (m_i-1)$ nonzero entries, contradicting maximality of $(m_1, \ldots, m_M)$.
Hence the maximal element of $S$ is $(n_1, \ldots, n_M)$, giving the required bound on the degeneracy of $V$.
\end{proof}

\begin{remark}
The intuition behind the proof of Lemma \ref{lem:2}, when $H = SL(n_1) \times SL(n_2),$ is as follows. In this case the coefficient array has 2 dimensions. We can check if an assignment of coefficients has at least $1+\sum_{i=1}^2(n_i-1)$ nonzero entries by attempting to put the entries along an ``L'' in the upper left corner (the length of a maximal ``L'' is exactly $(n_1-1) + (n_2-1)+1$). Suppose we have built such an ``L'' of a certain size: this means we can rearrange entries of our matrix to have the following form:
\[
\left[
\begin{array}{c|ccc}
\begin{array}{cccc}
 * & * & \cdots & *\\
 * & 0 & \cdots & 0\\
 \vdots & \vdots & \ddots & \vdots \\
 * & 0 & \cdots & 0
\end{array}
&&
A&\\ \hline
&\\
B && C &\\
&&&
\end{array}
\right]
\]
The condition that the matrix is not small is the condition that $A$  and $B$ are not both zero. If for example $A$ is not zero, then we can extend the horizontal part of our ``L.'' (Note that it is not possible in general to actually use $\Gamma$ to permute nonzero entries into this ``L'' shape.)
\end{remark}

To finish the proof of the backwards direction of Proposition \ref{prop:pieces}, it remains to consider the case when $H$ contains an $SL(2)$ factor. We can employ an argument almost identical to the above. Without loss of generality assume $n_1=2$ and let $V_1 = Sym^{2\ell+1}(\CC^2)$. Then there are $2\ell+2$ choices for $\xi_1$, so a coefficient array has size $2(\ell+1) \times n_2 \times \ldots \times n_M$. Divide this array into two $(\ell+1) \times n_2 \times \ldots \times n_M$ blocks such that the weights of $V_1$ corresponding to the first block are dual to the weights of $V_1$ corresponding to the second block. Keep $\Gamma = S_2 \times S_{n_2} \times \ldots \times S_M$  and let $S_2$ act by permuting the two blocks. The condition on $i=1$ for $\sum c_K \xi_K \equiv 0$ is now that for each block, the sum of all entries is $a_1$.
From here, we reindex the 1st axis to be compatible with the blocks: the $2\ell+2$ positions are labeled with the fractions $(\ell+1)^{-1}, 2(\ell+1)^{-1}, \ldots, 2$ with the labels $(\ell+1)^{-1} \ldots 1$ belonging to the first block. Now Lemmas \ref{lem:1} and \ref{lem:2} are proved as before with the conventions that ``a codimension-1 slice orthogonal to the 1st axis'' is either of our two blocks, and that $m_1$ must always be integral. 

This completes the proof of Proposition \ref{prop:pieces}.
\end{proof}

\subsection{Equivalent conditions}
In this section we apply the previous results to completely classify Weyl-generic representations in two special situations. The first is when $G = H \times D$ and $V$ is a ``simple tensor,'' i.e. a tensor product of an $H$-representation with a $D$-representation. Note that by the discussion in Section \ref{sec:necessary} this completely classifies irreducible Weyl-generic representations of any connected complex reductive group $G$.

\begin{corollary}\label{cor:tensor}
Let $G = H \times D$ where $H$ is semisimple and $D$ is a torus, and let $V$ be a representation of the form $V_H \otimes V_D$ where $V_H$ is a representation of $H$ and $V_D$ is a representation of $D$. Then $V$ is Weyl-generic if and only if $V_H$ is nondegenerate and the $D$-weights of $V_D$ span $\Char(D)_\QQ$.
\end{corollary}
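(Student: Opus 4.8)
The plan is to prove the two implications separately. The backward implication will be essentially a restatement of Corollary~\ref{cor:sufficient}, and the forward implication splits according to its two conclusions: the statement about $D$-weights comes straight from Theorem~\ref{thm:necessary}, while nondegeneracy of $V_H$ requires a short argument that feeds a degeneracy witness for $V_H$ together with Carath\'eodory's theorem into Proposition~\ref{P:box-new}. Throughout, assume $V_H$ and $V_D$ are nonzero, so that $V$ is positive-dimensional and $\degen(V_H)$ is defined.

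\emph{Backward direction.} Suppose $V_H$ is nondegenerate and the $D$-weights of $V_D$ span $\Char(D)_\QQ$. First I would record the elementary fact that a finite subset $S$ of a rational vector space spans it if and only if $\Cone(S)$ is full-dimensional: one inclusion is clear, and for the other, choosing a basis out of $S$ the cone contains the image of the positive orthant under the associated coordinate isomorphism, which is open. Hence the cone of $D$-weights of $V_D$ has full dimension, and Corollary~\ref{cor:sufficient}, applied with $X_+ = V_H$ and $Y = V_D$, shows that $V = V_H \otimes V_D$ is Weyl-generic.

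\emph{Forward direction.} Suppose $(V, G)$ is Weyl-generic; write $n = \rk(H)$, $k = \rk(D)$, and fix a maximal torus $T \subseteq H$ so that $T \times D$ is a maximal torus of $G$ with Weyl group $W = W(T, H)$. For the claim about $D$-weights, note that $D$ is the maximal central torus of $G$ and $Z(H)$ is finite, so restriction induces an isomorphism $\Char(Z(G))_\QQ \cong \Char(D)_\QQ$ under which the $Z(G)$-weights of $V$ map onto the $D$-weights of $V$; the latter, as a set, are precisely the $D$-weights of $V_D$ (since $D$ acts on $V = V_H \otimes V_D$ through the second factor), so Theorem~\ref{thm:necessary} says exactly that these span $\Char(D)_\QQ$. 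It remains to prove $V_H$ is nondegenerate, which I would do by contraposition: assume $\degen(V_H) = d \leq n-1$ and show $\Sigma(V, T\times D)^W \subseteq \omega(V, T\times D)$. Let $\theta \in \Sigma(V, T\times D)^W$. By Lemma~\ref{lem:ample-cone} we may write $\theta = (0,\alpha)$ with $\alpha$ in the cone generated by the $D$-weights of $V$, which are exactly the $D$-weights of $V_D$; by Carath\'eodory's theorem (Lemma~\ref{lem:car}) we get $\alpha = \sum_{j=1}^{r} b_j \beta_j$ for $D$-weights $\beta_j$ of $V_D$, positive rationals $b_j$, and $r \leq k$. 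On the other side, let $\{(a_\ell,\xi_\ell)\}_{\ell=1}^{d+1}$ be a realization of $\degen(V_H)$ (Definition~\ref{def:degeni}); by minimality of the degeneracy all $a_\ell$ are positive, each $\xi_\ell$ is a $T$-weight of $V_H$, and $\sum_\ell a_\ell \xi_\ell = 0$. Applying Proposition~\ref{P:box-new} with the data $(0;\ \xi_1,\ldots,\xi_{d+1};\ a_\ell)$ on the first factor and $(\alpha;\ \beta_1,\ldots,\beta_r;\ b_j)$ on the second (the equality of coefficient sums is not needed, since $v=0$ there) yields
\[
(0,\alpha) = \sum_{p=1}^{d+r} c_p\,(\xi_{i_p},\,\beta_{j_p}), \qquad c_p \in \QQ_{\geq 0},
\]
where each $(\xi_{i_p},\beta_{j_p})$ is a $T\times D$-weight of $V$. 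Since $d+r \leq (n-1)+k = \rk(T\times D)-1$, Lemma~\ref{lem:toric} gives $\theta \in \omega(V, T\times D)$. As $\theta$ was arbitrary, $\Sigma(V,T\times D)^W \subseteq \omega(V,T\times D)$, i.e.\ $(V,G)$ is not Weyl-generic.

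\emph{Main obstacle.} The real content of the forward direction is the last step. The naive expansion, replacing each $(0,\beta_j)$ by $\sum_\ell a_\ell(\xi_\ell,\beta_j)$ and summing, would express $(0,\alpha)$ using up to $(d+1)r$ weights of $V$, far exceeding the budget $\rk(T\times D)-1$ imposed by Lemma~\ref{lem:toric}; it is precisely the $(n+m-1)$-term economy of Proposition~\ref{P:box-new} that makes the count close, and recognizing that this proposition is the right tool is the one nontrivial move. Beyond that, only routine care is needed: the edge cases $d=0$ (i.e.\ $0$ is a $T$-weight of $V_H$, where the realization is the single pair $(1,0)$) and $\alpha=0$ (where $\theta=0 \in \omega(V,T\times D)$ trivially) are handled by the same computation.
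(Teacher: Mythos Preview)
Your proof is correct and follows essentially the same strategy as the paper's: the backward direction via Corollary~\ref{cor:sufficient} (the paper cites Theorem~\ref{thm:sufficient} directly), the spanning of $D$-weights via the necessary conditions (the paper cites Lemma~\ref{lem:fullrank} rather than Theorem~\ref{thm:necessary}), and the key step combining a degeneracy witness with Carath\'eodory via Proposition~\ref{P:box-new}. Your bookkeeping is in fact slightly cleaner, since by taking a minimal realization of $\degen(V_H)$ with $d+1\le n$ terms you ensure all coefficients are strictly positive, which is what Proposition~\ref{P:box-new} literally requires; the paper writes the degeneracy with $n$ terms and nonnegative coefficients, tacitly dropping the zero ones before applying the proposition.
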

\begin{proof}
Suppose $V_H$ is nondegenerate and the weights of $V_D$ span $\Char(D)_\QQ$. Then $V$ is Weyl-generic by Theorem \ref{thm:sufficient}. Conversely, if $V$ is Weyl-generic, then the weights of $V_D$ span $D$ by Lemma \ref{lem:fullrank}. Now suppose for contradiction that $V_H$ is not nondegenerate. Let $T$ be the maximal torus and let $n$ be the rank of $H$, and let $k$ be the rank of $D$. Then we can find $T$-weights $\xi_1, \ldots, \xi_n$ of $H$ and numbers $a_i \in \QQ_{\geq 0}$, not all zero, such that 
\begin{equation}\label{eq:cor2}\sum_{i=1}^n a_i \xi_i=0.
\end{equation}
Let $\theta \in \Sigma(V, T \times D)^W$; by Lemma \ref{lem:ample-cone} we can write $\theta = (0, \theta_D)$ where $\theta_D \in \Char(D)_\QQ$ is in the cone of weights of $V_D$. It follows from Carath\'eodory's theorem \ref{lem:car} and Lemma \ref{lem:toric} that for some $b_j \in \QQ_{\geq 0}$ and $\alpha_j$ weights of $V_D$ we can write 
\begin{equation}\label{eq:cor3}
\theta_D = \sum_{j=1}^k b_j \alpha_j.\end{equation}
Finally, combining \eqref{eq:cor2} and \eqref{eq:cor3} and using Proposition \ref{P:box-new} we find that $\theta$ is in a cone generated by at most $n+k-1$ weights of $V$. By Lemma \ref{lem:toric} this means $\theta \in \omega(V, T \times D)$.
\end{proof}
\begin{remark}
Let $V$ be a Weyl-generic representation of $G = H \times D$ with $H$ semisimple and $D$ a torus. It is possible that no irreducible summand of $V$ is Weyl-generic. It is also possible that $V$ is not nondegenerate as an $H$-representation. See Example \ref{ex:quiver}.
\end{remark}

The second situation when we can completely classify Weyl-generic representations is when $G$ is isogeneous to the product of  $SL(2)$ and a torus.

\begin{corollary}
Let $V$ be a representation of $G = SL(2) \times D$. Then $V = \bigoplus_{n=1}^\infty Sym^n(\CC^2) \otimes Y_n$ where $Y_n$ is a representation of $D$ (and only finitely many $Y_n$ are nontrivial). Let $\{\alpha_i^-\}_{i \in I}$ be the set of elements of $\Char(D)_\QQ$ arising as the $D$-weight of $Y_{2m}$ for some $m\geq 0$. Then $V$ is Weyl-generic if and only if
\begin{itemize}
\item[(1)] The $D$-weights of $V$ span $\Char(D)_\QQ$, and
\item[(2)]The cone generated by all the $D$-weights of $V$ is not contained in  $\Cone(\{\alpha^-_i\}_{i \in I})$.
\end{itemize}
\end{corollary}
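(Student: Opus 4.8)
The plan is to compute both sides of the Weyl-genericity condition and compare them; note that $V$ need not be a simple tensor, so Corollary~\ref{cor:tensor} does not apply directly. Write $T$ for a maximal torus of $SL(2)$, so that $T\times D$ is a maximal torus of $G$ and $\Char(T)_\QQ\cong\QQ$ with Weyl group acting by $\pm1$; set $k=\dim D$. Recall that $0$ is an $SL(2)$-weight of $Sym^n(\CC^2)$ exactly when $n$ is even. By Lemma~\ref{lem:ample-cone}.(ii) we have $\Sigma(V,T\times D)^W=0\times\Cone(A)$, where $A$ is the set of $D$-weights of $V$; and $\{\alpha_i^-\}_{i\in I}$ is precisely the set of $D$-weights occurring in the summands $Sym^n(\CC^2)\otimes Y_n$ with $n$ even. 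Since Weyl-genericity means $\Sigma(V,T\times D)^W\not\subseteq\omega(V,T\times D)$, it suffices to understand $\omega(V,T\times D)\cap(0\times\Char(D)_\QQ)$.

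The heart of the proof is the identity
\[
\omega(V,T\times D)\cap\bigl(0\times\Char(D)_\QQ\bigr)=\bigl(0\times\Cone(\{\alpha_i^-\}_{i\in I})\bigr)\cup C,
\]
where $C$ is a finite union of cones, each contained in a $\QQ$-subspace of dimension at most $k-1$. For the inclusion $0\times\Cone(\{\alpha_i^-\})\subseteq\omega$: each $(0,\alpha_i^-)$ is a $T\times D$-weight of $V$ coming from an even summand, and these all lie in the $k$-dimensional space $0\times\Char(D)_\QQ$, so Carath\'eodory's theorem (Lemma~\ref{lem:car}) realizes any point of their cone using at most $k$ of them, i.e.\ as a point of a wall. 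For the reverse inclusion, Lemma~\ref{lem:toric} writes a point of $\omega$ inside a cone $\cW$ generated by at most $k$ weights of $V$: if every generator has zero $T$-component then each is one of the $(0,\alpha_i^-)$ and $\cW\subseteq 0\times\Cone(\{\alpha_i^-\})$, while otherwise $\mathrm{span}(\cW)$ is not contained in $0\times\Char(D)_\QQ$, so $\cW\cap(0\times\Char(D)_\QQ)$ lies in a subspace of dimension at most $k-1$. Since there are finitely many walls, the cones of this last type assemble into the finite union $C$.

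Granting the identity, necessity and sufficiency both follow quickly. If the $D$-weights of $V$ do not span $\Char(D)_\QQ$, then $(V,G)$ is not Weyl-generic by Lemma~\ref{lem:fullrank}, which forces (1); and if $\Cone(A)\subseteq\Cone(\{\alpha_i^-\})$ then $\Sigma(V,T\times D)^W=0\times\Cone(A)\subseteq 0\times\Cone(\{\alpha_i^-\})\subseteq\omega$, again not Weyl-generic, which forces (2). Conversely, assume (1) and (2): then $\Cone(A)$ is full-dimensional and is not contained in the closed cone $\Cone(\{\alpha_i^-\})$, so pushing a point of $\Cone(A)\setminus\Cone(\{\alpha_i^-\})$ slightly toward an interior point of $\Cone(A)$ produces a nonempty open subset $B\subseteq\Char(D)_\QQ$ with $B\subseteq\Cone(A)$ and $B\cap\Cone(\{\alpha_i^-\})=\emptyset$. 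Because $\QQ$ is infinite, $B$ is not covered by the finitely many subspaces of dimension $<k$ comprising $C$, so we may choose $\theta_D\in B\setminus C$; then $(0,\theta_D)\in\Sigma(V,T\times D)^W\setminus\omega(V,T\times D)$, so $(V,G)$ is Weyl-generic.

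The step to watch is the reverse inclusion in the displayed identity: the key point is that a wall can fill a full-dimensional piece of $0\times\Char(D)_\QQ$ only when it is generated entirely by $T$-fixed weights, equivalently by weights pulled from the even summands $Sym^n(\CC^2)\otimes Y_n$; every other wall meets $0\times\Char(D)_\QQ$ in strictly smaller dimension and is therefore negligible. Once this dichotomy is isolated, the rest is Carath\'eodory together with elementary convexity.
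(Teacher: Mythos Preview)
Your proof is correct and follows essentially the same approach as the paper: both hinge on the dichotomy that a wall either has all generators with zero $T$-component (hence lies in $0\times\Cone(\{\alpha_i^-\})$) or meets $0\times\Char(D)_\QQ$ in a subspace of dimension at most $k-1$, and both conclude by noting that the full-dimensional cone $0\times\Cone(A)$ cannot be covered by $0\times\Cone(\{\alpha_i^-\})$ together with finitely many lower-dimensional pieces. Your version packages this dichotomy as an explicit identity for $\omega\cap(0\times\Char(D)_\QQ)$ and spells out the density argument a bit more carefully than the paper does, but the substance is the same.
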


\begin{proof}
Let $k$ be the rank of $D$ and let $T \subseteq SL(2)$ be a maximal torus. We note that by Lemma \ref{lem:toric} a wall in $\omega(V, G)$ is a cone generated by at most $k$ of the $T \times D$ weights of $V$.

If $V$ is Weyl-generic, then (1) holds by Lemma \ref{lem:fullrank}. To prove (2) let $A$ denote the cone generated by all the $D$-weights of $X$ and let $A^- = \Cone(\{\alpha_i^-\}_{i \in I})$.  By Lemma \ref{lem:ample-cone} we have $\Sigma(T \times D)^W = 0 \times A$. On the other hand, we claim $0 \times A^-$ is contained in the walls $\omega(V, G)$: indeed, for any $\alpha \in A^-$ we can write $\alpha =\sum_{j=1}^k a_{i_j} \alpha_{i_j}$ for some $a_{i_j} \in \QQ_{\geq 0}$ (using Carath\'eodory's theorem \ref{lem:car}). Since $0$ is a weight of $Sym^{2m}(\CC^2)$ for all $m \geq 0$ the sum 
\[(0, \alpha) = \sum_{j=1}^k a_{i_j} (0, \alpha_{i_j})\]
expresses $(0, \alpha)$ as a linear combination of at most $k$ of the $T \times D$-weights of $V$. Since $\Sigma(T \times D)^W$ is not contained in the walls, it follows that $A$ is not contained in $A^-$.

Conversely, suppose (1) and (2) hold. 
Let $\cW$ be a torus wall, i.e. a cone generated by $k$ weights $\xi_1, \ldots, \xi_k$ of $T\times D$ (possibly non-distinct). We claim that either $\cW\cap (0\times A)$ spans a linear subspace of dimension $\leq k-1$ or it is contained in $0 \times A^-$; granting this, since $0 \times A$ has dimension $k$ it follows that $V$ is Weyl-generic. As in the proof of Theorem \ref{thm:sufficient}, if the projection of $\cW$ to $\chi(T)_\QQ$ has dimension $1$, then $\cW\cap (0\times A)$ spans a linear subspace of dimension $\leq k-1$ otherwise.
If not, this projection has dimension 0, or in other words the $T$-part of each $\xi_j$ is zero. This means the $T$-part of each $\xi_j$ is a weight of $Sym^{2m}(\CC^2)$ for some $m\geq 0$, and hence the $D$-part of each $\xi_j$ is equal to one of the $\alpha_i^-$. So $\cW$ is contained in $0 \times A^-$.
\end{proof}

\section{Examples}


\begin{example}\label{ex:gr-flop}
This example generalizes the Grassmannian flop introduced in \cite{DS14}. Let $H = \prod_{i=1}^M SL(n_i)$ and let $X$ be a nondegenerate representation of $H$. Let $\CC_a$ denote the 1-dimensional representation of $\Gm$ of weight $a$. Then the self dual representation
\[
V := (X \otimes \CC_a) \oplus (X^\vee \otimes \CC_{-a})
\]
is Weyl generic.

The proof is as follows. Let $n$ be the rank of $H$. It is enough to show that the projection character $H \times \Gm \to \Gm$ is not in any cone generated by $n$ weights of $V$, or equivalently that this character is not in the $\QQ$-linear span of any set of $n$ weights of $X$. So it is enough to prove that, if $\xi_{K_1}, \ldots, \xi_{K_n}$ are weights of $X$ and $c_{K_1}, \ldots, c_{K_n}$ are integers such that $\sum_{i=1}^n c_{K_i}\xi_{K_i}=0$, then $\sum_{i=1}^nc_{K_i}=0$.

This follows from a more careful examination of the proof of the ``backwards direction" in Proposition \ref{prop:pieces}. Indeed, we are considering a generalization of that context where the coefficients $c_K$ are no longer required to be nonnegative, and hence the constant $a_i$ associated to the $i$th axis could be zero. If $a_i \neq 0$ for any $i$, the proofs of Lemmas \ref{lem:1} and \ref{lem:2} hold as before and produce a contradiction.
If some $a_i=0$, then adding all the entries in the array, i.e. summing all the $c_K$'s, we get $0 \cdot n_i = 0$, proving the claim.
\end{example}

\begin{example}\label{ex:quiver}
This example highlights differences between quiver representations and the examples arising from Theorem \ref{thm:sufficient}.

Let $Q$ be a quiver (directed graph) with vertex set $Q_0$ and arrows $Q_1$ and let $s, t: Q_1 \to Q_0$ denote the source and target functions, respectively. A \textit{dimension vector} for $Q$ is a vector $\bd \in \NN^{Q_0}$. A quiver and dimension vector $(Q, \bd)$ defines a representation $(V, G)$, where
\[
G = \left(\prod_{v \in Q_0} GL(\bd_v) \right)/\Gm \quad \quad \quad V = \bigoplus_{a \in Q_1} \Hom(\CC^{\bd_{s(a)}}, \CC^{\bd_{t(a)}}),
\]
the quotient defining $G$ is by the diagonal subgroup, and the action of $G$ on $V$ is given by conjugation. If all $\bd_v$ are at least 2, then the derived subgroup of $G$ is isogeneous to $H=\prod_{v \in Q_0} SL(\bd_v)$. So if $Q$ has at least three vertices and all $\bd_v$ are at least 2 then the underlying $H$-representation is not nondegenerate, nor is any direct summand of it.

Nevertheless many $(V, G)$ arising from quivers with dimension vector \textit{are} Weyl-generic. For example, suppose $Q$ is the quiver with $n$ numbered vertices
\[
\begin{tikzcd}
1 \arrow[r] & 2 \arrow[r] & \ldots \arrow[r] & n-2 \arrow[r] & n-1 \arrow[r, shift left = 1.3ex] \arrow[r, draw=none, "\raisebox{+0ex}{\vdots}" description]\arrow[r, shift right = 1.3ex] & n
\end{tikzcd}
\]
and $e$ arrows from $n-1$ to $n$. Let $\bd \in \NN^n$ be a dimension vector with $d_n=1$ and 
\[d_1 \leq d_2 \leq \ldots \leq d_{n-2} \leq d_{n-1} \leq e.\]
Then the corresponding representation $(V, G)$ is Weyl-generic, and the GIT quotient using the determinant character of $G$ is a partial flag variety.
\end{example}


\begin{remark}\label{rmk:notation}
In the next two examples we represent weights of $SL(4)$ as vectors in $\ZZ^{4}$ modulo the diagonal. The dominant weights are decreasing sequences of integer vectors. We use $e_1, \ldots, e_4$ for the standard basis for $\ZZ^{4}$.
\end{remark}
\begin{example}\label{ex:sufficient}
We give an example of how a lower bound on $t$ in Theorem \ref{thm:sufficient} can be computed explicitly. We use the notation in Remark \ref{rmk:notation}.
Let $G = SL(4) \times \Gm$ and for $t \in \ZZ$ let 
\[V(t) := (V_{e_1} \otimes \CC_1) \oplus (V_{e_1+e_2} \otimes \CC_{-t})\]
where $V_\lambda$ is the irreducible representation of $SL(4)$ corresponding to the dominant weight $\lambda$ and $\CC_a$ is the 1-dimensional representation of $\Gm$ of weight $a$. Note that $V_{e_1}$ is the standard representation of $SL(4)$ and has weights $e_1, e_2, e_3, e_4$, while $V_{e_1+e_2}$ has weights $\{e_i+e_j\}_{1\leq i < j \leq 4} $.  Then $\Sigma(V, T \times \Gm)^W$ is a 1-dimensional rational vector space generated by a vector $(\mathbf{0}; 1) \in \Char(T)_\QQ \times \Char(\Gm)_\QQ$, where $T$ is a maximal torus of $SL(4)$ and $\mathbf{0} \in \Char(T)_\QQ$ is the origin. In the notation of the proof of Theorem \ref{thm:sufficient}, we have that $0 \times A^+$ is the ray generated by $(\mathbf{0}; 1)$, and our goal is to find $t$ such that $(\mathbf{0}; 1)$ is not in $\omega(V, T\times \Gm)^W$.

Up to symmetry, the only wall that could intersect $0 \times A^+$ is the one generated by
\[
(\xi^+_1, \alpha^+_1) = (e_1; 1) \quad \quad (\xi^+_2, \alpha^+_2) = (e_2; 1) \quad \quad (\xi^-_1, t\alpha^-_1) = (e_3+e_4; -t).
\]
The cone $C$ is generated by the ray $(c^+_1, c^+_2, c^-) = (1,1,1)$, and so the expression \eqref{eq:newray} for the ray generating $\sW^W$ becomes
\[
t^{-1}\left((\mathbf{0}; 1) + (\mathbf{0}; 1)\right) + (\mathbf{0}, -1) = (\mathbf{0}, 2t^{-1}-1).
\]
One checks that this ray does not contain $0 \times A^+$ as long as $t$ is at least two.
\end{example}

\begin{example}\label{ex:walls}In Theorem \ref{thm:sufficient}, choosing a sufficiently large $t$ ensures the property of being Weyl-generic, but it does not
control the locations of the walls as the following example shows. We use the notation in Remark \ref{rmk:notation}. 
Let $G = SL(4) \times \Gm^4$ and let $\epsilon_1, \ldots, \epsilon_4$ denote the standard basis of projection characters of $\Char(\Gm^4)_\QQ$. Let $T \subseteq SL(4)$ be a maximal torus. Define
\[X_+ = V_{e_1} \quad \quad \quad \text{and} \quad \quad \quad Y = \CC_{\epsilon_1}\oplus\CC_{\epsilon_2}\oplus\CC_{\epsilon_3}\oplus\CC_{\epsilon_4}.\]

Choose any $\alpha$ in $\Sigma(X_+ \otimes Y, T \times D)^W$. Then $\alpha$ is an element of $0 \times \QQ^{\oplus 4}_{\geq 0} \subseteq \Char(T)_\QQ \times \Char(\Gm^4)_\QQ$ and so can be identified with a sum $\sum_{i=1}^4 \alpha_i \epsilon_i$ for some $\alpha_i \in \QQ_{\geq 0}$. We now show how to choose $Z$ so that $\alpha$ is contained in 
$\omega(X_+\otimes Y\oplus Z^{(t)},T\times D)$ for any integer $t>0$. 

For this purpose, we may scale $\alpha$ and thus assume that $\alpha_i$ are even integers. Let $C$ be an even integer satisfying $C>\alpha_i$ for every $i.$ We define 
\[Z:=V_{e_1+e_2}\otimes\CC_{\frac{1}{2}\sum_{i=1}^4 (\alpha_i-C)\epsilon_i}.\] 
Note that there is a vector $\nu$ as in the statement of Theorem \ref{thm:sufficient} since we ensured $C>\alpha_i$. 
 For any positive integer $t>0,$ we have
        \begin{align*}
(0,\alpha)=C(e_1,\,\epsilon_1)&+C(e_2,\,\epsilon_2)+C(e_3,\,\epsilon_3)+C(e_4,\,\epsilon_4)\\
&+\frac{1}{t}(e_1+e_2,\,\frac{t}{2}\sum_{i=1}^4 (\alpha_i-C)\epsilon_i)
        +\frac{1}{t}(e_3+e_4,\,\frac{t}{2}\sum_{i=1}^4 (\alpha_i-C)\epsilon_i),
        \end{align*}
so $(0,\alpha)$ is inside $\omega(X_+\otimes Y\oplus Z^{(t)},T\times D)$ regardless of $t$.
        \end{example}

\printbibliography

@book {Humphreys,
    AUTHOR = {Humphreys, James E.},
     TITLE = {Linear algebraic groups},
    SERIES = {Graduate Texts in Mathematics},
    VOLUME = {No. 21},
 PUBLISHER = {Springer-Verlag, New York-Heidelberg},
      YEAR = {1975},
     PAGES = {xiv+247},
   MRCLASS = {20GXX (14LXX)},
  MRNUMBER = {396773},
MRREVIEWER = {T.\ Ono},
}

@online{Milne,
AUTHOR = {Milne, J. S.},
TITLE = {Reductive Groups},
URL = {https://www.jmilne.org/math/CourseNotes/RG.pdf},
NOTE = {Accessed July 2025},
}

@online{Conrad,
AUTHOR = {Conrad, Brian},
TITLE = {Reductive Groups Over Fields},
URL = {https://virtualmath1.stanford.edu/~conrad/249BW16Page/handouts/249B_2016.pdf},
NOTE = {Accessed July 2025},
}

@online{Conrad2,
AUTHOR = {Conrad, Brian},
TITLE = {Math 249B. Root datum for split reductive groups},
URL = {https://virtualmath1.stanford.edu/~conrad/249BW16Page/handouts/rootdatum.pdf},
NOTE = {Accessed July 2025},
}

@misc{FPW,
      title={Finding the walls for quiver moduli}, 
      author={Hans Franzen and Gianni Petrella and Rachel Webb},
      year={2025},
      eprint={2506.20568},
      archivePrefix={arXiv},
      primaryClass={math.AG},
      url={https://arxiv.org/abs/2506.20568}, 
}

@book {GIT,
    AUTHOR = {Mumford, D. and Fogarty, J. and Kirwan, F.},
     TITLE = {Geometric invariant theory},
    SERIES = {Ergebnisse der Mathematik und ihrer Grenzgebiete (2) [Results
              in Mathematics and Related Areas (2)]},
    VOLUME = {34},
   EDITION = {Third},
 PUBLISHER = {Springer-Verlag, Berlin},
      YEAR = {1994},
     PAGES = {xiv+292},
      ISBN = {3-540-56963-4},
   MRCLASS = {14D25 (58E05 58F05)},
  MRNUMBER = {1304906},
MRREVIEWER = {Yi\ Hu},
}

@article {Mathieu,
    AUTHOR = {Ballandras, Mathieu},
     TITLE = {Trivializations of moment maps},
   JOURNAL = {Ann. Inst. Fourier (Grenoble)},
  FJOURNAL = {Universit\'e{} de Grenoble. Annales de l'Institut Fourier},
    VOLUME = {74},
      YEAR = {2024},
    NUMBER = {1},
     PAGES = {307--347},
      ISSN = {0373-0956,1777-5310},
   MRCLASS = {14D20 (14L24 16G20 53D20)},
  MRNUMBER = {4748173},
MRREVIEWER = {Graeme\ Wilkin},
       DOI = {10.5802/aif.3587},
       URL = {https://doi.org/10.5802/aif.3587},
}

@article {HL-Sam,
    AUTHOR = {Halpern-Leistner, Daniel and Sam, Steven V.},
     TITLE = {Combinatorial constructions of derived equivalences},
   JOURNAL = {J. Amer. Math. Soc.},
  FJOURNAL = {Journal of the American Mathematical Society},
    VOLUME = {33},
      YEAR = {2020},
    NUMBER = {3},
     PAGES = {735--773},
      ISSN = {0894-0347,1088-6834},
   MRCLASS = {14F08 (14L24 19E08)},
  MRNUMBER = {4127902},
MRREVIEWER = {Alfonso\ Zamora},
       DOI = {10.1090/jams/940},
       URL = {https://doi.org/10.1090/jams/940},
}

@MISC {spice,
    TITLE = {Describing characters of a reductive group in terms of characters of a maximal torus},
    AUTHOR = {LSpice (https://mathoverflow.net/users/2383/lspice)},
    HOWPUBLISHED = {MathOverflow},
    NOTE = {URL:https://mathoverflow.net/q/449906 (version: 2023-06-30)},
    EPRINT = {https://mathoverflow.net/q/449906},
    URL = {https://mathoverflow.net/q/449906}
}

@article {Stembridge,
    AUTHOR = {Stembridge, John R.},
     TITLE = {The partial order of dominant weights},
   JOURNAL = {Adv. Math.},
  FJOURNAL = {Advances in Mathematics},
    VOLUME = {136},
      YEAR = {1998},
    NUMBER = {2},
     PAGES = {340--364},
      ISSN = {0001-8708,1090-2082},
   MRCLASS = {06C05 (03G10 05E15)},
  MRNUMBER = {1626860},
MRREVIEWER = {Daniel\ Ashlock},
       DOI = {10.1006/aima.1998.1736},
       URL = {https://doi.org/10.1006/aima.1998.1736},
}

@article {king,
    AUTHOR = {King, A. D.},
     TITLE = {\href{https://www.math.uni-bielefeld.de/~sek/sem/stability/king.pdf}{Moduli of representations of finite-dimensional algebras}},
   JOURNAL = {Quart. J. Math. Oxford Ser. (2)},
  FJOURNAL = {The Quarterly Journal of Mathematics. Oxford. Second Series},
    VOLUME = {45},
      YEAR = {1994},
    NUMBER = {180},
     PAGES = {515--530},
      ISSN = {0033-5606},
   MRCLASS = {16G10 (14D25)},
  MRNUMBER = {1315461},
       URL = {https://doi.org/10.1093/qmath/45.4.515},
}

@book {Bourbaki,
    AUTHOR = {Bourbaki, N.},
     TITLE = {\'El\'ements de math\'ematique. {F}asc. {XXXIV}. {G}roupes et
              alg\`ebres de {L}ie. {C}hapitre {IV}: {G}roupes de {C}oxeter
              et syst\`emes de {T}its. {C}hapitre {V}: {G}roupes engendr\'es
              par des r\'eflexions. {C}hapitre {VI}: syst\`emes de racines},
    SERIES = {Actualit\'es Scientifiques et Industrielles [Current
              Scientific and Industrial Topics]},
    VOLUME = {No. 1337},
 PUBLISHER = {Hermann, Paris},
      YEAR = {1968},
     PAGES = {288 pp. (loose errata)},
   MRCLASS = {22.50 (17.00)},
  MRNUMBER = {240238},
MRREVIEWER = {G.\ B.\ Seligman},
}

@article {orb-qmaps,
    AUTHOR = {Cheong, Daewoong and Ciocan-Fontanine, Ionu\c{t} and Kim, Bumsig},
     TITLE = {Orbifold quasimap theory},
   JOURNAL = {Math. Ann.},
  FJOURNAL = {Mathematische Annalen},
    VOLUME = {363},
      YEAR = {2015},
    NUMBER = {3-4},
     PAGES = {777--816},
      ISSN = {0025-5831},
   MRCLASS = {14D20 (14D23 14N35)},
  MRNUMBER = {3412343},
MRREVIEWER = {Alfonso Zamora},
       DOI = {10.1007/s00208-015-1186-z},
       URL = {https://doi.org/10.1007/s00208-015-1186-z},
}

@article {DS14,
    AUTHOR = {Donovan, Will and Segal, Ed},
     TITLE = {Window shifts, flop equivalences and {G}rassmannian twists},
   JOURNAL = {Compos. Math.},
  FJOURNAL = {Compositio Mathematica},
    VOLUME = {150},
      YEAR = {2014},
    NUMBER = {6},
     PAGES = {942--978},
      ISSN = {0010-437X,1570-5846},
   MRCLASS = {14F05 (14M15)},
  MRNUMBER = {3223878},
MRREVIEWER = {Nicolas\ Perrin},
       DOI = {10.1112/S0010437X13007641},
       URL = {https://doi.org/10.1112/S0010437X13007641},
}

@misc{stacks-project,
  author       = {The {Stacks project authors}},
  title	       = {The Stacks project},
  howpublished = {\url{https://stacks.math.columbia.edu}},
  year	       = 2025,
}

@article {craw,
    AUTHOR = {Craw, Alastair},
     TITLE = {Quiver flag varieties and multigraded linear series},
   JOURNAL = {Duke Math. J.},
  FJOURNAL = {Duke Mathematical Journal},
    VOLUME = {156},
      YEAR = {2011},
    NUMBER = {3},
     PAGES = {469--500},
      ISSN = {0012-7094,1547-7398},
   MRCLASS = {14M15 (14D22 14F05 16G20)},
  MRNUMBER = {2772068},
MRREVIEWER = {Nicolas\ Perrin},
       DOI = {10.1215/00127094-2010-217},
       URL = {https://doi.org/10.1215/00127094-2010-217},
}

@article {stable-qmaps,
    AUTHOR = {Ciocan-Fontanine, Ionu\c t and Kim, Bumsig and Maulik, Davesh},
     TITLE = {Stable quasimaps to {GIT} quotients},
   JOURNAL = {J. Geom. Phys.},
  FJOURNAL = {Journal of Geometry and Physics},
    VOLUME = {75},
      YEAR = {2014},
     PAGES = {17--47},
      ISSN = {0393-0440},
   MRCLASS = {14D23 (14L24 14M25)},
  MRNUMBER = {3126932},
MRREVIEWER = {Yu-Jong Tzeng},
       URL = {https://doi.org/10.1016/j.geomphys.2013.08.019},
}

@article {toric-qmaps,
    AUTHOR = {Ciocan-Fontanine, Ionu\c t and Kim, Bumsig},
     TITLE = {Moduli stacks of stable toric quasimaps},
   JOURNAL = {Adv. Math.},
  FJOURNAL = {Advances in Mathematics},
    VOLUME = {225},
      YEAR = {2010},
    NUMBER = {6},
     PAGES = {3022--3051},
      ISSN = {0001-8708},
   MRCLASS = {14N35 (14D23 14M25)},
  MRNUMBER = {2729000},
MRREVIEWER = {I. Dolgachev},
       URL = {https://doi.org/10.1016/j.aim.2010.05.023},
}
\end{document}